\newtheorem{theorem}{Theorem}[section]
\newtheorem{lemma}[theorem]{Lemma}
\newtheorem{corollary}[theorem]{Corollary}
\newtheorem{definition}[theorem]{Definition}
\newtheorem{notation}[theorem]{Notation}
\newtheorem{observation}[theorem]{Observation}
\newcommand{\ra}{\rightarrow}
\newcommand\Z{\mathbb Z}
\newcommand\R{\mathbb R}
\newcommand\N{\mathbb N}
\newcommand\Q{\mathbb Q}
\newcommand\p{p}
\newcommand\h{h}
\newcommand\mx{\mathrm{Max}}
\newcommand\mn{\mathrm{Min}}
\newcommand{\seanjen}{MR2052598}
\newcommand{\seanjennotac}{MR2016653}
\newcommand{\hmtcfcn}{MR1804409}
\newcommand{\cannon}{MR877210}
\newcommand{\cfp}{MR1426438}
\newcommand{\BG}{MR752825}
\newcommand{\murraysusan}{MR2126733}
\newcommand{\thiel}{MR1237823}
\newcommand{\ilyamac}{MR1879521}
\newcommand{\poenaruac}{MR1259521}
\newcommand{\poenarugeom}{MR1152227}
\newcommand{\echlpt}{MR1161694}
\newcommand{\belkbux}{MR2139683}
\newcommand{\millershapiro}{MR1644415}
\newcommand{\MTs}{MR1390045}
\newcommand{\riley}{MR1931373}
\newcommand{\horaksteintaback}{MR2596882}
\newcommand{\fordham}{MR1998934}
\newcommand{\guba}{MR2207020}
\title{Tame combing and almost convexity conditions}
\author[Sean Cleary]{Sean Cleary}
\address{Department of Mathematics,
The City College of New York, City University of New York, New
York, NY 10031}
\email{cleary@sci.ccny.cuny.edu}
\author[Susan Hermiller]{Susan Hermiller}
\address{Department of Mathematics, University of Nebraska, Lincoln, NE}
\email{smh@math.unl.edu}
\author[Melanie Stein] {Melanie Stein}
\address{Department of Mathematics, Trinity College, Hartford, CT 06106}
\email{melanie.stein@trincoll.edu}
\author[Jennifer Taback] {Jennifer Taback}
\address{Department of Mathematics, Bowdoin College, Brunswick, ME 04011}
\email{jtaback@bowdoin.edu}
\thanks{The first author acknowledges support from National Science
Foundation grant DMS-0811002.}
\thanks{The fourth author acknowledges support from
National Science Foundation grant DMS-0604645. The third and fourth
authors acknowledge support from a Bowdoin College Faculty Research Award.}
\date{\today}
\begin{document}

\begin{abstract}

We give the first examples of groups which
admit a tame combing with linear radial
tameness function with respect to any choice of finite presentation, but
which are not minimally
almost convex on a standard generating set.
 Namely, we explicitly construct such combings for Thompson's group $F$
and the Baumslag-Solitar groups $BS(1,p)$ with $p \ge 3$.
In order to make this construction for Thompson's group $F$, we
significantly expand the understanding of the Cayley complex of
this group with respect to the standard finite presentation.  In
particular we describe a quasigeodesic set of normal forms and
combinatorially classify the arrangements of 2-cells
adjacent to edges that do not lie on normal form paths.

\end{abstract}

\maketitle

\section{Introduction}
\label{sec:intro}

This paper has two goals: to study the relationships between
the hierarchies of convexity conditions and tame combing
conditions on a Cayley complex corresponding to a given group,
 and to significantly expand the understanding of the Cayley
complex of Thompson's group $F$ with respect to the standard
finite presentation with two generators and two relators.

Several notions of almost convexity for groups
have been developed in geometric group theory,
from the most restrictive property defined by
Cannon \cite{\cannon} to the weakest
notion of minimal almost convexity introduced by Kapovich \cite{\ilyamac}.
For a group $G$ with finite generating set $A$,
almost convexity conditions for different classes of functions measure,
in terms of the given function, how close balls in the Cayley graph
for $(G,A)$ are to being convex sets (see Section~\ref{sub:ac}
for the formal definition).  Results
of Thiel \cite{\thiel} and Elder and Hermiller \cite{\murraysusan},
respectively, show that Cannon's almost convexity and
minimal almost convexity, respectively, are not quasi-isometry
invariants.

Mihalik and Tschantz \cite{\MTs} introduced the notion of
a tame 1-combing of a 2-complex, and in particular of
the Cayley complex of a group presentation, in the context of
studying properties of 3-manifolds.
Hermiller and Meier \cite{\hmtcfcn}
refined the definition
of tame combing to differentiate between types of tameness
functions, analogously to almost convexity conditions. For a group
$G$ with finite presentation ${\mathcal P}$, intuitively the
radial tameness function measures the relationship,
for any loop, between the size of the ball in the
Cayley complex containing the loop and the
size of the ball needed to contain a disk filling in that loop
(see Section~\ref{sub:tc} for the formal definitions).
Hermiller and Meier  \cite{\hmtcfcn} showed that the
advantage of studying balls in a Cayley complex from the viewpoint of
tame combings and radial tameness functions is that the classes of
tame combable groups
are, up to Lipschitz equivalence
of radial tameness functions (for example, linear functions or
exponential functions), invariant under quasi-isometry, and hence under change of presentation.
In the same paper they also showed that
groups which are almost convex with respect to
several classes of functions are contained in the quasi-isometry invariant
class of groups admitting a 1-combing with a linear
radial tameness function.  In Section~\ref{sub:hier}, we
give a more complete discussion of the hierarchies of
almost convexity conditions and
tame combing functions, and their interconnections.

In seeking to further understand
the correspondence
between these two hierarchies,
we use geometric information from the Cayley
complex to construct tame 1-combings with linear
tameness functions for two groups: Thompson's group $F$
and the solvable Baumslag-Solitar group $BS(1,p)$ for $p \geq 3$.
Cleary and Taback \cite{\seanjennotac} showed that Thompson's group $F$ 
is not almost convex with respect to the standard generating set, and 
Belk and Bux \cite{\belkbux} showed that Thompson's
group $F$ is not minimally almost convex with respect to the standard
finite generating set.  Elder and Hermiller \cite{\murraysusan}
showed that the groups $BS(1,p)$ for $p \geq 7$
are also not minimally almost convex with respect to their usual
generating set; moreover, Miller and Shapiro \cite{\millershapiro} showed
shown that the group $BS(1,p)$ does not satisfy
Cannon's almost convexity condition for any generating set.
Combining these
then provides the
first examples of groups
which admit a combing with a linear radial tameness function
(with respect to any choice of finite presentation),
but  which are not minimally almost convex on a particular
finite generating set.
These also provide the first examples of groups
which admit a combing with a linear radial tameness function
but which do not satisfy Cannon's almost convexity condition with
respect to every finite presentation.
In the case of Thompson's group $F$,
our proof also gives significant new insight into the Cayley
complex of this group.

Despite the prevalence of $F$ in geometric group theory, a
detailed understanding of the Cayley complex for the standard
finite presentation
$$
\langle x_0,x_1 |
[x_0x_1^{-1},x_0^{-1}x_1x_0],[x_0x_1^{-1},x_0^{-2}x_1x_0^2] \rangle
$$
has been elusive.
In an intricate analysis, Guba \cite{\guba} 
showed that Thompson's group $F$ has a quadratic isoperimetric
function, but
it is as yet unknown if Thompson's group is automatic,
nor even if it is asynchronously combable. The tame 1-combing we construct for $F$ utilizes the
nested traversal paths defined by Cleary and Taback in \cite{\seanjen}.
We show that these paths yield a set of quasigeodesic normal forms for the group.
Extending these paths to a tame 1-combing of the Cayley
2-complex $X$ for the
presentation above then requires a careful, detailed classification
of the edges and 2-cells of $X$.
In particular
we analyze which edges do not lie on
these normal form paths and for each
such edge,
we characterize which 2-cells of $X$ adjacent to that edge
have the property that their other
boundary edges lie ``closer'' to the identity vertex $\epsilon$.  Our analysis and measure of
closeness to the identity use
combinatorial properties computed from the group elements
labeling the vertices adjacent to that edge.

The paper is organized as follows. In Section 2, we provide an
overview of the development of the notions of convexity and
combings for groups, and the relations between them. In Section 3,
we provide a brief introduction to Thompson's group $F$, and
define the set of normal forms which will be used in the construction
of the tame 1-combing.  We also show that this set of normal
forms satisfies a quasi-geodesic property.
In Section 4, we construct the 1-combing of $F$, and in Section 5 we
show that this combing satisfies a linear radial tameness function, as stated
in Theorem~\ref{thm:flintame}. In Section 6 we show that
$G=BS(1,p)$ with $p \ge 3$ has a 1-combing which satisfies a linear radial tameness
function, proving Theorem~\ref{thm:bsp}. Finally, Section 7 is
devoted to the proof of Theorem~\ref{thm:bscoeff1}, verifying that
although $G=BS(1,p)$ with $p \ge 8$ and the standard presentation admits a 
1-combing with a linear radial tameness
function, this tameness function must have a multiplicative constant greater than 1.

\section{Convexity and combings for groups}
\label{sec:convexityandcombings}

\subsection{Almost convexity conditions on Cayley graphs}\label{sub:ac}

For a group $G$ with a finite inverse-closed generating set $A$, we
let $\Gamma(G,A)$ denote the Cayley graph of $G$ with respect
to $A$, and let $d_A$ denote the word metric with respect to this
generating set.  The
pair $(G,A)$  satisfies the almost convexity condition $AC_{f}$
for a function $f:\N \rightarrow \R_+$ if there is an
$r_0 \in \N$  such that for
every two points $a,b$ in the sphere $S(r)$ (centered at the identity)
with $d_A(a,b) \leq 2$ and every
natural number $r>r_0$, there is a path inside the ball $B(r)$ from
$a$ to $b$ of length no more than $f(r)$.

Every group satisfies the almost convexity condition $AC_f$
for the function $f(r)=2r$, as two
points in the ball of radius $r$ can always be connected by a path
of length $2r$ which remains inside $B(r)$, simply by going to the identity and returning outward.
Thus the weakest nontrivial almost convexity
condition for a pair $(G,A)$  is $AC_f$ for the
function $f(r) = 2r-1$.  Kapovich \cite{\ilyamac} and Riley \cite{\riley}
have shown that this minimal almost convexity condition (MAC)
implies finite presentation of the group and the existence
of an algorithm for constructing the Cayley graph.

At the other end of the spectrum, $(G,A)$ is almost convex (AC) in the sense of
Cannon \cite{\cannon} if it satisfies $AC_f$ for a constant function $f$.
Between the constant
function and $f(r) = 2r-1$, there are a number of other possible functions
which give rise to a range of almost convexity conditions.
For example, Po\'enaru  \cite{\poenarugeom,\poenaruac}
 studied the property $AC_f$
for  sublinear functions $f$.

\subsection{Tame combings of Cayley complexes}\label{sub:tc}

Let $G=\langle A \mid
R \rangle$ be a finitely presented group, with $A$ an inverse-closed generating set.
Let $X$ denote the
Cayley complex corresponding to this presentation, with 0- and
1-skeletons $X^0=G$ and $X^1 =\Gamma(G,A)$; that is, $X^1$ is the
Cayley graph with respect to this presentation.

In order to have a notion of a ball centered at the identity
$\epsilon$ in the 2-complex $X$,
the notion of distance
between the vertices of a Cayley graph is extended
to a notion of level on the entire complex.
The following
definition is equivalent to that in \cite{\hmtcfcn}.

\begin{definition}\label{def:level}
\begin{enumerate}
\item If $g$ is a vertex in $X^0$, the {\em level} $\mathrm{lev}(g)$ is
defined to be the word length $l_A(g)$ with respect to the generating set $A$.
\item If $x\in X^1-X^0$, then $x$ is in the interior of some
edge with vertices $g,h \in X^0$. Then let
$$\mathrm{lev}(x):=\frac{\mathrm{lev}(g)+ \mathrm{lev}(h)}{2} + \frac{1}{4}$$
\item If $x\in X-X^1$,
then $x$ is in the interior of some 2-cell with vertices $g_1,
g_2,  \ldots, g_n$ along the boundary, and
$$\mathrm{lev}(x):=\frac{\mathrm{lev}(g_1) + \mathrm{lev}(g_2)+ \cdots +
\mathrm{lev}(g_n)}{n} + \frac{1}{4} +
\frac{1}{c}$$ where if $R=\{r_1,r_2, \ldots, r_k\}$ is the set of relators, and for each
$1 \leq i \leq k$, $n_i$ is the number of letters in the relator
$r_i$, then $c:=4n_1n_2 \cdots n_k+1$.
\end{enumerate}
\end{definition}

Intuitively, a $0$-combing of a group $G$ with generating set $A$
is a choice of path in the Cayley graph $\Gamma(G,A)$ from the
identity to each group element.  To obtain a $1$-combing for
$(G,\langle A \mid R \rangle)$, a $0$-combing is extended
continuously through the $1$-skeleton of the Cayley complex.
Viewing the ball of radius $q$ in $X$ as the set of  points of
level at most $q$, a radial tameness function $\rho:{\mathbb Q}
\ra {\mathbb R}_+$ for a $1$-combing ensures that once a combing
path leaves the ball of radius $\rho(q)$, it never returns to the
ball of radius $q$.

\begin{definition} The pair $(G,\langle A \mid R \rangle)$
satisfies the tame combing condition $TC_\rho$ for a function
$\rho:{\mathbb Q} \ra {\mathbb R}_+$ if there is a continuous
function $\Psi:X^1 \times [0,1] \ra X$ satisfying:
\begin{enumerate}
\item For all $x \in X^1$, $\Psi(x,0)=\epsilon$ and $\Psi(x,1)=x$,
\item $\Psi(X^0 \times [0,1]) \subseteq X^1$, and \item For all $x
\in X^1$, $0 \le s<t \le 1$, and $q \in {\mathbb Q}$, if
$\mathrm{lev}(\Psi(x,s))>\rho(q)$, then $\mathrm{lev}(\Psi(x,t))>q$.
\end{enumerate}
The function $\Psi$ is a {\em 1-combing} of $X$, and $\rho$ is
a {\em radial tameness function} for $\Psi$.
\end{definition}

A continuous function $\Psi:X^0 \times [0,1] \ra X^1$
with  $\Psi(x,0)=\epsilon$ and $\Psi(x,1)=x$ for all $x \in X^0$
is called a {\it 0-combing} for the pair $(G,A)$.  The restriction
of a 1-combing to the vertices of $X$ is a 0-combing.

In \cite{\hmtcfcn}, Hermiller and Meier show that the condition
$TC_\rho$
is quasi-isometry invariant, and thus is independent of the choice of presentation for the group, up to
a Lipschitz equivalence on the radial tameness functions. Hence it
makes sense to define the class of groups admitting a tame
1-combing with a linear radial
tameness function, and also classes with polynomial and exponential
radial tameness functions.

\subsection{Hierarchies of convexity and combing functions}\label{sub:hier}
A pair $(G,\langle A \mid R \rangle)$ may satisfy a variety of
almost convexity and tame combing
conditions.  In Figure \ref{fig:bigpicture} below, we illustrate
what is known about the relevant relationships between the different
classes of almost convexity functions, and the different classes of
possible radial tameness functions which arise from tame
1-combings. In particular, all descending vertical arrows in
Figure~\ref{fig:bigpicture} follow immediately from
the definitions.

\begin{figure}[ht!]
\begin{center}
\begin{tikzpicture}
\path
node at (0,0) [shape=rectangle,draw] (a2) {Minimally almost convex with $f(n) = 2n-1$}
node at (0,4) [shape=rectangle,draw] (a4) {Poenaru almost convex with $f(n)$ sublinear}
node at (0,8) [shape=rectangle,draw] (a5) {Almost convex with $f(n) =C$}
node at (0,10) [shape=rectangle] (a6) {\textbf{Hierarchy of almost}}
node at (0,9.5) [shape=rectangle] (a6) {\textbf{convexity conditions}}
node at (8,-2) [shape=rectangle,draw,double] (b6) {$\rho(q)$ recursive}
node at (8,0) [shape=rectangle,draw,double] (b5) {$\rho(q)$ exponential}
node at (8,2) [shape=rectangle,draw,double] (b3) {$\rho(q)$ linear}
node at (8,5) [shape=rectangle,draw] (b4) {$\rho(q) = q+C$}
node at (8,8) [shape=rectangle,draw] (b1) {$\rho(q) = q$}
node at (8,10) [shape=rectangle] (b8) {\textbf{Hierarchy of radial }}
node at (8,9.5) [shape=rectangle] (b9) {\textbf{tameness functions}}

node at (5,8.5) [shape=rectangle] (x1) {\cite{\hmtcfcn}}
node at (.9,2) [shape=rectangle] (qq) {\cite{\murraysusan}}
node at (5.25,3.6) [shape=rectangle] (qqq) {\cite{\hmtcfcn},\cite{\riley}}
node at (8.75,3.5) [shape=rectangle] (xxx) {*}
node at (4.2,1.4) [shape=rectangle] (ww) {\cite{\belkbux}, \cite{\murraysusan}, *};

\draw[->,thick,double] (8.25,2.45)--(8.25,4.55);

\draw[->,thick,double] (8,7.55)--(8,5.45);
\draw[->,thick,double] (7.75,4.55)--(7.75,2.45);
\draw[->,thick,double] (8,1.55)--(8,0.45);
\draw[->,thick,double] (8,-0.55)--(8,-1.45);

\draw[<->,thick,double] (2.8,8)--(7,8);

\draw[<-,thick,double] (-.25,.45)--(-.25,3.55);
\draw[<-,thick,double] (.25,3.55)--(.25,.45);
\draw[->,thick,double] (0,7.55)--(0,4.45);
\draw[very thick] (0,1.75)--(.5,2.25);
\draw[very thick] (8,3.25)--(8.5,3.75);

\draw[->,thick,double] (4,3.5)--(6.75,2.5);
\draw[->,thick,double] (6.75,1.75)--(3.75,0.5);
\draw[very thick] (5,1.25)--(5.5,0.95);

\end{tikzpicture}
\caption{The relationships between the hierarchies of convexity
conditions and degrees of radial tameness functions
for a pair $(G,\langle A \mid R \rangle)$.  A slash across an
arrow indicates that it is known that there exists a
counterexample to the implication in that direction.
Numbers in brackets are bibliographic references;
the two instances marked by a $*$ are established in this paper.
The tameness properties contained in double boxes
are independent of the choice of the finite presentation for $G$.}
\label{fig:bigpicture}
\end{center}
\end{figure}
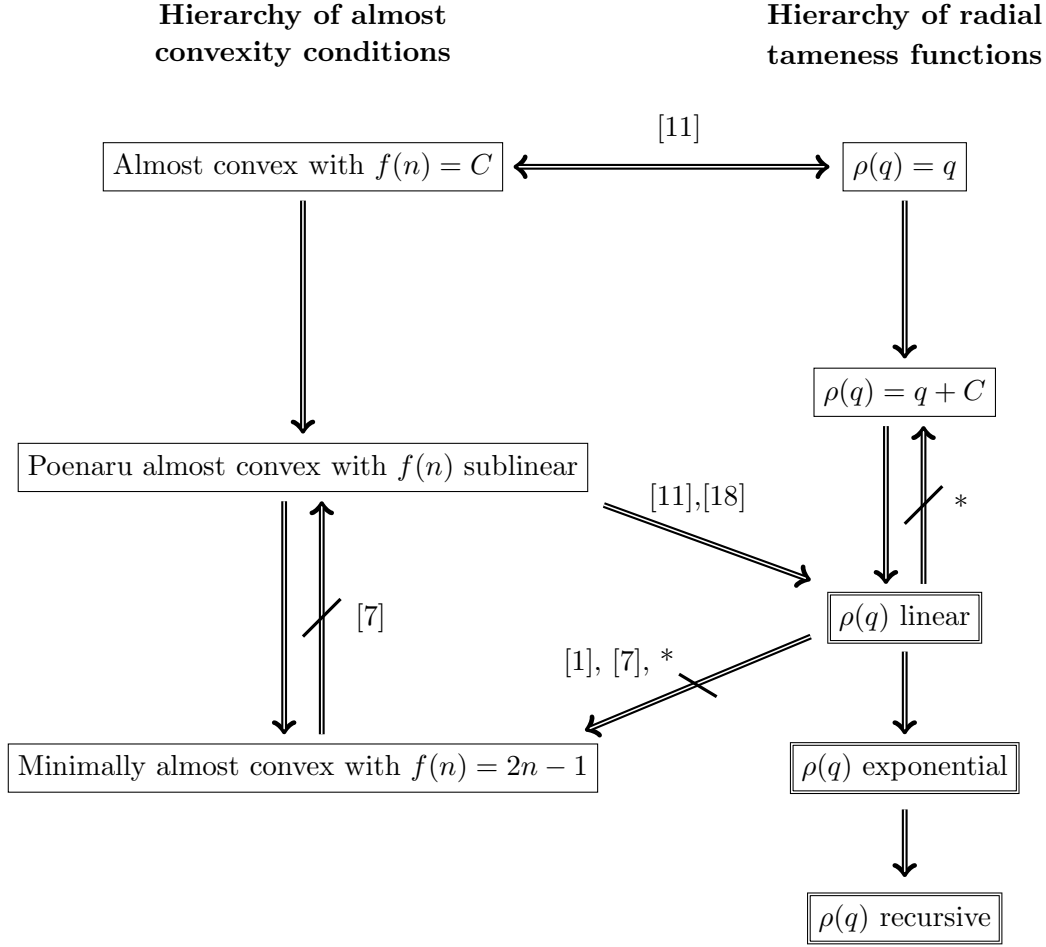

These two chains of conditions are tied together at the base by
the results of Hermiller and Meier \cite{\hmtcfcn} which show
that a pair $(G,A)$ is almost convex if and only if there is a set
of defining relations $R$  such that the pair $(G,\langle A \mid R
\rangle)$ admits a 1-combing satisfying the radial tameness
function $\rho(q)=q$.

In Theorem D of \cite{\hmtcfcn}, Hermiller
and Meier showed that the property $AC_f$ with $f$ sublinear,
 together with a linear isodiametric
function (a combination of properties motivated by
work of Po\'enaru in \cite{\poenarugeom}),
imply the existence of a 1-combing with a linear
radial tameness function.  For a pair $(G,A)$ satisfying $AC_f$ for any function $f:\N \rightarrow \R_+$ such that $f(n) \le n-2$ for all $n$, it follows from Riley \cite[Equation~3.2]{\riley} and induction that for all $n \ge 2r_0+2$, we have $Diam(n) \le n +D$, where $D=Diam(2r_0+2)$ is a constant. Hence the property $AC_f$ with $f$ sublinear implies
a linear isodiametric function, and so this extra assumption
was redundant.  As a consequence, it follows that $AC_f$
with $f$ sublinear
implies the condition $TC_\rho$ with $\rho$ linear.

Tantalizing questions to consider, given these results,
involve the potential connections between weaker
notions of almost convexity and radial tameness functions.
As yet, there are few examples known, other than for groups
satisfying the condition $AC_f$ with $f$ sublinear,
of groups with 1-combings admitting restricted tameness functions.

In this paper the results of Theorems~\ref{thm:flintame} and \ref{thm:bsp}
show that the quasi-isometry independent
class $TC_{\mathrm{linear}}$ of groups with a 1-combing satisfying a linear
radial tameness function contains groups which are not even minimally
almost convex for some particular generating set, giving the
diagonal non-implication in Figure~\ref{fig:bigpicture}.
However, this leaves open the question
of whether every group in $TC_{\mathrm{linear}}$,
and in particular whether $F$ and $BS(1,p)$ with $p \ge 3$, might have
some generating set with respect to which it is minimally
almost convex.

Other intriguing questions involve the possibility
of upward implications in either of the two hierarchies.
In Theorem~\ref{thm:bscoeff1} we show the vertical non-implication
for tameness functions drawn in Figure~\ref{fig:bigpicture}.
For almost convexity, Elder and Hermiller \cite{\murraysusan}
have exhibited a pair $(G,A)$ which is minimally almost convex
but  does not satisfy the condition $AC_f$ with $f$ sublinear.
It is still an open question whether there can be a pair
$(G,A)$ satisfying the Po\'enaru $AC_f$ condition with $f$ sublinear that does not also satisfy Cannon's AC property.


\section{An introduction to Thompson's group $F$}
We present a brief introduction to Thompson's group $F$ and refer
the reader to \cite{\cfp} for a more detailed discussion, with historical background.
In addition, in Section \ref{NTNF's} we define a
the set of normal forms for $F$ which will be used in
our construction of a 1-combing.

Thompson's
group $F$ has a standard infinite presentation:
$$\langle x_k, \ k \geq 0 | x_i^{-1}x_jx_i = x_{j+1} \ \text{ if
}\ i<j \rangle.$$
The elements $x_0$ and $x_1$ are sufficient to
generate the entire group,
 since powers of $x_0$ conjugate $x_1$ to $x_i$ for $i \geq 2$.
Only two relators are required for a presentation with
 the generating set $A:=\{x_0, x_1\}$, resulting in the
finite presentation for $F$:
$$\langle x_0,x_1 |
[x_0x_1^{-1},x_0^{-1}x_1x_0],[x_0x_1^{-1},x_0^{-2}x_1x_0^2]
\rangle.$$
This is the most commonly used finite generating set and presentation
for Thompson's group $F$, and
in this paper we will build the 1-combing for $F$ using the
Cayley complex for this presentation.

With respect to the infinite presentation given above, each element $w \in F$
can be written in normal form as $$w =x_{i_1}^{r_1}
x_{i_2}^{r_2}\ldots x_{i_k}^{r_k} x_{j_l}^{-s_l} \ldots
x_{j_2}^{-s_2} x_{j_1}^{-s_1} $$ with $r_i, s_i >0$, $0 \leq i_1<i_2
\ldots < i_k$ and $0 \leq j_1<j_2 \ldots < j_l$. Furthermore, we require that when both $x_i$ and $x_i^{-1}$
occur, so does $x_{i+1}$ or $x_{i+1}^{-1}$, as discussed by Brown
and Geoghegan \cite{\BG}. We will use the term \emph{infinite
normal form} to mean this normal form, and write
$w=w_pw_n$ where $w_p$ is the maximal subword of this normal form with
positive exponents, and $w_n$ is the maximal subword with negative exponents.

Elements of $F$ can be viewed combinatorially as pairs of finite
binary rooted trees, each with the same number of edges and vertices,
called {\em tree pair diagrams}.
Let $T$ be a finite rooted binary tree.
We define a {\em caret} of $T$ to be a vertex of the
tree together with two downward oriented edges, which we refer to as
the left and right edges of the caret. The {\em right (respectively
left) child} of a caret $c$ is defined to be a caret which is
attached to the right (resp. left) edge of $c$.  If a caret $c$ does
not have a right (resp. left) child, we call the right (resp. left)
leaf of $c$ {\em exposed}.  The caret itself is {\em exposed} if
both of its leaves are also leaves of
the tree; that is, the caret has no children.

For a given tree $T$, let $N(T)$ denote the number of carets in $T$.
We number the carets from $1$ through $N(T)$ in infix order.
The infix ordering is carried out by numbering the left descendants (the left child and all descendants of the left child)
of a caret $c$ before numbering $c$, and the right descendants of $c$
afterward.
We use the
infix numbers as names for the carets, and the statement $p<q$ for
two carets $p$ and $q$ simply expresses the relationship between their
infix numbers. In a tree pair diagram $(T,S)$, we refer to the pair of carets with infix number $p$,
one in each tree, as the {\em caret pair $p$}.

The left (resp. right) side of a binary rooted tree $T$ consists of the left (resp. right) edge of the root caret, together with the left (resp. right) side of the subtree consisting of all left (resp. right) descendants of the root caret.
A caret in a tree $T$ is said to be a {\em right} (resp. {\em left}) caret if
one of its edges lies on the right (resp. left) side of $T$.  The
root caret can be considered either left or right.  All other carets
are called {\em interior} carets.
We also number the leaves of the tree $T$ from left to right, from $0$
through $N(T)$.

An element $w \in F$ is represented by an equivalence class of
tree pair diagrams, among which there is a unique reduced tree
pair diagram.  We say that a pair of trees is {\em unreduced} if,
when the leaves are numbered from $0$ through $N(T)$, there is a
caret in both trees with two exposed leaves bearing the same leaf
numbers.  If so, we remove that pair of carets, and renumber the
carets in both trees. Repeating this process until there are no
such pairs produces the unique {\em reduced} tree pair diagram
representing $w$.

The equivalence of these two interpretations of Thompson's group is
given using the infinite normal form for elements with respect to the
standard infinite presentation, and the concept of leaf exponent. In
a single tree $T$ whose leaves are numbered from left to right
beginning with $0$, the {\em leaf exponent} $E_T(k)$ of leaf number
$k$ is defined to be the integral length of the longest path of left
edges from leaf $k$ which does not reach the right side of the tree.

Given the reduced tree pair diagram $(T,S)$ representing $w \in
F$, compute the leaf exponents $E_T(k)$ for all leaves $k$ in $T$,
numbered $0$ through $n=N(T)=N(S)$.   The negative part of the infinite
normal form for $w$ is then $x_n^{-E_T(n)} x_{n-1}^{-E_T(n-1)} \cdots
x_1^{-E_T(1)} x_0^{-E_T(0)}$. We compute the exponents $E_S(k)$ for the
leaves of the tree $S$ and thus obtain the positive part of the
infinite normal form as $x_0^{E_S(0)} x_{1}^{E_S(1)} \cdots
x_n^{E_S(n)}$.
 Many of these exponents will be $0$, and after deleting these,
we can index the remaining terms to correspond to the infinite
normal form given above, following \cite{\cfp}.  As a result of
this process, we often denote the unique reduced tree pair diagram
for $w$ by $w=(T_-(w),T_+(w))$, since the first tree in the pair
determines the terms in the infinite normal form with negative
exponents, and the second tree determines those terms with
positive exponents. We refer to $T_-(w)$ as the negative tree in
the pair, and $T_+(w)$ as the positive tree.

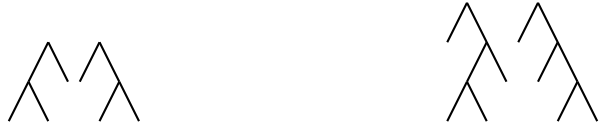
\begin{figure}[ht!]
\begin{center}
\begin{tikzpicture}[scale=.35,thick]
\coordinate
child {child child}
child;
\end{tikzpicture}
\begin{tikzpicture}[scale=.35,thick]
\coordinate
child
child{child child};
\end{tikzpicture}
\hspace{1.5in}
\begin{tikzpicture}[scale=.35,thick]
\coordinate
child
child{child {child child}child};
\end{tikzpicture}
\begin{tikzpicture}[scale=.35,thick]
\coordinate
child
child{child child{child child}};
\end{tikzpicture}
\caption{The reduced tree pair diagrams representing (respectively)
$x_0^{-1}$ and $x_1^{-1}$.}\label{fig:generators}
\end{center}
\end{figure}

Group multiplication is defined as follows when multiplying two
elements represented by tree pair diagrams.  Let $w = (T_-,T_+)$ and
$z = (S_-,S_+)$.  To form the product $wz$, we take unreduced
representatives of both elements, $(T'_-,T'_+)$ and $(S'_-,S'_+)$,
respectively, in which $S'_+ = T'_-$.  The product is then
represented by the (possibly unreduced) pair of trees $(S'_-,T'_+)$.
If the fewest possible carets are added to the tree pairs for
$g$ and $h$ in order to make $S'_+ = T'_-$, and yet the
pair $(S'_-,T'_+)$ is unreduced, we say that a caret must
be {\em removed} to reduce the tree pair diagram for $wz$.

Given any $w=(T_-(w),T_+(w))$ in $F$, let
$N(w):=N(T_-(w))=N(T_+(w))$ denote the number of carets in either
tree of a reduced tree pair diagram representing $w$.  For any
natural number $k$, let $R_k$ (respectively $L_k$) denote the tree
with $k$ right (respectively left) carets, and no other carets; if
$k=0$, $R_0$ (or $L_0$) denotes the empty tree. For $w=w_pw_n$,
where as above $w_p$ and $w_n$ are the positive and negative
subwords of the infinite normal form, the tree pair diagram
$(R_{N(w)},T_+(w))$ represents $w_p(w)$, and $(T_-(w),R_{N(w)})$
represents $w_n$. However, one of these tree pair diagrams may not
be reduced. If the last $k$ carets of $T_-(w)$ (respectively
$T_+(w)$) are all right carets, then at least $k-1$ of them must
be removed in order to produce the reduced tree pair diagram for
$w_n$ (respectively $w_p$). The inverse of $w$ is represented by
the reduced tree pair diagram $w^{-1}=(T_+(w),T_-(w))$.

For a word $y \in A^*=\{ x_0^{\pm 1}, x_1^{\pm
1}\}^*$, let $l(y)$ denote the number of letters in the
word $y$, and
for an element $w \in F$, let $l_A(w)$ be the length of the shortest
word over the generating set $A=\{ x_0^{\pm 1}, x_1^{\pm
1}\}$ that represents $w$. Following the notation of Horak, Stein and Taback
\cite{\horaksteintaback}, the length
$l_A(w)$ can be described in terms of the reduced tree pair
diagram $(T_-(w), T_+(w))$ for $w$, with carets numbered in infix
order. First, we say that caret number $p$ in a tree $T$ has {\em
type N} if caret $p+1$ is an interior caret which lies in the
right subtree of $p$.

\begin{definition} Caret pair $p$ in the reduced tree pair diagram
$(T_-(w),T_+(w))$ is a {\em penalty caret pair} if either
\begin{enumerate}
\item Caret $p$ has type $N$ in either $T_-(w)$ or $T_+(w)$, and is not a
left caret in either tree, or
\item Caret $p$ is a right caret in both $T_-(w)$ and $T_+(w)$ and
caret $p$ is neither the final caret in the tree pair diagram,
nor a left caret in either tree.
\end{enumerate}
\end{definition}

Using this notation, the following lemma is proved in \cite{\horaksteintaback}.

\begin{lemma}\label{lem:penalty}
For $w = (T_-(w),T_+(w))$, the length  $l_A(w)=
l_\infty(w)+2p(w)$, where  $l_{\infty}(w)$ is total number of
carets in both trees of the reduced tree pair diagram which are
not right carets, and $p(w)$ is the number of penalty caret pairs.
\end{lemma}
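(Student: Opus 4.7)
The plan is to establish $l_A(w) = l_\infty(w) + 2p(w)$ by proving matching upper and lower bounds. The starting observation is that the sum of leaf exponents $\sum_k E_T(k)$ over a single tree $T$ equals the number of non-right carets of $T$: indeed, each non-right caret $c$ lies at the top of some maximal left-chain ending at a leaf $k$, and that chain contributes exactly $1$ to $E_T(k)$ per non-right caret in it. Summing over $T_-(w)$ and $T_+(w)$, the total $\sum_i r_i + \sum_j s_j$ of absolute exponents in the infinite normal form equals $l_\infty(w)$. Thus $l_\infty(w)$ is the length of $w$ with respect to the infinite generating set $\{x_k : k \ge 0\}$, and the content of the lemma is that the additional cost of restricting to $A = \{x_0,x_1\}$ is precisely $2p(w)$.

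For the upper bound, the strategy is to write down an explicit word of length $l_\infty(w) + 2p(w)$ by traversing the two trees in infix order and producing, for each non-right caret, one letter of the form $x_0^{\pm 1}$ or $x_1^{\pm 1}$, while accounting for the conjugations needed whenever the index $i$ of the normal-form letter $x_i^{\pm 1}$ exceeds $1$. Using $x_i = x_0^{-(i-1)} x_1 x_0^{i-1}$, most conjugating $x_0$'s telescope between adjacent normal-form letters, and a careful inspection shows that a new pair of conjugating letters is forced exactly when the caret pair fits the definition of a penalty caret pair: either a type~$N$ caret, where the infix successor forces a descent back into the right subtree, or a non-final right caret pair shared by both trees that is blocked from contributing a left caret on the spine. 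Counting one such pair of extra $x_0$'s per penalty gives the $2p(w)$ overhead.

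For the lower bound, the cleanest route is induction on $l_A(w)$, showing that multiplication by any generator $a \in A^{\pm 1}$ satisfies $l_\infty(wa) + 2p(wa) \ge l_\infty(w) + 2p(w) - 1$. This requires a case analysis of how right-multiplication by $x_0^{\pm 1}$ or $x_1^{\pm 1}$ modifies the reduced tree pair: adding, deleting, or rearranging carets on the right spine of $T_+(w)$ (possibly after introducing and then canceling an unreduced caret), and tracking the resulting change in both the count of non-right carets and the penalty count. The main obstacle is this lower bound case analysis, since penalty carets are defined in a somewhat subtle way involving type~$N$ and non-final right carets, and one must verify that the bookkeeping balances in every case where an apparent drop in $l_\infty$ is offset by a compensating rise in $p$ (and conversely). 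Since the statement is due to Horak, Stein and Taback~\cite{\horaksteintaback}, I would cite their detailed verification rather than reproduce the full case-by-case computation here.
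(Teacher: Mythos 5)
The paper does not prove this lemma at all---it simply states that it ``is proved in \cite{\horaksteintaback}''---so your ultimate resolution (deferring to that reference for the case analysis) is exactly what the paper does. Your preliminary sketch is also sound: the identification of $l_\infty(w)$ with the exponent sum of the infinite normal form, the explicit word giving the upper bound, and the lower bound via showing the weight changes by at most $1$ under right multiplication by a generator is precisely the Fordham-style argument carried out in the cited paper.
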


It then follows that $N(w)$ is a good estimate for the $l_A(w)$.
Lemma \ref{lengthisN} makes this relationship precise and is used
in the proof that the tameness function of the combing we
construct below is linear.

\begin{lemma}\label{lengthisN}
For $w \in F$, $N(w)-2 \leq l_A(w) \leq 4N(w)$.
\end{lemma}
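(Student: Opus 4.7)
The plan is to derive both inequalities from Lemma~\ref{lem:penalty}, which states $l_A(w) = l_\infty(w) + 2p(w)$. Let $r_-(w)$ and $r_+(w)$ denote the numbers of right carets in $T_-(w)$ and $T_+(w)$ respectively, so $l_\infty(w) = 2N(w) - r_-(w) - r_+(w)$, and trivially $0 \leq p(w) \leq N(w)$. The upper bound is then immediate: $l_A(w) = l_\infty(w) + 2p(w) \leq 2N(w) + 2N(w) = 4N(w)$.

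The main work is the lower bound, $N(w) - 2 \leq l_A(w)$, which is equivalent to $r_-(w) + r_+(w) \leq N(w) + 2 + 2p(w)$. I would partition the $N(w)$ caret pairs into four classes according to whether each of the two carets in the pair is a right caret in its tree, writing $rr, rl, lr, ll$ for the respective counts. Then $r_- + r_+ = 2rr + rl + lr = N + rr - ll$, so the desired inequality reduces to $rr \leq ll + 2p(w) + 2$, and since $ll \geq 0$ it suffices to prove $rr \leq p(w) + 2$.

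To prove $rr \leq p(w) + 2$, I observe that by part~(2) of the definition of penalty caret pair, every right-right pair is a penalty pair unless either its infix is $N(w)$ (the final caret) or the caret in question is a left caret in one of the two trees. Since the root of a tree is the only caret lying on both its leftmost and rightmost paths, the latter possibility can only occur when the pair's infix equals $k_-$, the infix of the root of $T_-(w)$, or $k_+$, the infix of the root of $T_+(w)$. The key step---and what I expect to be the main technical obstacle---is the observation that every caret on the rightmost path of a binary tree has infix number at least that of the tree's root, since such carets lie in the root's right subtree. Applying this to both $T_\pm(w)$: if $k_- < k_+$, the caret of infix $k_-$ in $T_+(w)$ lies in the left subtree of the root of $T_+(w)$ and thus cannot be a right caret, so the pair of infix $k_-$ is not right-right; the symmetric conclusion holds if $k_+ < k_-$; and if $k_- = k_+$ the two candidate pairs coincide. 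Hence at most two distinct right-right pairs---the final pair of infix $N(w)$ together with at most one of the pairs of infix $k_\pm$---can avoid being penalty pairs, yielding $rr \leq p(w) + 2$ and completing the proof.
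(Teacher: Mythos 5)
Your proof is correct and takes essentially the same route as the paper's: both arguments rest on Lemma~\ref{lem:penalty}, obtain the upper bound immediately from the fact that each caret pair contributes at most $4$ to $l_A(w)$, and obtain the lower bound by observing that a pair contributes $0$ only if it is right--right and not a penalty pair, which by part~(2) of the definition can happen for at most two pairs (the final pair and at most one root pair). Your algebraic bookkeeping with the counts $rr$, $rl$, $lr$, $ll$, and your explicit check that the roots of $T_-(w)$ and $T_+(w)$ cannot both furnish exceptional right--right pairs, simply make precise what the paper's ``at most two caret pairs do not contribute anything'' step leaves implicit.
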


\begin{proof}
Lemma \ref{lem:penalty} shows that each caret pair in the reduced
tree pair diagram for $w$ contributes 0, 1, 2, 3 or 4 to $l_A(w)$,
and the upper bound on $l_A(w)$ follows immediately. The caret pair
can contribute either 0, 1, or 2 to $l_{\infty}(w)$, and can
contribute another 2 if it is a penalty pair. In order for a
caret pair to contribute zero to the word length of the element,
both carets must be on the right side of the tree in order to not
contribute to $l_{\infty}(w)$, and either one is the root (in
which case the pair is not a penalty pair because the root is also
a left caret), or the pair is the last caret pair. So at most two
caret pairs do not contribute anything to $l_A(w)$, which yields the
lower bound on $l_A(w)$.
\end{proof}

Finally, we include here a lemma which will be used in Section \ref{sec:combing} and describes a family of words in $F$
which are always nontrivial.

\begin{lemma}\label{consecutiveindices}
Let $w\in F$, and suppose $w=a_1a_2 \cdots a_k$ where for each
$i$, either $a_i$ or $a_i^{-1}\in X_{\infty}=\{x_0, x_1, x_2,
\ldots \}$. In addition, suppose that for each $i$, if
$a_i=x_r^{\pm1}$, then $a_{i+1}=x_{r+1}^{\pm1}$ or
$x_{r-1}^{\pm1}$. Then $w\neq 1$ in $F$. \end{lemma}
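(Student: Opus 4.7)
The plan is to argue by induction on the maximum index $M = \max_i r_i$ among the letters $a_i = x_{r_i}^{\sigma_i}$ of $w$, where $\sigma_i \in \{+1, -1\}$ denotes the sign of $a_i$. For the base case $M = 0$, the walk condition $|r_{i+1} - r_i| = 1$ forces $k = 1$, and so $w = x_0^{\pm 1} \ne 1$.

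For the inductive step with $M \ge 1$, I would first introduce the shift endomorphism $\iota : F \to F$ defined on the infinite generating set by $\iota(x_n) = x_{n+1}$. One checks that $\iota$ is well-defined, since the defining relations $x_i^{-1} x_j x_i = x_{j+1}$ for $i < j$ are mapped to instances of the same relations with $i+1 < j+1$, and injective, since in the piecewise-linear realization of $F$ it corresponds to conjugation by the affine embedding $[0,1] \hookrightarrow [1/2, 1]$. If $\min_i r_i \ge 1$, then $w = \iota(w')$ where $w' = x_{r_1 - 1}^{\sigma_1} \cdots x_{r_k - 1}^{\sigma_k}$ still satisfies the walk hypothesis but has maximum index $M - 1$; the inductive hypothesis gives $w' \ne 1$, so $w \ne 1$ by injectivity of $\iota$.

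If instead $\min_i r_i = 0$, then $w$ contains at least one letter $x_0^{\pm 1}$. I would then invoke the slope-at-$0$ homomorphism $\phi_0 : F \to \mathbb{Z}$ coming from the PL realization (with $\phi_0(x_0) = -1$ and $\phi_0(x_n) = 0$ for $n \ge 1$), so that $\phi_0(w) = \sum_{i : r_i = 0} \sigma_i$; if this is nonzero we are done. Similarly, using the slope-at-$1$ homomorphism $\phi_1$ (with $\phi_1(x_n) = 1$ for all $n \ge 0$), we get $\phi_1(w) = \sum_i \sigma_i$, and again we are done whenever this is nonzero.

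The main obstacle is the remaining subcase $\phi_0(w) = \phi_1(w) = 0$, where $w$ lies in the commutator subgroup $[F, F]$ and the abelian slope homomorphisms no longer detect nontriviality. For this case I would supplement the argument with a more delicate analysis, either by a direct inspection of the reduced tree pair diagram $(T_-(w), T_+(w))$ showing that the walk hypothesis prevents total cancellation in the Brown--Geoghegan infinite normal form, or by tracing the orbit under $w$ of a suitably chosen dyadic rational (such as $1/2$ or a dyadic point near a breakpoint introduced by the $x_0^{\pm 1}$ letters) and using the $\pm 1$ walk structure around each $x_0^{\pm 1}$ in $w$ to show that this orbit cannot return to its starting point.
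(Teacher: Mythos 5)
There is a genuine gap. Your argument only disposes of the easy cases: shifting all indices down when $x_0^{\pm1}$ does not occur, and detecting nontriviality via the two slope homomorphisms (the abelianization $F\to\Z^2$). The entire difficulty of the lemma lives in the subcase you defer to the last paragraph, where $w$ contains $x_0^{\pm1}$ letters but lies in $[F,F]$ --- for instance $x_0x_1x_0^{-1}x_1^{-1}$ satisfies the walk hypothesis and is killed by both $\phi_0$ and $\phi_1$. For that subcase you offer only two unexecuted directions (``inspection of the reduced tree pair diagram'' or ``tracing the orbit of a dyadic rational''), neither of which is carried out or even reduced to a precise claim, so the proof is incomplete exactly where the real content is.

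Worse, the induction scheme you chose is ill-suited to closing this gap. The natural rewriting moves that handle the hard case --- the ones the paper uses --- are conjugations of the form $x_i^{-1}w_jx_i=w_j'$, which \emph{raise} all indices in $w_j$ by one; these cannot decrease your induction parameter $M=\max_i r_i$. The paper instead inducts on the word length $k$: it uses the PL realization only to conclude that the net exponent of the minimal-index generator $x_i$ in $w$ is zero (essentially your $\phi_0$ observation, applied at the breakpoint $1-(1/2)^i$), writes $w=x_i^{\epsilon_1}w_1x_i^{\epsilon_2}\cdots w_mx_i^{\epsilon_{m+1}}$ with the $w_j$ in higher-index generators, and then in every configuration of the signs $\epsilon_j$ produces, by such a conjugation or by cyclic conjugation by $x_i^{\pm1}$, a word of length $k-2$ that still satisfies the walk hypothesis and still represents the identity, contradicting the inductive hypothesis. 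If you want to salvage your write-up, you should switch to induction on $k$ and supply these length-reducing rewrites; as it stands, the argument does not establish the lemma.
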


\begin{proof}
We prove the lemma by induction on $k$. The base case $k=1$ is
trivial. Suppose $w=a_1 a_2 \cdots a_k$, and the indices of the
generators satisfy the hypothesis of the lemma, and let $i$ be the
smallest index appearing in $w$. We will show that if $w=1$, then
we can obtain a shorter word satisfying the conditions on indices
which is also $1$ in $F$, contradicting the inductive hypothesis.

Utilizing the representation of elements of $F$ as
piecewise linear homeomorphisms of the unit interval
(see \cite{\cfp} for details),
$x_i^{\pm1}$ has a breakpoint at $1-(1/2)^i$, and the right
derivative is $2^{\mp 1}$ at that breakpoint, but $x_j^{\pm1}$ has
support in $[1-(1/2)^j, 1]\subset [1-(1/2)^i, 1]$ for $j \geq i$.
It follows that
the net exponent of all generators $x_i^{\pm1}$ occurring in
$w$ must be zero.
We can write $w=x_i^{\epsilon_1} w_1 x_i^{\epsilon_2} \cdots w_m
x_i^{\epsilon_{m+1}}$, where for each $j$, $w_j$ is a nontrivial word in
generators of the form $x_l$ for $l>i$ and $\epsilon_j \in
\{ 1, -1 \}$, except possibly $\epsilon_1$ and $\epsilon_{m+1}$
which may be zero. Note that if either $\epsilon_1$ or
$\epsilon_{m+1}$ are zero, then necessarily $m \geq 2$, and if
both are zero, then $m \geq 3$. In any case, $m \geq 1$, and for
some pair of indices $r$ and $s$, $\epsilon_r=1$ and
$\epsilon_s=-1$.

Case 1: If for some $j$, $\epsilon_j=-1$ and $\epsilon_{j+1}=1$,
then let $w_j'$ be the word obtained from $w_j$ by increasing the
index of each generator by 1. Then
$w_j'=x_i^{-1}w_jx_i=x_i^{\epsilon_j}w_jx_i^{\epsilon_{j+1}}$ in
$F$. Furthermore, as $i$ is the minimal index in the word $w$, we know that
$w_j'$ begins and ends with
$x_{i+2}^{\pm1}$, $w_{j-1}$ ends in $x_{i+1}^{\pm1}$
(or does not exist if $j=1$), and
$w_{j+1}$ begins in $x_{i+1}^{\pm1}$ (or does not exist
if $j=m$), and so replacing
$x_i^{\epsilon_j}w_jx_i^{\epsilon_{j+1}}$ by $w_j'$ produces a
word of length $k-2$ satisfying the hypotheses of the lemma.

Case 2: If no such index $j$ exists, and
neither $\epsilon_1$ nor $\epsilon_{m+1}$ is zero, then $w$ begins with
$x_i$ and ends with $x_i^{-1}$. Therefore, since $w=1$ in $F$,
$x_i^{-1}wx_i=1$ as well, and if $w'$ is the word of length $k-2$
obtained from $w$ by deleting the first and last letters, then
$w'=x_i^{-1}wx_i=1$ in $F$, and $w'$ satisfies the hypotheses of
the lemma.

Case 3: If no such index $j$ exists, and $\epsilon_1=0$, then $m
\geq 2$ and $\epsilon_2=1$. Let $w_1'$ be the word $w_1$ with the
index of each generator increased by one. Then since $w_1'$ ends
in $x_{i+2}^{\pm1}$ and $w_2$ begins in $x_{i+1}^{\pm1}$, then
replacing $w_1x_i$ by $x_i w_1'$ results in a word of the same
length which still satisfies the hypotheses. Either this new word
satisfies the conditions of Case 2, or else it does not end in
$x_i^{-1}$. But if not, then $\epsilon_{m+1}=0$, and one can do a
similar substitution at that end to obtain a new word ending in
$x_i^{-1}$ and beginning in $x_i$ which satisfies the conditions
of Case 2. Applying the argument in Case 2 to this new word yields
a word of length $k-2$ satisfying the hypotheses of the lemma.
\end{proof}


\subsection{Nested traversal normal forms}\label{NTNF's}

In general, there are many minimal length representatives of
elements of F with respect to the standard finite generating set, and
Fordham \cite{\fordham} described effective methods for finding all such minimal
length representatives.
Cleary and Taback \cite{\seanjen} described a straightforward procedure which
canonically produces a minimal length element
(with respect to the generating set
$A=\{x_0^{\pm 1},x_1^{\pm 1}\}$)
for a purely positive or
purely negative element in
$F$; that is, an element $w$ whose infinite normal form $w_pw_n$
satisfies $w=w_p$
(hence contains only terms
with positive exponents) or $w=w_n$
(hence contains only terms with negative exponents).
They call these paths {\em
nested traversal paths} due to their construction.  The combing
paths used below will be built from concatenating and then freely reducing these nested
traversal paths.

Let $w \in F$ be a strictly negative element; that is,
$w=w_n$, and $w$ is represented
by a reduced tree pair diagram of the form $(T_-(w), R_{N(w)})$, where
$R_{N(w)}$
is a tree consisting only of $N(T_-(w))$ right carets.
To construct the nested
traversal path corresponding to $w$, we proceed as follows. We number
the carets of the tree
$T_-(w)$ in infix order, beginning with $1$. We proceed through the
carets in infix order, adding generators $x_0$, $x_0^{-1}$,  and
$x_1^{- 1}$ to the right end of the nested traversal path
at each step according to the following rules.
\begin{enumerate}
\item If the infix number of the caret is $1$, add nothing to
the nested traversal path.
\item If the caret is a left caret with infix number greater
than $1$, add $x_0^{-1}$ to the nested traversal path.
\item If the caret is an interior caret, let $T$ be the right
subtree of the caret.  If $T$ is nonempty, add
$x_0^{-1} \gamma_T x_0 x_1^{-1}$ to the nested traversal path,
where $\gamma_T$ is the nested traversal path obtained by
following these rules for the carets of $T$.
\item If the caret is an interior caret and the right subtree
of $T$ is empty, then add $x_1^{-1}$ to the nested traversal path.
\item If the caret is a right non-root caret, and its right subtree $T$
contains an interior caret, add $x_0^{-1} \gamma_T x_0$ to the
nested traversal path, where $\gamma_T$ is as above.
\item If the caret is a right non-root caret, and its right subtree $T$
contains no interior carets, then add nothing to the nested
traversal path.
\end{enumerate}
It is proved in \cite{\seanjen} that this method produces a
minimal length word representing a negative element $w_n$ of $F$, with
respect to the generating set $\{x_0,x_1\}$.  We denote this
nested traversal path for $w_n$ by $\eta(w_n)$. 
For a reduced tree pair diagram, the situation in rule (6) above never arises. 
Including it allows one to extend the algorithm to tree pair diagrams obtained 
by appending only right carets to both the last leaf in the tree $T_-(w)$ and 
to the last leaf of $R_{N(w)}$ without changing the word produced by the 
algorithm.

We define the {\em nested traversal normal form} $\eta(w)$ of an element $w
\in F$ as follows.  Let $w = w_pw_n$ be the infinite normal form for $w$.
Then the element $w_p^{-1}$, represented by the (not necessarily
reduced) tree pair diagram $(T_+(w),R_{N(w)})$, is strictly
negative, and so has a nested traversal path formed according to the above rules,
which is not affected by the possible reduction of the diagram,
according to rule (6) of the procedure above.
Hence we can define
the nested traversal normal form for $w_p$ to be
$\eta(w_p):=(\eta(w_p^{-1}))^{-1}$. It follows from the nested traversal construction that the words $\eta(w_p)$ and $\eta(w_n)$ are freely reduced, considered separately. However, their concatenation $\eta(w_p)\eta(w_n)$ may not be, so we define $\eta(w)$, the nested traversal
normal form for $w$, to be the result of freely reducing the word $ \eta(w_p)\eta(w_n)$.
Note that $\eta(w)$ is not necessarily a minimal length word
representing the element $w$.

Cleary and Taback show in the proof of Theorem 6.1 of
\cite{\seanjen} that along a strictly negative nested traversal normal form
$\eta(w_n)=a_1 a_2 \dots a_n$, the number of carets in the tree pair diagrams
corresponding to the prefixes $a_1 a_2 \dots a_i$ for $i \in \{1,2, \cdots ,n \}$
never decreases, that is, $N(a_1 a_2 \cdots a_i) \leq N(a_1 a_2 \cdots a_{i+1})$.
This follows from the construction of the paths: the multiplication $(a_1 a_2 \dots a_i) \cdot a_{i+1}$
never causes a reduction of carets. We prove below that the same holds for the general nested traversal normal form $\eta(w)$.
Our proof of
the tameness of the 0-combing
given by the nested traversal normal forms uses this
property combined with the relationship between $l_A(w)$ and $N(w)$
described in Lemma \ref{lengthisN}.

\begin{theorem}\label{Nincreases}
For $w \in F$, if $\eta(w)=a_1 a_2 \dots a_p$, then $N(a_1 a_2 \cdots a_{i-1}) \leq N(a_1 a_2 \cdots a_{i})$ for all $1 \leq i \leq p$.
\end{theorem}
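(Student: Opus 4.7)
The plan is to deduce the theorem from the monotonicity result of Cleary--Taback for strictly negative nested traversal paths, proved in Theorem~6.1 of \cite{\seanjen}, by analyzing the two segments of $\eta(w) = \eta(w_p)\eta(w_n)$ separately and verifying the inequality across the junction between them. As a preliminary reduction, I would observe that free reduction of a word preserves the non-decreasing property of $N$ along prefixes: when a cancelling pair $a_i a_{i+1}$ is removed, the resulting sequence of prefix elements (viewed in $F$) is a subsequence of the original prefix sequence, and any subsequence of a non-decreasing sequence is non-decreasing. Hence it suffices to prove the statement for the unreduced concatenation $\eta(w_p)\eta(w_n)$.

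For the $\eta(w_p)$ segment, write $\eta(w_p^{-1}) = b_1 \cdots b_k$, so that $\eta(w_p) = b_k^{-1} \cdots b_1^{-1}$; the length-$j$ prefix then equals $w_p \cdot u_{k-j}$, where $u_i := b_1 \cdots b_i$. Using the identity $N(g) = N(g^{-1})$, the desired non-decreasing claim becomes the assertion that $N(w_p \cdot u_i)$ is non-increasing in $i \in \{0, \ldots, k\}$. I would prove this by a mirror of the Cleary--Taback argument: since $u_i$'s negative tree is a partial build of $T_-(w_p^{-1}) = T_+(w_p)$, the product $w_p \cdot u_i$ admits a (possibly unreduced) tree pair diagram whose positive tree retains only the ``unbuilt remainder'' of $T_+(w_p)$. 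Right-multiplying by $b_{i+1}$, classified by which of the rules (2)--(6) generated it when constructing $\eta(w_p^{-1})$, then either preserves $N$ or decreases it by one, exactly mirroring the original caret-by-caret analysis but in reverse.

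For the $w_p\eta(w_n)$ segment, I would prove by induction on $j$ that for $\eta(w_n) = d_1 \cdots d_n$, the element $w_p \cdot d_1 \cdots d_j$ has a (possibly unreduced) tree pair diagram whose positive tree agrees with $T_+(w)$ up to appended right carets, and whose negative tree is precisely the partial build of $T_-(w)$ produced by the nested traversal procedure for $w_n$ after processing its first $j$ carets. The base case $j=0$ matches the tree pair of $w_p$, which realizes the ``empty build'' of $T_-(w)$, and in the inductive step each letter $d_{j+1}$ either adds a new caret to the partial negative tree or reorganizes existing carets without removing any, following the rule-by-rule analysis underlying the original Cleary--Taback proof. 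Consequently $N(w_p \cdot d_1 \cdots d_j) \leq N(w_p \cdot d_1 \cdots d_{j+1})$ for all $j$, and the junction inequality $N(w_p) \leq N(w_p \cdot d_1)$ appears as the case $j = 0$.

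The hard part will be this last step. Cleary--Taback established monotonicity of $N$ for the path starting at the identity $\epsilon$, whereas here we require the analogous behavior when that path is left-translated by $w_p$. The technical difficulty lies in tracking possible caret reductions at the interface between the ``$w_p$-portion'' and the ``partial $w_n$-portion'' of the tree pair diagram of $w_p \cdot d_1 \cdots d_j$, and in verifying that the nested traversal rules, originally designed to build $T_-(w_n)$ starting from the trivial element, continue to produce the correct new carets in this left-translated setting. A careful case analysis on the rule generating each $d_{j+1}$, together with a direct description of the common refinement used in tree pair multiplication, resolves this obstacle.
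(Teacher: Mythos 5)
Your overall architecture --- reduce to the unreduced concatenation $\eta(w_p)\eta(w_n)$ (the subsequence observation there is fine, since only the central $x_0^nx_0^{-n}$ cancels), then handle the two segments separately --- is reasonable, but the two claims that carry all the weight are only asserted, and neither follows from the cited Cleary--Taback result in the way you suggest. For the positive segment, your identity $N(\hbox{prefix}_j\hbox{ of }\eta(w_p))=N(w_p u_{k-j})=N(b_{k-j+1}\cdots b_k)$ converts the problem into monotonicity of $N$ along \emph{suffixes} of the strictly negative path $\eta(w_p^{-1})$. A suffix of a nested traversal path is generally not a nested traversal path (it can violate the non-positive $x_0$-exponent-sum condition on prefixes), so the Cleary--Taback prefix argument does not ``mirror'' formally; and your description of the intermediate tree pair is not right as stated: the unreduced diagram for $w_p\cdot u_i$ is $(S_i',\,T_+(w_p)')$, whose positive tree is \emph{all} of $T_+(w_p)$ padded with right carets, not the ``unbuilt remainder''; getting to the remainder requires analyzing which exposed caret pairs actually reduce, which is exactly the nontrivial content. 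For the negative segment you candidly flag the left-translation issue as ``the hard part'' and then declare that a case analysis ``resolves this obstacle'' without exhibiting it. Since the theorem is essentially equivalent to these two monotonicity claims, the proposal defers rather than supplies the proof.

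For comparison, the paper's argument avoids both translations/reversals and works directly on prefixes of $\eta(w)$, using two mechanisms you don't have. For letters $x_1^{\pm1}$ it counts interior carets: each multiplication by $x_1^{\pm1}$ either raises the interior-caret count of the appropriate tree by one (in which case $N$ does not drop) or lowers it, and since $\eta(w_p)\eta(w_n)$ contains exactly as many $x_1$'s as $T_+(w)$ has interior carets followed by exactly as many $x_1^{-1}$'s as $T_-(w)$ has, every such step must be of the raising type. For letters $x_0^{\pm1}$ it uses six structural facts (e.g., if $N(ux_1)\geq N(u)$ then the root of $T_-(ux_1)$ has a right child) so that the inductive hypothesis at step $i-1$ rules out the configurations under which multiplication by $a_i=x_0^{\pm1}$ could delete a caret. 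If you want to complete your plan, you would in effect have to reprove these facts in the reversed and left-translated settings; it is cleaner to adopt the direct prefix analysis.
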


\begin{proof}
We first claim that for any $u \in F$ and generator $a \in \{ x_0^{\pm 1}, x_1^{\pm 1}\}$,
carets cannot be both added and removed in the process of multiplying $ua$.
We check one case of this for the reader, in which
$a=x_1$ and the right child of the root caret in $T_-(u)$ exists and has an exposed left leaf labeled $n$.
In this case it is necessary to add a single caret to leaf $n$ of both $T_-(u)$ and $T_+(u)$, which has exposed
leaves numbered $n$ and $n+1$.  Before reduction of carets, we obtain a possibly unreduced tree pair diagram $(T_-',T_+')$ in which
 leaves $n$ and $n+1$ of $T_-'$
no longer form a caret.  Any exposed caret with leaves numbered greater than $n+1$ in $(T_-(u),T_+(u))$ has
its leaf labels increased by $1$ in $(T_-',T_+')$.  Thus if a caret pair is exposed in $(T_-',T_+')$, it would have been exposed in
$(T_-(u),T_+(u))$.  However, $(T_-(u),T_+(u))$ was reduced, and hence $(T_-',T_+')$ is reduced as well, and so equals $(T_-(ux_1),T_+(ux_1))$.  Other cases are checked similarly.

Note that $N(ua)$ and $N(u)$ can be related in one of the following three ways. Either,
\begin{itemize}
\item $N(ua) > N(u)$ if carets must be added in order to perform the multiplication, or
\item $N(ua)=N(u)$ if no carets must be added in order to perform the multiplication and no carets must be removed in order to reduce the resulting tree pair diagram, or
\item $N(ua)< N(u)$ if no carets must be added in order to perform the multiplication, but carets must be removed to reduce the resulting tree pair diagram.
\end{itemize}

We remark that in any case, $N(ua)$ and $N(u)$ differ by at most 1 when 
$a=x_0^{\pm 1}$ and by at most 2 when $a=x_1^{\pm 1}$.
In addition to the possible change in the number of carets, the resulting 
trees are rearranged slightly, in very constrained ways.

We will first prove the theorem in the case where $a_i=x_1^{\pm1}$.
Observe that for any $u \in F$, either
\begin{itemize}
\item $T_+(ux_1)$ has one more interior caret than $T_+(u)$, both $T_-(ux_1)$ and $T_-(u)$ have the same number of interior carets, and $N(ux_1) > N(u)$, or
\item $T_-(ux_1)$ has one fewer interior caret than $T_-(u)$, both $T_+(ux_1)$ and $T_+(u)$ have the same number of interior carets, and $N(ux_1) \leq N(u)$.
    \end{itemize}
To see this, note that
if carets must be added in order to perform the multiplication $ux_1$; that is,
if either the root of $T_-(u)$ does not have a right child, or
this root does have a right child but this  child does not have a left child, then
$T_+(ux_1)$ has one more interior caret than does $T_+(u)$.  In that case, $T_-(ux_1)$
is the tree $T_-(u)$ with one or two right carets added to the last leaf, and the
number of interior carets in the negative tree is preserved.  Also as noted above,
no carets can be removed, and hence $N(ux_1)>N(u)$.  On the other hand, if no carets are
added in performing 
the multiplication $ux_1$, then $N(ux_1) \leq N(u)$.
Moreover, during the multiplication process,
the left child of the right child of the root of $T_-(u)$ is an interior
caret, but in the tree $T_-(ux_1)$, either this caret has been
removed in the multiplication process, or the caret with this same number
is a right caret, and hence either way, the number of interior carets in
$T_-(ux_1)$ is strictly less than that of $T_-(u)$.
If carets are removed, they must be the final one or two carets of the tree
pair, and these must be right carets in the tree $T_+(u)$, so the number
of interior carets in the positive tree is left unchanged.
Similarly, one can verify that either
\begin{itemize}
\item $T_-(ux_1^{-1})$ has one more interior caret than $T_-(u)$, both $T_+(ux_1^{-1})$ and $T_+(u)$ have the same number of interior carets, and $N(ux_1^{-1}) \geq N(u)$, or
\item $T_+(ux_1^{-1})$ has one fewer interior caret than $T_+(u)$, both $T_-(ux_1^{-1})$ and $T_-(u)$ have the same number of interior carets, and $N(ux_1^{-1}) < N(u)$.
\end{itemize}

Furthermore, if $a=x_0^{\pm1}$, then $T_-(ua)$ and $T_-(u)$ have the same number of interior carets, as do $T_+(ua)$ and $T_+(u)$.

Now in the (possibly not freely reduced) word $\eta(w_p)\eta(w_n)$, all occurrences 
of $x_1$ precede all occurrences of $x_1^{-1}$. Furthermore, if $T_-(w)$ has $r$ 
interior carets and $T_+(w)$ has $s$ interior carets, then by construction, 
$\eta(w_p)\eta(w_n)$ has
precisely $s$ occurrences of $x_1$ followed by $r$ occurrences of $x_1^{-1}$. 
Thus in any prefix of $\eta(w)$, if $a_i = x_1^{\pm 1}$ it is always true that 
the number of interior carets in the tree pair diagram corresponding to 
$a_1a_2 \cdots a_i$ is one more than the number of interior carets in 
the tree pair diagram corresponding to $a_1a_2 \cdots a_{i-1}$.

Hence, it follows from the previous observations that if 
$ux_1$ is a prefix of $\eta(w_p)$, then $T_+(ux_1)$ has one 
more interior caret than $T_+(u)$, and $N(ux_1)> N(u)$. 
Similarly, if $ux_1^{-1}$ is a prefix of $\eta(w_n)$, then 
$T_-(\eta(w_p)ux_1^{-1})$ has one more interior caret than 
$T_-(\eta(w_p)u)$, and $N(\eta(w_p)ux_1^{-1})\geq N(\eta(w_p)u)$, 
which proves the theorem in the case $a_i=x_1^{\pm1}$. 

In addition, 
we can conclude from this analysis a few more facts about the 
relationship between the reduced word $\eta(w)$ and the 
potentially longer word $\eta(w_p)\eta(w_n)$, which we note here
for use again later. In particular, 
$\eta(w)=w_1w_2$, where $\eta(w_p)=w_1x_0^n$ and 
$\eta(w_n)=x_0^{-n}w_2$ for some $n \geq 0$, and if 
$w_1$ and $w_2$ are both nonempty words, then either 
$w_1$ ends in $x_1$ or $w_2$ begins with $x_1^{-1}$, but not both. 
Moreover, for any prefix
$a_1 \cdots a_i$ of $w_1$, $T_-(a_1 \cdots a_i)$ contains no interior carets.

To prove the theorem for the cases where $a_i=x_0^{\pm1}$, we repeatedly refer to the following six facts, each of which can be deduced by carefully following the process of multiplying by a generator.

\begin{enumerate}
\item $N(ux_0^{-1}) < N(u)$ if and only if the first caret of $T_+(u)$ is exposed, and the first two right carets of $T_-(u)$ have no left children.
\item $N(ux_0) < N(u)$ if and only if the last caret of $T_+(u)$ is exposed, and the last two left carets of $T_-(u)$ have no right children.
\item If $ N(ux_0^{-1}) \geq N(u)$, then the root caret of $T_-(ux_0^{-1})$ has a left child.

\item If $ N(ux_0) \geq N(u)$, then the root caret of $T_-(ux_0)$ has a right child.

\item If $ N(ux_1^{-1}) \geq N(u)$, then the second right caret of $T_-(ux_1^{-1})$ has a left child, so in particular the root caret has a right child.

\item If $ N(ux_1) \geq N(u)$, then the root caret of $T_-(ux_1)$ has a right child.
\end{enumerate}

We proceed by induction to prove the theorem. Clearly $N(a_1) > N(\epsilon)$, so now assume that $N(a_1 \cdots a_k) \geq N(a_1 \cdots a_{k-1})$ for all $k<i$.

Case 1: $a_i=x_0$.

Either $a_{i-1}=x_0$,  $a_{i-1}=x_1^{-1}$, or $a_{i-1}=x_1$. But then, since by the inductive hypothesis $N(a_1 \cdots a_{i-1}) \geq N(a_1 \cdots a_{i-2})$, facts 4, 5, and 6 show that the root caret of $T_-(a_1 \cdots a_{i-1})$ always has a right child. Hence, fact 2 above implies that $N(a_1 \cdots a_i) \geq N(a_1 \cdots a_{i-1})$.

Case 2: $a_i=x_0^{-1}$.

If $a_{i-1}=x_0^{-1}$ or  $a_{i-1}=x_1^{-1}$, then since by the inductive hypothesis $N(a_1 \cdots a_{i-1}) \geq N(a_1 \cdots a_{i-2})$, facts 3 and 5 above show that either the root caret or the second right caret of $T_-(a_1 \cdots a_{i-1})$  has a left child. Hence, in these cases fact 1 above implies that $N(a_1 \cdots a_i) \geq N(a_1 \cdots a_{i-1})$. So we must check the one remaining possibility, that $a_{i-1}=x_1$. We claim that for $a_{i-1}=x_1$, either the root caret of $T_-(a_1 \cdots a_{i-1})$ has a left child, or the first caret of $T_+(a_1 \cdots a_{i-1})$ is not exposed, which will again imply by fact 1 above that $N(a_1 \cdots a_i) \geq N(a_1 \cdots a_{i-1})$. To verify the claim, first note that since
the letter $x_1$ cannot occur in $\eta(w_n)$, the word $a_1 \cdots a_i$ is a prefix of
$w_1$, and so neither $T_-(a_1 \cdots a_{i-2})$ nor $T_-(a_1 \cdots a_{i-1})$ contain any interior carets. Now if the root caret of $T_-(a_1 \cdots a_{i-2})$ has a left child, then so does the root caret of $T_-(a_1 \cdots a_{i-1})$. On the other hand, if $T_-(a_1 \cdots a_{i-2})$ consists only of right carets, then $T_+(a_1 \cdots a_{i-1})$ is obtained by hanging a caret from the second leaf of $T_+(a_1 \cdots a_{i-2})$, so that in this case, the first caret of $T_+(a_1 \cdots a_{i-1})$ is not exposed.
\end{proof}
In addition, we remark that the nested traversal forms, which are certainly 
not in general geodesics, are at least quasigeodesics. To see this, it is 
helpful to first make some preliminary observations about nested traversal 
paths for strictly negative words, each of which can be deduced from the 
algorithm for the construction 
of nested traversal paths.

\begin{observation}\label{nestedtraversalproperties}

\begin{enumerate}

\item A word $a_1 \cdots a_n$, where $a_i \in \{x_0, x_0^{-1}, x_1^{-1}\}$
for each $i$, 
is a nested traversal path if and only if the word satisfies the following three conditions:
    \begin{enumerate}
    \item The word is freely reduced.
    \item The exponent sum of $x_0$ in any prefix $a_1 \cdots a_k$ for $1 \leq k \leq n$ is not positive.
    \item If $a_k=a_{k+1}=x_0$ for some $k$, then $a_j=x_0$ for $k \leq j \leq n$.
    \end{enumerate} Hence, any prefix of a nested traversal path is again a nested traversal path.

\item Let $a_1 \cdots a_n$ be a nested traversal path with reduced tree pair diagram $(T_-, R_l)$ for some $l$. Then the first caret of $T_-$ is exposed if and only if $a_1=x_0^{-1}$ and the exponent sum of $x_0$ in every nonempty prefix $a_1 \cdots a_k$ is strictly negative.  Note that the final caret of $T_-$, always a right caret, is never exposed.

\item If $a_1 \cdots a_n$ is a nested traversal path with reduced tree 
pair diagram $(T_-, R_l)$ for some $l$, the numbers labeling the exposed
carets of $T_-$ can be algorithmically determined as follows. 
Caret 2 is exposed if $a_1=x_1^{-1}$, 
and not if $a_1=x_0^{-1}$.
    So by reading through the prefix $a_1$, it can be determined whether 
or not caret 2 is exposed.
     Inductively, suppose that by reading through a prefix $a_1 \cdots a_k$, 
you have decided whether or not carets $2$ through $j$ are exposed. Then for caret $j+1$, if

   \[
a_{k+1}=\left\{
\begin{array}{ll}
x_0&
\mbox{then move on to $a_{k+2}$ to make a decision about caret $j+1$.} \\
x_1^{-1} &
\mbox{and $a_k=x_0$, then move on to $a_{k+2}$ to make a decision about caret $j+1$.}\\
x_0^{-1} &
\mbox{then caret $j+1$ is not exposed.} \\
x_1^{-1} &
\mbox{and $a_k=x_0^{-1}$, then caret $j+1$ is exposed.}\\
x_1^{-1} &
\mbox{and $a_k=x_1^{-1}$, then caret $j+1$ is not exposed.}\\

\end{array}\right. \]
In the latter three cases, then move on to $a_{k+2}$ to make a decision about whether caret $j+2$
is exposed.
\end{enumerate}

\end{observation}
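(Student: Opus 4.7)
The plan is to prove each of the three parts by induction on $N(T_-)$, directly using the recursive rules (1)--(6) of the nested traversal construction.

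For part (1), the forward direction follows by verifying that each rule preserves conditions (a)--(c). Free reducedness (a) is already established in the paragraph preceding the observation. For (b), each of rules (1), (4), (6) contributes zero to the $x_0$-exponent sum; rule (2) contributes $-1$; and rules (3) and (5) contribute $x_0^{-1}\gamma_T x_0 (x_1^{-1})$, where by induction the prefix sums within $\gamma_T$ are $\leq 0$, so the enclosing $x_0^{-1}\cdots x_0$ keeps the running sum strictly below its pre-block value throughout $\gamma_T$ and restores it at the trailing $x_0$, never exceeding $0$. For (c), the letter $x_0$ appears only as the trailing generator of a rule-(3)/(5) block, so two consecutive $x_0$'s arise only when one such block terminates as the recursive $\gamma_T$ of an enclosing block. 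The infix order then forces any letter occurring after a run of $x_0$'s to come from a caret sitting outside the chain of nested blocks just closed, which is vacuous at the end of the tree, so the run must extend to $a_n$. For the converse, I would decode a word satisfying (a)--(c) by a matched-parenthesis algorithm: each $x_0$ pairs with the most recent unmatched $x_0^{-1}$ (possible by (b)), each pair frames a rule-(3) or rule-(5) block depending on whether it is immediately followed by $x_1^{-1}$, unmatched $x_0^{-1}$'s encode rule-(2) left carets, and unmatched trailing $x_0$'s (permitted by (c)) encode a chain of rule-(5) blocks whose recursive $\gamma_T$ is itself such a chain. The prefix statement is then immediate since conditions (a), (b), and (c) each survive truncation.

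For part (2), caret $1$ contributes nothing, so $a_1$ is produced by caret $2$. Caret $2$ is a left caret of the root exactly when it is the left child of caret $1$, yielding $a_1 = x_0^{-1}$ via rule (2); otherwise caret $2$ lies in the right subtree of the root and the first letter is generated by rule (3), (4), or (5). The strictly-negative prefix-sum condition then pins down that the opening $x_0^{-1}$ is never matched by a later $x_0$, forcing every subsequent caret to remain inside the subtree rooted at caret $2$; equivalently, caret $2$ is a left caret and caret $1$ has no right child, so the right leaf of caret $1$ is exposed. The claim that the final caret of $T_-$ is never exposed follows from reducedness of $(T_-,R_l)$: the final caret of $T_-$ is necessarily a right caret, and were it exposed, the final (exposed) right caret of $R_l$ at the same infix number would reduce against it.

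Part (3) is a finite case analysis on the local bigram $(a_k, a_{k+1})$, decoding which rule generates caret $j+1$. If $a_{k+1} = x_0$, we are inside an unfinished rule-(3)/(5) block and the decision for caret $j+1$ is deferred to the next letter. If $a_{k+1} = x_0^{-1}$, we open a new rule-(3) or rule-(5) block so caret $j+1$ has a nonempty right subtree and is not exposed. If $a_{k+1} = x_1^{-1}$, the interpretation depends on $a_k$: when $a_k = x_0^{-1}$ we are in the $\gamma_T$ of a rule-(3) block and caret $j+1$ is an interior caret with empty right subtree (rule 4), hence exposed; when $a_k = x_0$ we have just appended the trailing $x_1^{-1}$ completing the preceding rule-(3) block, so the decision is deferred; when $a_k = x_1^{-1}$ we are chaining rule-(4) contributions down the right subtree, so caret $j+1$ sits strictly below its predecessor and is not exposed. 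The main obstacle to verifying the table is correctly distinguishing the two roles of $x_1^{-1}$, as the trailing letter of rule (3) versus a standalone rule-(4) contribution, which the bigram split on $a_k$ is designed precisely to capture.
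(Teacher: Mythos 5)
The paper gives no proof of this observation at all---it is asserted to be ``deduced from the algorithm''---and your proposal carries out exactly that deduction (block structure of rules (3)/(5), unmatched $x_0^{-1}$'s as rule-(2) left carets, bigram analysis for the two roles of $x_1^{-1}$), so the approach matches and the argument is sound in substance. Two local slips to repair before writing it up: in part (2), caret $2$ is never the left \emph{child} of caret $1$ (you mean it is caret $1$'s parent on the left spine, which happens exactly when caret $1$ has no right child, i.e.\ when the rule-(2) letter $x_0^{-1}$ opening the word is never matched); and in the last case of part (3), the rule-(4) caret in question is the infix \emph{successor} of a caret with no right descendants, hence sits \emph{above} its predecessor with the predecessor in its left subtree---that, not ``sitting below,'' is why it is not exposed.
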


With this observation in hand, we are ready to prove that nested traversal normal forms are quasigeodesics.
Recall that
for a group $G$ with finite generating set $A$,
a word $y$ is a $(\lambda,\epsilon)$-quasigeodesic for constants $\lambda \ge 1$ and
$\epsilon \ge 0$ if the unit speed path $p:[0,l(y)] \rightarrow \Gamma(G,A)$ labeled by $y$ satisfies
$\frac{1}{\lambda}|s-t| -\epsilon \le d_A(p(s),p(t)) \le \lambda |s-t|+\epsilon$
for all $s,t \in [0,l(y)]$.

\begin{theorem} For every $w \in F$
the nested traversal normal form $\eta(w)$ is a $(\lambda,\epsilon)$-quasigeodesic
with $\lambda=6$ and $\epsilon=0$.
\end{theorem}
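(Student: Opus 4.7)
My plan for the upper bound $d_A(p(s),p(t))\le 6|s-t|$ is immediate: since $p$ is a unit-speed path through edges of the Cayley graph, $d_A(p(s),p(t))\le|s-t|\le 6|s-t|$.

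For the lower bound, I will show $l_A(u)\ge k/6$ for every contiguous subword $u=a_{s+1}\cdots a_t$ of $\eta(w)$, where $k:=t-s$. Recall from the proof of Theorem~\ref{Nincreases} that $\eta(w)=w_1w_2$ is the free reduction of $\eta(w_p)\eta(w_n)$, with $w_1$ a prefix of $\eta(w_p)$ and $w_2$ a suffix of $\eta(w_n)$. The Cleary--Taback fact quoted before Theorem~\ref{Nincreases} gives that $\eta(w_n)$ is a geodesic word for the strictly negative element $w_n$; by inversion $\eta(w_p)=(\eta(w_p^{-1}))^{-1}$ is a geodesic word for $w_p$. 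Since sub-paths of geodesic paths in a Cayley graph are themselves geodesics, every subword of $\eta(w_p)$ or of $\eta(w_n)$ is a geodesic word. In particular, if $u$ lies entirely within $w_1$ or within $w_2$, then $l_A(u)=k\ge k/6$ immediately.

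Otherwise $u=u_1u_2$ straddles the junction, with $u_1$ a nonempty suffix of $w_1$ and $u_2$ a nonempty prefix of $w_2$. Both pieces are then geodesic, so $l_A(u_i)=l(u_i)=:k_i$ with $k_1+k_2=k$; moreover $u_1$ uses only letters in $\{x_0^{\pm1},x_1\}$ and $u_2$ only in $\{x_0^{\pm1},x_1^{-1}\}$. The triangle inequality $l_A(u)\ge|k_1-k_2|$ already gives $l_A(u)\ge k/6$ whenever $\max(k_1,k_2)\ge 7k/12$, so the real work lies in the balanced regime $k_1,k_2\in(5k/12,7k/12)$. There I plan to exploit the letter restriction: an induction on length using the consequence $x_0^{-m}x_1=x_{m+1}x_0^{-m}$ of the defining relation $x_0^{-1}x_1x_0=x_2$ will show that the infinite normal forms are
\[
u_1 \;=\; \alpha_p\cdot x_0^{-k_1'}, \qquad u_2 \;=\; x_0^{\,k_2'}\cdot\beta_n,
\]
with $\alpha_p$ strictly positive and $\beta_n$ strictly negative, so $u=\alpha_p\cdot x_0^{\,k_2'-k_1'}\cdot\beta_n$. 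Absorbing the middle $x_0$-factor into $\alpha_p$ or $\beta_n$ merely shifts indices while preserving interior carets, so $T_+(u)$ inherits the interior caret structure of $T_+(u_1)$ and $T_-(u)$ that of $T_-(u_2)$; combined with the caret bounds $N(u_i)\ge k_i/4$ and $l_A(u)\ge N(u)-2$ from Lemma~\ref{lengthisN}, this will give $l_A(u)\ge k/6$ for all but a bounded initial range of $k$, which I will check directly.

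The main obstacle will be this balanced-straddling step: one must verify that the combined caret count $N(u)$ does not collapse when forming the product $u_1u_2$, so that the needed additivity $N(u)\gtrsim k/4$ survives up to a bounded loss.
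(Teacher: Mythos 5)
Your upper bound and your treatment of subwords lying entirely in $w_1$ or in $w_2$ match the paper. The gap is in the balanced straddling case, and it is quantitative, not merely technical: the estimate $N(u)\gtrsim k/4$ that your argument needs is false. The inequality $N(u_i)\ge k_i/4$ for each half is correct (it is just $l_A(u_i)\le 4N(u_i)$ from Lemma~\ref{lengthisN} applied to a geodesic), but the additivity $N(u_1u_2)\ge N(u_1)+N(u_2)-O(1)$ fails badly: the two halves can share essentially all of their carets. Already for $u=\eta(w)$ with $u_1=w_1$, $u_2=w_2$ one has $N(u)=N(w)=N(u_1)=N(u_2)$ up to a bounded error, so $N(u)$ is about half of $N(u_1)+N(u_2)$, and since a single caret pair can contribute up to $6$ letters to $\eta(w)$ (three from each tree, when both carets are interior with nonempty right subtrees), $N(u)$ can be as small as roughly $k/6$. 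Feeding $N(u)\ge k/6$ into $l_A(u)\ge N(u)-2$ yields only $l_A(u)\ge k/6-2$, i.e.\ a $(6,2)$-quasigeodesic bound. That additive deficit of $2$ persists for every $k$, so it cannot be repaired by checking a bounded initial range of lengths directly.

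What achieves $\epsilon=0$ in the paper is a matched per-caret-pair accounting rather than a global count: one first proves (via the exposed-caret criteria of Observation~\ref{nestedtraversalproperties} and the bookkeeping in Theorem~\ref{Nincreases}) that the straddling subword $u_1u_2$ is itself a nested traversal normal form, $\eta(u)=u_1u_2$, and then compares contributions caret pair by caret pair using Lemma~\ref{lem:penalty}: each pair contributes at most $6$ to $l(\eta(u))$ and at least $1$ to $l_A(u)$, except that the pairs which contribute $0$ to $l_A(u)$ are shown to contribute $0$ to $l(\eta(u))$ as well. Without that last matching step (or something equivalent), the two exceptional caret pairs of Lemma~\ref{lengthisN} force a nonzero additive constant. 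Your reduction via the triangle inequality $l_A(u)\ge|k_1-k_2|$ is fine but only disposes of the unbalanced regime, which is not where the difficulty lies; and your normal-form claim $u_1=\alpha_p x_0^{-k_1'}$ also needs care, since a suffix of $w_1$ need not satisfy the prefix exponent-sum condition of Observation~\ref{nestedtraversalproperties}, so the trailing $x_0$-power can have either sign. To repair the proof you should replace the caret-count step by the argument that $u_1u_2$ is again a nested traversal normal form and then invoke the per-caret comparison.
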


\begin{proof}
Let $w \in F$.  From facts noted in the proof of Theorem \ref{Nincreases}, 
we can write $\eta(w_p)\eta(w_n)=w_1 x_0^n x_0^{-n}w_2$ and 
$\eta(w)=w_1w_2=a_1 \cdots a_p$ for some $n \ge 0$ and each 
$a_i \in \{x_0^{\pm 1},x_1^{\pm 1}\}$.
It follows from the formula for length in Lemma \ref{lem:penalty} 
that each pair of carets in $T_-(w)$ and $T_+(w)$, other than the 
first pair and the last pair, contribute some nonzero number between 
one and 4 to $l_A(w)$.  However, each such caret pair contributes at 
most 6 to $l(\eta(w))$, so the length contribution to the normal form 
is at most 6 times the contribution to length in the Cayley graph. The 
first and last caret pairs are a slightly special case, since together 
they may contribute 0, 1, or 2 to both $l_A(w)$ and $l(\eta(w))$. However, 
one checks that if there is no contribution to $l_A(w)$ from these carets, 
then the first caret in both trees is the root caret, and so the 
contribution to $l(\eta(w))$ from these carets will be zero as well. 
Thus we obtain $$l_A(w)  \leq l(\eta(w)) \leq 6~ l_A(w)$$ for any nested 
traversal normal form $\eta(w)$.

 We now show that these inequalities hold for an arbitrary subword of 
$\eta(w)$. So suppose $u=a_i \cdots a_{i+k}$, with $1 \leq i < i+k \leq r$. 
If $a_{i+k}$ is a letter of $w_1$, then since $w_1 x_0^n=\eta(w_p)$ is a 
geodesic, then $l_A(u)  \leq l(\eta(u)) \leq 6 l_A(u)$. The 
same argument, using the fact that $\eta(w_n)$ is a geodesic, 
holds in the case where $a_i$ is a letter in $w_2$.

 Now assume $a_1 \cdots a_i$ is a prefix of $w_1$ and $a_{i+k} \cdots a_p$ is a suffix of
 $w_2$. Let $u_1$ be the suffix of $w_1$ starting with $a_i$, and let $u_2$ be the prefix of $w_2$ ending with $a_{i+k}$; then $u=u_1u_2$. We prove below that for the subword $u$, $\eta(u_p)=u_1x_0^s$ and $\eta(u_n)=x_0^{-s}u_2$ for some $0 \leq s \leq n$. This then implies that $\eta(u)=u_1u_2$, and therefore $u$ is a nested traversal normal
 form, and $l_A(u)\leq l(\eta(u)) \leq 6 l_A(u)$.

Since $x_0^{-n}u_2$ and $x_0^{-n}u_1^{-1}$ are prefixes of $\eta(w_n)$ and $\eta(w_p^{-1})$, 
they are also nested traversal paths according to the first part of 
Observation~\ref{nestedtraversalproperties} .
Let $(T_-',R_k)$ denote the reduced tree pair diagram for $x_0^{-n}u_2$ and let $(T_-'',R_l)$ be the reduced tree pair for $x_0^{-n}u_1^{-1}$.
There is a two step process to transform the pair of trees $(T_-',T_-'')$ into the reduced tree pair diagram for $u=u_1u_2$.
In the first step, if
the numbers $k$ and $l$ of
carets in each of these trees are not equal, we add a string of $|k-l|$ right carets to
the final leaf of the smaller tree.
The second step consists of reducing the resulting tree pair.
Note that by construction, the final caret of both $T_-'$ and $T_-''$
cannot be exposed, and so any carets added in the first step will not be removed
in the second step.
Each time a caret is removed, there is a corresponding change
to the pair of nested traversal paths, which we track below;
at the end of this process, 
we obtain the nested traversal paths for $u_n$ and $u_p$.

If caret 1 is exposed in both trees, it must be that $n>0$, so
 removing this caret pair corresponds algebraically to canceling
 the central $x_0x_0^{-1}$ pair to obtain the word $u_1 x_0^{n-1} x_0^{-(n-1)} u_2$.
Note that both $x_0^{-(n-1)}u_2$ and $x_0^{-(n-1)}u_1^{-1}$ are both again 
nested traversal paths.
 Now repeat, and eventually reach a point where caret 1 is not exposed 
in one of $T_-( x_0^{-s} u_2)$ and $T_-(x_0^{-s}u_1^{-1})$, for some 
$0 \leq s \leq n$. Note that if $s=0$, caret 1 cannot be exposed in 
both trees.  Hence this process must successfully terminate.

Next we check for possible reduction of caret pairs numbered greater than 1.
Part 3 of Observation~\ref{nestedtraversalproperties} shows that caret 2 can only be exposed
in both trees $T_-( x_0^{-s} u_2)$ and $T_-(x_0^{-s}u_1^{-1})$
if $s=0$ and both $u_1^{-1}$ and $u_2$ start with the letter $x_1^{-1}$; however, $u_2$ is
a prefix of the word $w_2$ and $u_1$ is a suffix of $w_1$, and the word $w_1w_2$ is freely reduced.
For caret pairs numbered between 3 and $\min\{k,l\}-(n-s)$, the algebraic criteria
from part (3) of Observation~\ref{nestedtraversalproperties} by which we check
for exposure of these carets only depends upon letters in the words $u_2$ and $u_1^{-1}$ which,
as noted above, are prefixes of the words $w_2$ and $w_1^{-1}$, respectively.  Since the
tree pair diagram for $w_1w_2$ is reduced, then none of these carets can  be removed.
Carets numbered above $\min\{k,l\}-(n-s)$ were added in the first step, and hence also cannot
be removed.

 Hence, $\eta(u_p)\eta(u_n)= u_1 x_0^s x_0^{-s} u_2$, so $\eta(u)=u_1u_2$, and the inequalities follow. Thus nested traversal normal forms are (6,0)-quasigeodesics.
\end{proof}


\section{Constructing the combing of $F$}
\label{sec:combing}

In this section, we construct a $1$-combing of
the group $F$ with respect to the presentation
$$\langle x_0,x_1 |
[x_0x_1^{-1},x_0^{-1}x_1x_0],[x_0x_1^{-1},x_0^{-2}x_1x_0^2]
\rangle~;$$
in Section \ref{sec:tameness} we will show that this
combing satisfies a linear radial tameness
function.  Let $X$ be the Cayley complex for this
presentation.

We first construct a $0$-combing of $F$ with respect to
$A=\{x_0, x_1\}$ by defining a continuous function $\Psi:X^0 \times [0,1]
\rightarrow X^1$ where, for any $w \in F$, the restriction
$\Psi:\{w\}
\times [0,1] \rightarrow X^1$ is labeled by the nested traversal
normal form $\eta(w)$ for $w$.
We call this 0-combing $\Psi$ the {\em nested
traversal 0-combing}.

Now we must extend this 0-combing to a 1-combing.  All edges in
the Cayley graph fall into one of two categories, ``good" and
``bad". The good edges consist of those edges where the combing
path to one endpoint contains the other endpoint, and thus points
along that edge are combed through the $1$-skeleton.  The bad
edges include all of those edges where this is not the case, and thus
the points along the edge in question must be combed through the
$2$-skeleton. To make this more formal, we introduce some notation.
For each $w \in F$, let
$\Psi_w$ be the 0-combing path in $X^1$ from the identity to $w$ (labeled
by $\eta(w)$),
and let $\Psi_w^{-1}$ be the inverse path from $w$ to the identity.
Recall that each directed edge in the Cayley graph
$X^1=\Gamma(F,\{x_0,x_1\})$ is labeled either by the generator $x_0$
or the generator $x_1$.  We formalize the notion of good and bad edges in the following
definition.

\begin{definition}
If the set of endpoints of an edge $e$ is of the form $\{ w, wx_0^{-1} \}$,
we denote the edge as $e_0(w)$, and if the set of endpoints of an edge $e$
is of the form $\{ w, wx_1^{-1} \}$, we denote the edge as $e_1(w)$.

Moreover, for the edge $e_a(w)$, where $a \in \{0,1\}$, if the loop
$\gamma_e:= \Psi_w e_{a}(w) \Psi_{wx_{a}^{-1}}^{-1}$
is homotopic to the trivial loop in the Cayley graph, we call the
edge $e_a(w)$ a {\em good edge},
and if not, we call the edge $e_a(w)$ a {\em bad edge}.
\end{definition}

Theorem \ref{combingextends} describes conditions on the Cayley
complex which allow us to extend the nested traversal
0-combing to a 1-combing.

\begin{theorem}\label{combingextends}
Let $\mathcal B$ be the set of bad edges in the Cayley complex $X$
 with respect to the nested traversal 0-combing.
Suppose that
\begin{enumerate}
\item there
is a partial ordering of $\mathcal B$ with the property that for
any edge $e \in \mathcal B$, the set of edges $\{ f \in \mathcal B | f < e
\}$ is finite, and
\item there is a function $c$ from
$\mathcal B$ to the set of 2-cells of $X$, so that for every $e \in
\mathcal B$ the edge $e$ is on the boundary of $c(e)$, and
whenever $f \in \mathcal B$ is another edge on the boundary of $c(e)$,
then $f<e$.
\end{enumerate}
Then the nested traversal 0-combing $\Psi:X^0 \times [0,1]
\rightarrow X^1$ can be extended to a 1-combing $\Psi:X^1 \times
[0,1] \rightarrow X^2$.
\end{theorem}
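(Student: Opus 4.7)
The plan is to construct, for each edge $e$ of $X^1$ with endpoints $v_1$ and $v_2$, a continuous map $H_e\colon e\times[0,1]\to X^2$ satisfying $H_e(v_i,t)=\Psi_{v_i}(t)$ for $i=1,2$, together with $H_e(x,0)=\epsilon$ and $H_e(x,1)=x$ for all $x\in e$; the maps $\{H_e\}$ will then glue along vertices to give the required extension of $\Psi$. Specifying such an $H_e$ is equivalent to choosing a disk in $X^2$ bounded by $\gamma_e=\Psi_{v_1}\cdot e\cdot\Psi_{v_2}^{-1}$, together with a suitable parameterization of the three prescribed sides of $e\times[0,1]$.

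For a good edge $e$, the loop $\gamma_e$ is null-homotopic inside the $1$-complex $X^1$. Since $\eta(v_1)$ and $\eta(v_2)$ are freely reduced words, this forces one of the two combing paths to agree, after free reduction, with the other followed by the edge $e$. I would use this to parameterize $H_e$ so that each point $x\in e$ is combed by first following the appropriate combing path up to an endpoint and then sliding along $e$ to $x$ in the final instant of time, with the image of $H_e$ lying entirely in $X^1$.

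For bad edges, I would argue by well-founded induction using the partial order of hypothesis (1), whose finite down-sets guarantee well-foundedness. Fix a bad edge $e$ and assume $H_f$ is defined on every good edge and on every bad edge $f<e$. Write $\partial c(e)=e\cdot f_1\cdots f_k$ as a cyclic edge sequence, and let $v_2=u_0,u_1,\ldots,u_k=v_1$ denote the vertices along $f_1\cdots f_k$. By hypothesis (2), each $f_i$ is either good or a bad edge with $f_i<e$, so each $H_{f_i}$ is already defined and supplies a disk $D_i$ bounding $\gamma_{f_i}$. Attaching the $2$-cell $c(e)$ along $e$ reduces filling $\gamma_e$ to filling the loop $\Psi_{v_1}\cdot f_k^{-1}\cdots f_1^{-1}\cdot\Psi_{v_2}^{-1}$; inserting $\Psi_{u_j}\cdot\Psi_{u_j}^{-1}$ at each intermediate vertex rewrites this as the concatenation of the $k$ loops $\Psi_{u_j}\cdot f_j^{-1}\cdot\Psi_{u_{j-1}}^{-1}$, each of which equals $\gamma_{f_j}^{\pm1}$ and is therefore filled by $D_j$. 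Wedging these disks at $\epsilon$ and attaching $c(e)$ produces the desired disk bounding $\gamma_e$, which I then parameterize as $H_e$.

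The main technical point will be carrying out the parameterization of the wedge-of-disks-plus-$c(e)$ continuously, so that $H_e$ varies continuously in the edge coordinate. However, because every $H_e$ restricted to $\{v_i\}\times[0,1]$ is by construction the common $0$-combing path $\Psi_{v_i}$, the gluing of the $H_e$'s along shared vertices is automatic, and the only real work lies in the inductive reparameterization step; the resulting map $\Psi\colon X^1\times[0,1]\to X^2$ then satisfies all three defining conditions of a $1$-combing.
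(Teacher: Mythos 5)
Your proposal is correct and follows essentially the same route as the paper: comb good edges through the $1$-skeleton using the fact that one endpoint's normal form extends the other's by the edge label, then use well-founded (Noetherian) induction on the bad edges, filling across the $2$-cell $c(e)$ by concatenating the already-defined combing homotopies over $\partial c(e)\setminus \mathrm{Int}(e)$ with the deformation of $e$ across $c(e)$. The "main technical point" you defer is exactly what the paper resolves by composing the homotopy $\Theta:e\times[0,1]\to c(e)$ with the combing of the remaining boundary, which makes continuity in the edge coordinate immediate.
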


\begin{proof}
We remark that the hypotheses of the theorem imply that the
mapping from bad edges to 2-cells is injective.
Let $\mathcal G$ be the set of good edges.
We extend the
0-combing in two stages. First, extend $\Psi:X^0 \times [0,1]
\rightarrow X^1$ to $\Psi:(X^0 \cup \mathcal G) \times [0,1]
\rightarrow X^1$ using the homotopies for the good edges. Next,
note that the partial ordering on $\mathcal B$ is
well-founded, and so we may apply Noetherian induction to
define $\Psi$ on bad edges as follows.
Suppose we have already extended the combing
to $\Psi:(X^0 \cup \mathcal G \cup S) \times [0,1] \rightarrow X^2$, where
$S:= \{ e' \in \mathcal B | e' < e \}$
for a particular edge $e$. Since $c(e)$ is a 2-cell, there is a homotopy
from $e$, through $c(e)$, to the remainder of the boundary $\partial c(e)$
excluding the interior $Int(e)$ of $e$, which fixes the endpoints of $e$.
More specifically, let $\Theta:e \times [0,1] \ra c(e)$ satisfy: for each point $p$
in the edge $e$,
$\Theta(p,0) \in \partial c(e) \setminus Int(e)$ and $\Theta(p,1)=p$;
the image $\Theta(e \times \{0\})=\partial c(e) \setminus Int(e)$;
and for the endpoints $g$ and
$h$ of $e$ and for all $t \in [0,1]$, $\Theta(g,t)=g$ and $\Theta(h,t)=h$.
Since all edges in $\partial c(e) \setminus Int(e)$ are in
$\mathcal G \cup S$, the combing $\Psi:(X^0 \cup \mathcal G
\cup S) \times [0,1] \rightarrow X^2$ provides
combing paths from the identity to each of the points of
$\partial c(e) \setminus Int(e)$.  Reparametrize these
paths and concatenate them with the paths from the homotopy
$\Theta$ to define the homotopy $\Psi:e \times [0,1] \ra X$.
This yields a homotopy $\Psi:(X^0 \cup \mathcal G \cup S \cup
\{e \}) \times [0,1] \rightarrow X^2$. Then, by
induction, the 0-combing extends to a 1-combing $\Psi:X^1 \times
[0,1] \rightarrow X^2$.
\end{proof}

The remainder of this section is devoted to establishing the
hypotheses of Theorem$~\ref{combingextends}$.


\subsection{Identifying the good edges}\label{subsec:good}

The goal of this section is to identify the good edges in
$\Gamma(F,A=\{x_0,x_1\})$. This is accomplished in the following
theorem.

\begin{theorem}\label{goodedges}
Let $w\in F$ have reduced tree pair diagram $(T_-(w), T_+(w))$.
If any one of the following four conditions holds, then
the edge $e_a(w)$, with $a \in \{0,1\}$, is a good edge:
\begin{enumerate}
\item The index $a=0$.
\item The tree $T_-(w)$ has at most two right carets.
\item The tree $T_-(w)$ has at least three right carets, no carets
need be removed to reduce the tree pair diagram for $wx_1^{-1}$, and
all carets following the third right caret in $T_-(w)$, if any, are right carets.
\item The tree $T_-(w)$ has at least three right carets but
no interior carets,
caret $n$ must be removed to reduce the tree pair diagram for
$wx_1^{-1}$, and caret $n$ is the first exposed
caret in $T_+(w)$.
\end{enumerate}
\end{theorem}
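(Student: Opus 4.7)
The plan is to translate the good-edge condition into a combinatorial statement about free reduction of normal forms. Since the Cayley graph $X^1$ is a 1-complex, the loop $\gamma_{e_a(w)} = \Psi_w \cdot e_a(w) \cdot \Psi_{wx_a^{-1}}^{-1}$ is null-homotopic in $X^1$ if and only if the edge-path label $\eta(w) \cdot x_a^{-1} \cdot \eta(wx_a^{-1})^{-1}$ freely reduces to the empty word. Since $\eta(wx_a^{-1})$ is itself freely reduced, this amounts to showing that either (i) $\eta(w) \cdot x_a^{-1}$ is already freely reduced and equals $\eta(wx_a^{-1})$, or (ii) $\eta(w)$ ends in $x_a$ and, after one cancellation, equals $\eta(wx_a^{-1}) \cdot x_a$ with the trailing $x_a$ cancelled. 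In each of the four cases, I would identify which alternative holds and verify it by direct comparison of the two words.

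Before splitting into cases, I would make use of two auxiliary tools. First, the decomposition $\eta(w) = w_1 w_2$ with $\eta(w_p) = w_1 x_0^n$ and $\eta(w_n) = x_0^{-n} w_2$ recorded in the proof of Theorem~\ref{Nincreases}, which controls the last letter of $\eta(w)$ in terms of either $\eta(w_n)$ (if $w_2$ is nonempty) or $\eta(w_p)$. Second, the algorithmic description from Observation~\ref{nestedtraversalproperties} of the exposed carets of $T_-$ for a strictly negative element in terms of the letter sequence of its nested traversal path; the same observation yields, dually via inversion, a description of the exposed carets of $T_+$ applied to $w_p$. These tools let me read the structural features of $T_\pm(w)$ directly off of $\eta(w)$ and avoid explicit tree manipulation at each step.

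For Case~1 ($a=0$), I would split along the trichotomy from the proof of Theorem~\ref{Nincreases}: carets are added, preserved, or removed in passing from $w$ to $wx_0^{-1}$. Facts~1 and~3 from that proof identify the conditions on $T_-(w)$ and $T_+(w)$ governing each scenario. In the first two scenarios, I verify by running the nested traversal algorithm one step further that $\eta(w) \cdot x_0^{-1}$ is freely reduced and equals $\eta(wx_0^{-1})$, i.e.\ alternative (i). In the third scenario, where a caret must be removed when reducing the tree pair diagram for $wx_0^{-1}$, I show that $\eta(w)$ must end in $x_0$, so that alternative (ii) applies and the single cancellation $x_0 x_0^{-1}$ at the end of $\eta(w) \cdot x_0^{-1}$ mirrors the caret reduction on the tree side.

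For Cases~2, 3, and~4, where $a=1$, the action of $x_1^{-1}$ is concentrated on the right side of $T_-(w)$. Case~2 is the base case: with at most two right carets in $T_-(w)$, appending $x_1^{-1}$ to $\eta(w)$ matches exactly the algorithmic processing of the new interior caret introduced in $T_+(wx_1^{-1})$, yielding alternative (i). Case~3, where no reduction of carets occurs and every caret following the third right caret of $T_-(w)$ is a right caret, forces the tail of $\eta(w_n)$ to be a string of $x_0^{-1}$'s coming from processing those right carets via rules (2) and (5)--(6) of the nested traversal construction; appending $x_1^{-1}$ then aligns with the new interior caret in $T_+(wx_1^{-1})$, again giving alternative (i). Case~4, where caret $n$ is removed in reduction and is the first exposed caret in $T_+(w)$, is handled via alternative (ii): the dual algorithmic description shows that $\eta(w_p)$ ends precisely with the $x_1$ generated by processing the exposed caret $n$, so that appending $x_1^{-1}$ triggers exactly one cancellation mirroring the caret reduction. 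The main obstacle will be the granularity of the case tree, since the nested traversal algorithm branches on several tree features (interior vs.\ right vs.\ left caret, subtree emptiness, existence of specified children); I expect Observation~\ref{nestedtraversalproperties} to do most of the heavy lifting, reducing the argument to finite checks on the last few letters of $\eta(w_p)$ and $\eta(w_n)$.
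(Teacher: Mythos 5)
Your proposal is correct and follows essentially the same route as the paper, whose proof of this theorem (via Lemmas~\ref{x0generator} and~\ref{x1generator}) proceeds exactly by comparing $\eta(w)$ with $\eta(wx_a^{-1})$ case by case and showing that either $\eta(wx_a^{-1})=\eta(w)x_a^{-1}$ or $\eta(w)=\eta(wx_a^{-1})x_a$, i.e.\ your alternatives (i) and (ii). The only caveat is that your preliminary assignment of scenarios to alternatives is not exact --- for instance, for $a=0$ with no caret added or removed the paper finds $\eta(w)=\eta(wx_0^{-1})x_0$ whenever $E(w)\neq R_k$, so alternative (ii) can occur even without a caret cancellation --- but since either alternative yields a good edge, this would be sorted out harmlessly in the verification you describe.
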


We prove this theorem in two lemmas, considering separately the
cases $e_0(w)$ and $e_1(w)$. To prove each lemma, we simply
compare the nested traversal forms for the two endpoints of the
edge in each situation in the hypotheses of
Theorem~\ref{goodedges}.

\begin{lemma}\label{x0generator}
Let $w \in F$.  Then $e_0(w)$ is a good edge.
\end{lemma}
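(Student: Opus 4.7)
The aim is to show that the loop $\gamma_e = \Psi_w \cdot e_0(w) \cdot \Psi_{wx_0^{-1}}^{-1}$ is null-homotopic in the Cayley graph $X^1$. Since $X^1$ is a 1-complex, this amounts to showing the word $\eta(w) \cdot x_0^{-1} \cdot \eta(wx_0^{-1})^{-1}$ freely reduces to the empty word on $\{x_0^{\pm 1}, x_1^{\pm 1}\}$, equivalently, that $\eta(wx_0^{-1})$ equals the free reduction of $\eta(w) \cdot x_0^{-1}$ as words in the free group on $\{x_0, x_1\}$.

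My main tool will be a lemma for purely negative elements: for any purely negative $u \in F$, the free reduction of $\eta(u) \cdot x_0^{-1}$ equals $\eta(ux_0^{-1})$. I will prove this using Observation~\ref{nestedtraversalproperties}, which characterizes nested traversal paths of negative elements by three conditions: a freely reduced word in $\{x_0, x_0^{-1}, x_1^{-1}\}$; non-positive $x_0$-exponent sum in every prefix; and any two consecutive $x_0$'s occurring only at the end. The free reduction of $\eta(u) \cdot x_0^{-1}$ inherits each condition from $\eta(u)$: it is freely reduced by construction; the $x_0$-exponent sum of each prefix only decreases when appending $x_0^{-1}$ (possibly with cancellation), so remains non-positive; and the possible cancellation of one trailing $x_0$ from a trailing $x_0$-block preserves the block-at-end structure. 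Hence the free reduction is a nested traversal path for $ux_0^{-1}$, and by the determinism of the nested traversal algorithm it must equal $\eta(ux_0^{-1})$.

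For general $w$, I will decompose $\eta(w) = w_1 w_2$ as in the proof of Theorem~\ref{Nincreases}, with $\eta(w_p) = w_1 x_0^n$ and $\eta(w_n) = x_0^{-n} w_2$ for some $n \geq 0$. Then the free reduction of $\eta(w) \cdot x_0^{-1}$ equals the free reduction of $\eta(w_p) \cdot \eta(w_n) \cdot x_0^{-1}$, which by the lemma equals the free reduction of $\eta(w_p) \cdot \eta(w_n x_0^{-1})$. This must then be matched with $\eta(wx_0^{-1})$, which by definition is the free reduction of $\eta((wx_0^{-1})_p) \cdot \eta((wx_0^{-1})_n)$. In the generic case $(wx_0^{-1})_p = w_p$ and $(wx_0^{-1})_n = w_n x_0^{-1}$ as infinite normal forms, so the match is immediate. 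The main obstacle lies in the exceptional cases where cancellations in the infinite normal form of $wx_0^{-1}$ shift the positive/negative decomposition (for instance, when $w_n$ is trivial and $w_p$ ends in $x_0$, so that the appended $x_0^{-1}$ is absorbed into the positive part); in these cases I will show that the rearrangements in the infinite normal form correspond precisely to additional free reductions inside $\eta(w_p) \cdot \eta(w_n x_0^{-1})$, so that both expressions yield the same freely reduced word.
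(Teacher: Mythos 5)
Your reformulation of goodness is correct (since $\eta(wx_0^{-1})$ is freely reduced, $e_0(w)$ is good exactly when $\eta(wx_0^{-1})$ is the free reduction of $\eta(w)\,x_0^{-1}$), and your Step 1 is sound and genuinely slicker than what the paper does for that piece: deducing the purely negative case from the three-condition characterization in Observation~\ref{nestedtraversalproperties}, plus the fact that a nested traversal path determines the negative element it represents, avoids any tree-by-tree analysis. (Do record the small point that $w_n x_0^{-1}$ is again purely negative, so that its nested traversal path is defined; this holds because multiplying a pair $(T_-,R_k)$ by $x_0^{-1}$ only ever appends right carets to the positive tree.)

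The gap is in the final step. Writing $z=wx_0^{-1}$, you must show that the free reductions of $\eta(w_p)\,\eta(w_nx_0^{-1})$ and of $\eta(z_p)\,\eta(z_n)$ agree, and the two factorizations $z=w_p\cdot(w_nx_0^{-1})$ and $z=z_p\cdot z_n$ differ precisely when a caret must be removed to reduce the tree pair diagram for $wx_0^{-1}$ --- concretely, when $A(w)=B(w)=\emptyset$ and the first caret of $T_+(w)$ is exposed, so that $T_+(z)$ is $T_+(w)$ with its first caret deleted. Closing this case requires two identities that your Observation-based lemma cannot supply, because they concern the positive parts: $\eta(w_p)=\eta(z_p)\,x_0$ (which rests on the facts that caret $1$ of $T_+(w)$ contributes nothing to $\eta(w_p^{-1})$ while caret $2$, necessarily a left caret, contributes exactly one $x_0^{-1}$), and $\eta(w_nx_0^{-1})=x_0^{-1}\,\eta(z_n)$; given these, the leftover $x_0x_0^{-1}$ cancels and both sides reduce to $\eta(z)$. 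These computations are exactly the substance of the paper's proof of this lemma, and the sentence ``the rearrangements in the infinite normal form correspond precisely to additional free reductions'' states the desired conclusion rather than proving it. The plan is salvageable, but as written the hardest case of the lemma is asserted, not established.
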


\begin{proof}
Let $z=wx_0^{-1}$. We compare $\eta(w)$ and $\eta(z)$, and
show that either $\eta(z)=\eta(w)x_0^{-1}$ or $\eta(w)=\eta(z)x_0$, and so it follows immediately that $e_0(w)$
is a good edge.

As usual, let $w=(T_-(w),T_+(w))$ and $z=(T_-(z),T_+(z))$ denote
reduced tree pair diagrams. The tree pair diagram $(S_-,S_+)$ for $x_0^{-1}$
is given in Figure \ref{fig:generators}.

Suppose first that $T_-(w)$ has only one right caret, the root
caret.  The left subtree of the root caret must then be nonempty;
let $A(w)$ be this subtree, and
let $\gamma_A$ be the substring of $\eta(w_n)$
consisting of all generators corresponding to carets in $A(w)$.
Then $\eta(w_n)=\gamma_A x_0^{-1}$.
In multiplying $z=wx_0^{-1}$, a caret is appended to the
rightmost leaf of each of the trees
for $w$, and the tree $A(w)$ is appended to the leftmost leaf
of the trees $S_-$ and $S_+$ for $x_0^{-1}$.
Then $z=(T_-(z),T_+(z))$ where $T_-(z)$ consists of a root caret
with a left child whose left subtree is $A(w)$, and $T_+(z)$
is $T_+(w)$ with a single caret appended to its rightmost leaf.
This pair is reduced, so no caret is removed in performing
this product.
The tree $T_-(z)$ has a left caret
between the subtree $A(w)$ and the root, so $\eta(z_n)=\gamma_A
x_0^{-2}$. Since $T_+(z)$ is just $T_+(w)$ with a single caret
appended to its rightmost leaf, $\eta(w_p^{-1})=\eta(z_p^{-1})$.
Hence in this case, $\eta(z_p)\eta(z_n)=\eta(w_p) \gamma_A
x_0^{-2} = \eta(w_p)\eta(w_n)x_0^{-1}$.  Therefore, if $\eta(w)$ does not end in $x_0$, then $\eta(z)=\eta(w)x_0^{-1}$. However, if $\eta(w)$ does end in $x_0$, then $\eta(z)x_0=\eta(w)$.

For the remainder of this proof suppose that $T_-(w)$ has at least
two right carets. Let $A(w)$ be the left subtree of the root
caret, let $B(w)$ denote the left subtree of the right child of
the root caret, and let $E(w)$ denote the right subtree of the
right child of the root.  Let $\gamma_A$, $\gamma_B$, and
$\gamma_E$ denote the subwords of $\eta(w_n)$ corresponding to the
carets of these subtrees; note that any of these trees can be the
empty tree, and if so, the corresponding subword will be empty.
Define $\gamma_r:=1$ if the tree $A(w)$ is the empty tree
$\emptyset$ with no carets, and $\gamma_r:=x_0^{-1}$ if $A(w) \neq
\emptyset$, so that $\gamma_r$ is the contribution of the root
caret of $T_-(w)$ to the nested traversal normal form $\eta(w_n)$.
The nested traversal normal form for $w$ is then
\[
\eta(w_p)\eta(w_n)=\left\{
\begin{array}{ll}
\eta(w_p)\gamma_A \gamma_r \gamma_B &
\mbox{if $E(w) = R_k$ for some $k \ge 0$} \\
\eta(w_p) \gamma_A \gamma_r \gamma_B x_0^{-1} \gamma_E x_0 &
\mbox{if $E(w) \neq R_k$ for all $k$.}
\end{array} \right. \]
In this case no carets need to be
added to the tree pair for $w$ in order to perform the
multiplication $wx_0^{-1}$; the trees $A(w)$, $B(w)$, and $E(w)$ must
be appended to leaves 0, 1, and 2, respectively of the
trees $S_-$ and $S_+$.

A caret must be removed from the product $wx_0^{-1}$ to obtain the
reduced tree pair diagram if and only if the
trees $A(w)$ and $B(w)$ are both empty, and caret 1 in the tree
$T_+(w)$ is exposed. In this case, $T_+(z)$ is the tree $T_+(w)$
with the first caret removed. Note that caret 1 of $T_+(w)$
contributed nothing to the nested traversal normal form
$\eta(w_p^{-1})$, and that caret 2 of $T_+(w)$ must also be a left
caret, and so contributed $x_0^{-1}$.  The latter caret is caret 1
of $T_+(z)$. Hence $\eta(w_p^{-1})=x_0^{-1}\eta(z_p^{-1})$.
Analyzing the negative trees, we note that the tree $T_-(z)$ is
the tree with a single left caret, namely the root caret, having a
right subtree given by $E(w)$, and so $\eta(z_n)=\gamma_E$.
If $E(w)=R_k$ for some $k \ge 0$, then $\eta(w)=\eta(z)x_0$.
On the other hand, if $E(w) \neq R_k$ for any $k$, then
since $E(w)$ is the right subtree of the root of $T_-(z)$,
this subtree gives a
nonempty contribution to the nested traversal path, and hence $\eta(z_n)$
cannot end with the letter $x_0^{-1}$.  Thus when freely reducing
the word
$\eta(w_p)\eta(w_n)=\eta(z_p)x_0x_0^{-1}\eta(z_n)x_0$, only one
$x_0x_0^{-1}$ is removed, and $\eta(w)=\eta(z)x_0$.

Finally, suppose that no carets need to be removed in the
multiplication $wx_0^{-1}$. Then $T_+(z)=T_+(w)$ and $z_p=w_p$.  
Therefore  $\eta(z_p)\eta(z_n)=\eta(w_p) \gamma_A
\gamma_r \gamma_B x_0^{-1} \gamma_E$. 
If $E(w)=R_k$, then
$\eta(z_p)\eta(z_n)=\eta(w_p)\eta(w_n)x_0^{-1}$, and so either 
$\eta(z)=\eta(w)x_0^{-1}$, when $\eta(w)$ does not end in $x_0$, 
or else $\eta(z)x_0=\eta(w)$. On the other hand if $E(w) \neq R_k$, then
$\eta(w_p)\eta(w_n)=\eta(z_p)\eta(z_n)x_0$. Since $\eta(z_n)$ 
cannot end in $x_0^{-1}$, $\eta(w)=\eta(z)x_0$.
\end{proof}

Next, we turn to edges of the form $e_1(w)$. For such an edge, the
case where $T_-(w)$ has at least three right carets is by far the
most complicated, so before stating the desired lemma, we
establish some useful notation for that case.

\begin{notation} \label{not:4trees}
For any $w=(T_-(w),T_+(w)) \in F$ such that $T_-(w)$ has at least 3
right carets:
\begin{itemize}
\item Let $A(w)$ denote the left subtree of the root
caret, $B(w)$ denote the left subtree of the right child of
the root, and  $C(w)$ and $D(w)$ denote the left and right
subtrees, respectively, of the third right caret of $T_-(w)$.

\medskip

\item Define $N(w):=N(T_-(w))=N(T_+(w))$ (as above), and $N_D(w):=N(D(w))$ and
$N_A(w):=N(A(w))$.

\medskip

\item Define $j(w)$ to be the
number of the first exposed caret of $T_+(w)$.

\end{itemize}
\end{notation}

To understand the good edges, and later the definition of the partial order
on the edges, one must first understand explicitly how the tree pair diagram
for $w$ may change when $w$ is multiplied by $x_1^{-1}$.  To form the product
$wx_1^{-1}$, if $T_-(w)$ contains at least 3 right carets, then
no carets must be added to
the trees of the reduced pair diagram for $w$, but the subtrees
$A(w)$, $B(w)$, $C(w)$, and $D(w)$ are appended to the leaves
numbered 0, 1, 2, and 3, respectively, of the trees in the reduced
tree pair diagram for $x_1^{-1}$, which is given in Figure \ref{fig:generators}.

Continuing the case that $T_-(w)$ contains at least 3 right carets,
let $T_-'$ be the negative tree of the intermediate step in the
multiplication, $wx_1^{-1}$ before any carets are removed to reduce the tree pair diagram. The
carets of $T_-(w)$ and $T_-'$ with the same number have the same type (left, right or interior) in both
trees, with the exception of the caret
numbered $N_A(w)+1+N_B(w)+1$, which is a right caret in $T_-(w)$
and an interior caret in $T_-'$. This is the only caret that
can be exposed in $T_-'$ but not in $T_-(w)$, and hence the only
caret that might be removed if $T_-'$ is not reduced. As a consequence,
a caret must be
removed in the multiplication $wx_1^{-1}$ if and only if the
following property holds:

\begin{description}
\item[$(\ddagger)$]  $\quad B(w) = \emptyset$, $C(w) = \emptyset$,
and $T_+(w)$ has an exposed caret at caret number $N_A(w)+2$.
\end{description}

We are now ready to describe edges of the form $e_1(w)$ that are good edges.

\begin{lemma}\label{x1generator}
Let $w\in F$. Then $e_1(w)$ is a good edge if any of the following are satisfied:
\begin{enumerate}
\item $T_-(w)$ has at most two right carets.
\item $T_-(w)$ has 3
or more right carets, property $(\ddagger)$ is not satisfied and $D(w)=R_k$ for
some $k \geq 0$.
\item $T_-(w)$ has 3 or more right carets but no
interior carets, property $(\ddagger)$ holds and the number $n$ of
the caret that cancels satisfies
$n=N_A(w)+2 =j(w)$, so caret $n$ is the first exposed caret in $T_+(w)$.
\end{enumerate}
\end{lemma}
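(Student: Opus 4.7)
The plan is to proceed as in Lemma~\ref{x0generator}: in each of the three cases I will use the transformation rules for tree pair diagrams under right multiplication by $x_1^{-1}$ to determine $T_-(wx_1^{-1})$ and $T_+(wx_1^{-1})$, then apply the construction rules for nested traversal normal forms to compare $\eta(w)$ and $\eta(wx_1^{-1})$. The aim in each case is to show that the word $\eta(w) x_1^{-1}$ freely reduces to $\eta(wx_1^{-1})$, i.e., either $\eta(wx_1^{-1}) = \eta(w) x_1^{-1}$ (when no cancellation occurs) or $\eta(w) = \eta(wx_1^{-1}) x_1$ (when a single cancellation occurs). Either conclusion makes the loop $\Psi_w e_1(w) \Psi_{wx_1^{-1}}^{-1}$ trivially reducible in the Cayley graph, so $e_1(w)$ is a good edge.

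For Case 1 (at most two right carets in $T_-(w)$), additional carets must be added to both trees of the diagram for $w$ before the multiplication can be carried out, to reach a representative matching $T_-(x_1^{-1})$. I will split into the sub-cases of one right caret (just the root, with left subtree $A(w)$) and two right carets (with left subtrees $A(w)$ and $B(w)$); in each, I identify the enlarged pair explicitly and apply the rules of Section~\ref{NTNF's} to compute the new contributions to $\eta((wx_1^{-1})_n)$, verifying that they net exactly one new $x_1^{-1}$ beyond $\eta(w_n)$, while the corresponding changes on the positive side leave $\eta(w_p)$ unchanged (the appended carets in $T_+(w)$ either are right carets contributing nothing by rule~(6) or have their contributions cancel appropriately).

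For Case 2 (at least three right carets, no reduction, $D(w)=R_k$), no carets are added or removed, so $T_+(wx_1^{-1}) = T_+(w)$ and $\eta(w_p)$ is preserved. The only change is that the third right caret of $T_-(w)$, which contributed nothing to $\eta(w_n)$ by rule~(6) (since $D(w) = R_k$ contains no interior carets), becomes an interior caret in $T_-(wx_1^{-1})$ with left subtree $C(w)$ and right subtree $R_k$. Its new contribution, computed by rule~(4) when $k=0$ or by rule~(3) with $\gamma_{R_k}$ empty when $k>0$, reduces to $x_1^{-1}$, so $\eta(wx_1^{-1}) = \eta(w) x_1^{-1}$ up to free reduction. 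Case~3 is the most intricate because a caret is removed, and both the negative-side and positive-side changes must be tracked simultaneously. The no-interior-carets hypothesis forces $A(w) = L_m$ with $m=N_A(w)$ and $D(w) = R_k$, so $\eta(w_n) = x_0^{-m}$ is a simple word. The hypothesis $n = N_A(w) + 2 = j(w)$ identifies the removed caret as the first exposed caret in $T_+(w)$, located at position $m+2$. I plan to examine the structure of the carets of $T_+(w)$ at positions $1$ through $m+1$, determine via the construction rules the precise contribution of the exposed caret $j(w)$ to $\eta(w_p^{-1})$, and show that its removal from $T_+(w)$ produces a controlled change in $\eta((wx_1^{-1})_p)$; combining this with the $x_1^{-1}$ gained on the negative side as in Case~2 should yield $\eta(wx_1^{-1}) = \eta(w) x_1^{-1}$ after free reduction.

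The hardest part will be Case~3, where I need to pinpoint the exact contribution of the caret at position $j(w) = N_A(w)+2$ to $\eta(w_p^{-1})$ and verify that its removal interacts correctly with the simultaneous structural change on the negative side to produce the desired identity. The two hypotheses that make this tractable are $j(w) = N_A(w)+2$, which tightly restricts the types of the carets of $T_+(w)$ with smaller infix number, and the absence of interior carets in $T_-(w)$, which reduces $\eta(w_n)$ to a pure power of $x_0^{-1}$ and eliminates potential interference from recursive $\gamma_T$ contributions.
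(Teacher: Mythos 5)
Your overall strategy---compare $\eta(w)$ with $\eta(wx_1^{-1})$ contribution by contribution and show that the word $\eta(w)x_1^{-1}$ freely reduces to $\eta(wx_1^{-1})$---is exactly the paper's, and your Case 1 matches the paper's argument. However, your Case 2 misidentifies the tree mechanics. Under right multiplication by $x_1^{-1}$ the caret of $T_-(w)$ that changes type is the \emph{second} right caret, numbered $N_A(w)+N_B(w)+2$: it goes from being a right caret with left subtree $B(w)$ to an interior caret with left subtree $B(w)$ and right subtree $C(w)$, while the caret occupying the third right caret's infix position remains a right caret (now with right subtree $D(w)$ and an enlarged left subtree). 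It is not the third right caret that becomes interior, and its subtrees are not $C(w)$ and $R_k$. The simplest check: for $T_-(w)=R_3$ one has $T_-(wx_1^{-1})=T_-(x_1^{-1})$, whose unique interior caret has infix number $2$, the position of the second right caret of $R_3$. If you run the traversal rules on your (incorrect) picture, the new $x_1^{-1}$ lands inside the block contributed by the second right caret rather than being appended after it, and you do not recover $\eta(wx_1^{-1})=\eta(w)x_1^{-1}$. The conclusion you want is right, but only after the correct identification.

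The more serious gap is in Case 3, where you propose to combine the loss of an $x_1$ on the positive side with ``the $x_1^{-1}$ gained on the negative side as in Case~2.'' In Case 3 there is no such gain: property $(\ddagger)$ holds precisely so that the caret which would have become interior in the negative tree is cancelled during reduction. Since $T_-(w)$ has no interior carets, neither does $T_-(wx_1^{-1})$, and the negative parts of the two normal forms are \emph{identical}, both equal to $x_0^{-N_A(w)}$. The entire difference lives in the positive part: caret $n=j(w)$ of $T_+(w)$ contributes an $x_1^{-1}$ to $\eta(w_p^{-1})$ which disappears when that caret is deleted, and this is what produces $\eta(w)=\eta(wx_1^{-1})x_1$. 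Double-counting a negative-side $x_1^{-1}$ together with the positive-side loss will not close the computation. Moreover, verifying that deleting caret $n$ from $T_+(w)$ changes only that one contribution requires a genuine subcase analysis (whether $A(w)=\emptyset$; whether caret $n$ is the left or right child of its parent; whether $N(w)=n+1$), because, for instance, when caret $n$ is a right child its deletion converts an $x_0^{-1}x_1^{-1}x_0x_1^{-1}$ block of $\eta(w_p^{-1})$ into a single $x_1^{-1}$ rather than simply erasing one letter, and one must also confirm that the carets preceding $n$ retain interior carets in their right subtrees so their contributions are unchanged. These dichotomies are the real content of Case 3 and are missing from your sketch.
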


\begin{proof}
Let $u=wx_1^{-1}$. Again we compare $\eta(w)$ with $\eta(u)$, and
we claim that when $w$ satisfies either of the first two conditions of the hypothesis, then $\eta(u)=\eta(w)x_1^{-1}$. If $w$ satisfies the third condition, then  $\eta(w)=\eta(u)x_1$.
So in all cases it follows immediately that the edge
$e_1(w)$ is good. We proceed by cases according to which hypothesis is satisfied. The first two are straightforward, but for the third we separate into subcases.

\noindent{\it Case 1. $T_-(w)$ has fewer than three right carets.}
Suppose first that $T_-(w)$ has only one right
caret, and $A(w)$ is the left subtree of this root caret.
In multiplying $u=wx_1^{-1}$, two right carets are appended to the
rightmost leaf of each of the trees
for $w$, and $A(w)$ is appended to the leftmost leaf
of the trees for $x_1^{-1}$, but no carets are removed.
The two appended right carets in $T_+(u)$ contribute nothing
to $\eta(u_p^{-1})$, so $\eta(u_p)=\eta(w_p)$.
The root caret of $T_-(u)$ has left subtree $A(w)$,
and the right child of the root has left subtree consisting
of a single interior caret which contributes $x_1^{-1}$
to the nested traversal normal form $\eta(u_n)$.
Then $\eta(u_p)\eta(u_n)=\eta(w_p) (\eta(w_n) x_1^{-1})$, so it follows that $\eta(u)=\eta(w)x_1^{-1}$.  The proof in the case that
$T_-(w)$ has two right carets is similar.

For the remainder of the proof, assume that $T_-(w)$ has at least
three right carets.
Let $\gamma_A$, $\gamma_r$, $\gamma_B$, $\gamma_C$, and $\gamma_D$
be the subwords of the nested traversal normal form $\eta(w_n)$
corresponding to the carets of $A(w)$, the root, $B(w)$, $C(w)$,
and $D(w)$, respectively.  In this case
the nested traversal normal form for $w$
is then
\[
\eta(w_p)\eta(w_n)=\left\{
\begin{array}{ll}
\eta(w_p)\gamma_A \gamma_r \gamma_B &
\mbox{if $C(w) = \emptyset$ and $D(w) = R_k$ for some $k \ge 0$} \\
\eta(w_p)\gamma_A \gamma_r \gamma_B x_0^{-1} \gamma_C x_0&
\mbox{if $C(w) \neq \emptyset$ and $D(w) = R_k$ for some $k \ge 0$} \\
\eta(w_p)\gamma_A \gamma_r \gamma_B x_0^{-2} \gamma_D x_0^2&
\mbox{if $C(w) = \emptyset$ and $D(w) \neq R_k$ for all $k \ge 0$} \\
\eta(w_p)\gamma_A \gamma_r \gamma_B x_0^{-1}
  \gamma_C x_0^{-1} \gamma_D x_0^2&
\mbox{if $C(w) \neq \emptyset$ and $D(w) \neq R_k$ for all $k \ge
0$.}
\end{array}\right. \]

\noindent{\it Case 2. No carets must be removed to create the
reduced tree pair diagram for $u=wx_1^{-1}$, and $D(w)=R_k$ for some $k \geq
0$.} It follows immediately that $T_+(w)=T_+(u)$ and hence
$\eta(w_p)=\eta(u_p)$. From the discussion of $T_-(w)$ and
$T_-'=T_-(u)$ above, only caret number $N_A(w)+2+N_B(w)$ makes a
different contribution to the respective nested traversal normal
forms, yielding:

\[
\eta(u_p)\eta(u_n)=\left\{
\begin{array}{ll}
\eta(w_p)\gamma_A \gamma_r \gamma_B x_1^{-1}&
\mbox{if $C(w) = \emptyset$ and $D(w) = R_k$ for some $k \ge 0$} \\
\eta(w_p)\gamma_A \gamma_r \gamma_B x_0^{-1} \gamma_C x_0
x_1^{-1}&
\mbox{if $C(w) \neq \emptyset$ and $D(w) = R_k$ for some $k \ge 0$.} \\

\end{array}\right. \]

Comparing these words with the corresponding words $\eta(w_p)\eta(w_n)$ given above
yields $\eta(u)=\eta(w)x_1^{-1}$.

\noindent{\it Case 3.  Caret $n$ is removed when we form the product
$wx_1^{-1}$ (equivalently, property $(\ddagger)$ holds), $j(w)=n$, and
$T_-(w)$ has no interior carets.} From $(\ddagger)$, this caret necessarily
has caret number $n=N_A(w)+2$. As $T_-(w)$ has no interior carets in Case 3, we must have
$A(w)=L_{n-2}$, the tree with $n-2$ left carets, where $n-2 \ge 0$ and $D(w)=R_k$ for some
$k \ge 0$. Then $\eta(w_n)=x_0^{-(n-2)}$.  When caret $n$ is removed to form the
 tree pair diagram for $u$,
we see that $T_-(u)$ then has $n-1$ left carets including the root and $k+1$
right non-root carets,
and so $\eta(u_n)=x_0^{-(n-2)}$ as well.

Note that $N(w) \ge N_A(w)+3=n+1$, and so caret $n$ of $T_+(w)$ is
neither the first nor the last caret of this tree. Then this
is an interior caret of $T_+(w)$ which is an exposed caret, in particular it has an empty
right subtree. This caret will contribute $x_1^{-1}$ to the nested
traversal normal form $\eta(w_p^{-1})$. The tree $T_+(u)$ is the
tree $T_+(w)$ with caret $n$ removed.

For $1 \le j \le N(w)$, let ${\mathcal C}_j$ denote caret $j$ of
the tree $T_+(w)$. Caret ${\mathcal C}_1$ contributes nothing to
the nested traversal normal form $\eta(w_p^{-1})$. Whenever $2 \le
j < n-1$, the unexposed caret ${\mathcal C}_j$ is either an
interior caret or a right caret, and in both cases ${\mathcal C}_j$
has a nonempty right subtree containing the interior caret
${\mathcal C}_{n}$. Hence each of these carets ${\mathcal C}_j$
adds $x_0^{-1}$ to $\eta(w_p^{-1})$ before the subword $x_1^{-1}$
corresponding to caret ${\mathcal C}_{n}$, and also adds either
$x_0x_1^{-1}$ or $x_0$ to $\eta(w_p^{-1})$ after this subword.
Then $\eta(w_p^{-1}) = x_0^{-(n-2)}x_1^{-1} \beta$ for some word
$\beta$, and hence
$\eta(w_p)\eta(w_n)=(\beta^{-1}x_1x_0^{n-2})(x_0^{-(n-2)})$, so $\eta(w)=\beta^{-1}x_1$.

To analyze the nested traversal normal forms $\eta(u_p^{-1})$ and
$\eta(u)$ further, we now divide into four subcases, as follows.

\noindent {\it Case 3a. Suppose $A(w)=\emptyset$.} Then it follows
that $n=2$, and
the tree $T_+(u)$ is
$T_+(w)$ with caret $n=2$ removed.  Hence $\eta(u)=\beta^{-1}$, and so $\eta(w)=\eta(u)x_1$.

\noindent {\it Case 3b. Suppose $A(w) \neq \emptyset$, caret
$n$ is the left child of its parent caret in $T_+(w)$, and $N(w)=
n+1$.} Then it follows that all other carets of $T_+(w)$ are right
carets, or else a caret with infix number less than $n$ would be
the first exposed caret in $T_+(w)$.  Thus
$\eta(w_p)\eta(w_n)=x_0^{-(n-2)}x_1x_0^{n-2}x_0^{-(n-2)}$. The tree $T_+(u)$
contains only right carets, and so $\eta(u_p)\eta(u_n)=(1)x_0^{-(n-2)}$. Therefore 
the nested traversal normal form for $w$ is
$\eta(w)=x_0^{-(n-2)}x_1=\eta(u)x_1$ and
so $\eta(w)=\eta(u)x_1$.

\noindent {\it Case 3c. Suppose that $A(w) \neq \emptyset$, caret
$n$ is the left child of its parent caret in $T_+(w)$, and
$N(w)>n+1$.} Caret ${\mathcal C}_{n+1}$ is the parent of caret
${\mathcal C}_n$ in this case. If ${\mathcal C}_{n+1}$ is an
interior caret of $T_+(w)$, then ${\mathcal C}_{n+1}$ is an
interior caret contained in the right subtree of carets ${\mathcal
C}_j$ for all $2 \le j \le n-1$, and so in the tree $T_+(u)$,
these carets ${\mathcal C}_j$ also contain an interior caret in
their right subtrees. If instead ${\mathcal C}_{n+1}$ is a right
caret, then ${\mathcal C}_j$ is a right caret for all $1 \le j \le
n-1$. Note that the final caret $N(T_-(w))$ of $T_-(w)$ is exposed, and
the tree pair $(T_-(w),T_+(w))$ is reduced, so caret number
$N(T_+(w))>n+1$ of $T_+(w)$ is not exposed.  Then the left subtree
of the latter caret contains an interior caret ${\mathcal C}_i$ of
$T_+(w)$ with $i>n$, and hence this interior caret is contained in
the right subtrees of all of the carets ${\mathcal C}_j$ with $2
\le j \le n-1$. Then for both types of parent caret the nested
traversal path for $u_p^{-1}$ is the same as that for $w_p^{-1}$
except that the $x_1^{-1}$ subword corresponding to caret $n$ is
removed. Then $\eta(u_p)\eta(u_n)=\beta^{-1} x_0^{n-2} x_0^{-(n-2)}$, and so once again $\eta(w)=\eta(u)x_1$.

\noindent {\it Case 3d. Suppose that $A(w) \neq \emptyset$, and
caret $n$ is the right child of its parent in $T_+(w)$.}  Since $n
\geq 3$, $N(T_+(w)) \ge n+1$, and caret ${\mathcal C}_n$ is the
first exposed caret in $T_+(w)$, then caret ${\mathcal C}_{n-1}$
must be an interior caret in $T_+(w)$, which is contained in the
right subtree of each ${\mathcal C}_j$ with $2 \le j \le n-2$.
Then the nested traversal path for $u_p^{-1}$ is the same as that
for $w_p^{-1}$ except that the $x_0^{-1}x_1^{-1}x_0x_1^{-1}$
subword of $\eta(w_p^{-1})$ corresponding to carets ${\mathcal
C}_{n-1}$ and ${\mathcal C}_{n}$ is replaced with the word
$x_1^{-1}$ corresponding to the caret ${\mathcal C}_{n-1}$ of
$T_+(u)$. In this case
$\eta(w_p)\eta(w_n)=(\alpha^{-1}x_1x_0^{-1}x_1x_0^{n-2})(x_0^{-(n-2)})$,
(where $\beta=x_0x_1^{-1}\alpha$), and
$\eta(u_p)\eta(u_n)=(\alpha^{-1}x_1x_0^{n-3})(x_0^{-(n-2)})$. Therefore, $\eta(w)=\alpha^{-1}x_1x_0^{-1}x_1=\eta(u)x_1$.
\end{proof}

Lemmas \ref{x0generator} and \ref{x1generator} complete the proof
of Theorem \ref{goodedges}. In the next sections, we will most
frequently apply the contrapositive of Theorem \ref{goodedges},
rewritten below following Notation \ref{not:4trees}.

\begin{corollary}\label{newbad}
Let $w \in F$. If $e_a(w)$ is a bad edge, then $a=1$, the tree
$T_-(w)$ has at least 3 right carets, and either
\begin{enumerate}
\item $D(w)\neq R_{N_D(w)}$.
\item $D(w)= R_{N_D(w)}$, $A(w) = L_{N_A(w)}$, property $(\ddagger)$ holds,
  and $2 \leq j(w)\leq N_A(w)$.
\item $D(w)= R_{N_D(w)}$, $A(w) \neq L_{N_A(w)}$, and
property $(\ddagger)$ holds.
\end{enumerate}
\end{corollary}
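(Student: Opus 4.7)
The plan is to derive Corollary \ref{newbad} as the contrapositive of Theorem \ref{goodedges}, translated into the language of Notation \ref{not:4trees}. If $e_a(w)$ is bad then each of the four conditions of Theorem \ref{goodedges} must fail: the failure of (1) gives $a=1$, and the failure of (2) gives that $T_-(w)$ has at least three right carets. To rewrite the negations of (3) and (4), I will use two translations. First, the carets of $T_-(w)$ that infix-follow the third right caret are exactly the carets of its right subtree $D(w)$, so ``all carets following the third right caret are right carets'' is precisely $D(w) = R_{N_D(w)}$. Second, the discussion just after Notation \ref{not:4trees} tells us that a caret is removed during the multiplication $w \cdot x_1^{-1}$ if and only if property $(\ddagger)$ holds, and when it does the removed caret has infix number $n = N_A(w) + 2$.

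With those rewrites, the negation of (3) reads ``$D(w) \neq R_{N_D(w)}$ or $(\ddagger)$ holds,'' while the negation of (4) reads ``$T_-(w)$ has an interior caret, or $(\ddagger)$ fails, or $j(w) \neq N_A(w) + 2$.'' I will then split on whether $D(w) = R_{N_D(w)}$. If not, we are in case (1) of the corollary, and I note that any non-right caret of $D(w)$ is automatically an interior caret of $T_-(w)$, so the negation of (4) holds for free. Otherwise the negation of (3) forces $(\ddagger)$, so $B(w) = C(w) = \emptyset$, and a quick inspection of which carets of $T_-(w)$ lie on its left or right side shows that $T_-(w)$ has an interior caret iff $A(w) \neq L_{N_A(w)}$. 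The subcase $A(w) \neq L_{N_A(w)}$ gives case (3) with the negation of (4) automatic; the subcase $A(w) = L_{N_A(w)}$ gives case (2), where the negation of (4) becomes $j(w) \neq N_A(w) + 2$.

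The main obstacle, if anything, is proving the tighter inequality $2 \le j(w) \le N_A(w)$ in case (2). Property $(\ddagger)$ asserts that caret $N_A(w) + 2$ is exposed in $T_+(w)$, giving $j(w) \le N_A(w) + 2$ a priori. Two carets adjacent in infix order have exposed leaves sharing a common leaf label and hence cannot both be exposed; applied to carets $N_A(w) + 1$ and $N_A(w) + 2$ of $T_+(w)$ this rules out $j(w) = N_A(w) + 1$, and combining with the failure of (4) yields $j(w) \le N_A(w)$. For the lower bound, if $j(w) = 1$ and $N_A(w) \ge 1$, then the bottom of the chain $A(w) = L_{N_A(w)}$ is an exposed caret numbered 1 in $T_-(w)$ as well, contradicting reducedness of the tree pair diagram $(T_-(w),T_+(w))$; and if $N_A(w) = 0$ then $n = 2$, so the same adjacency argument applied to carets 1 and 2 of $T_+(w)$ forces $j(w) \neq 1$. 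In either case $j(w) \ge 2$, which completes the case analysis and the proof.
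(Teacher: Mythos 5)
Your proposal is correct and follows essentially the same route as the paper: take the contrapositive of Theorem \ref{goodedges}, split on whether $D(w)=R_{N_D(w)}$ and whether $(\ddagger)$ holds, and pin down $2 \le j(w) \le N_A(w)$ via the fact that two infix-consecutive carets cannot both be exposed together with reducedness of the tree pair diagram. If anything, your handling of the lower bound $j(w)\ge 2$ is slightly more careful than the paper's (which tacitly assumes $N_A(w)\ge 1$ when asserting caret $1$ of $T_-(w)$ is exposed), but the argument is the same.
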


\begin{proof}
From Theorem \ref{goodedges} parts (1) and (2) we know that $a=1$
and $T_-(w)$ contains at least 3 right carets.

If no caret is removed in the multiplication $wx_1^{-1}$ (that is, if
property $(\ddagger)$ fails), then
Part (3) of
Theorem \ref{goodedges} shows that we must have
$D(w)\neq R_{N_D(w)}$.

If a caret is removed in the
multiplication $wx_1^{-1}$, then property $(\ddagger)$ holds,
Additionally, if we are not in either of the
cases (1) or (3) of this corollary, then we have $D(w)=R_{N_D(w)}$
and  $A(w) = L_{N_A(w)}$.
In this case, $T_-(w)$ has no interior carets,
so $B(w) = \emptyset$ and
the caret that is canceled in the multiplication is
caret number $N_A(w)+2$, which must be exposed in $T_+(w)$.
It follows from
part (4) of Theorem \ref{goodedges} that this caret is
 not the first exposed caret in
$T_+(w)$, and so $j(w)< N_A(w)+2$. However, two consecutive carets cannot
be exposed, and we conclude that $j(w) \leq N_A(w)$. Furthermore, if
$j(w)=1$, then caret 1 would be exposed in both $T_-(w)$ and
$T_+(w)$ and the tree pair diagram would not be reduced. Hence, $2
\leq j(w) \leq N_A(w)$, and case (2) of the corollary holds.
\end{proof}


\subsection{Defining a partial order on the bad edges}

We now define a partial order on the set of all bad edges $e_1(w)$
as required for Theorem$~\ref{combingextends}$. This partial order
is based on numerical measures related to the tree pair diagram
for $w$. These include $N(w)$, as well as $N_A(w)$ and $N_D(w)$,
the number of carets in the subtrees $A(w)$ and $D(w)$ defined in
Notation \ref{not:4trees} above. To order the edges $e_1(w)$ and $e_1(w')$ where the values
$N_A$ and $N_D$ are the same for both elements, we first need to construct, for
each fixed number $k$, several different partial orderings of the
set of all rooted binary trees with $k$ carets. Before explaining these posets, we
first need some additional combinatorial information associated to
a rooted binary tree.

\begin{definition}\label{generalTinfo}
Let $T$ be a rooted, binary tree.
\begin{itemize}
\item We order the right carets of $T$ in infix order, and call
them $r_1, r_2, r_3, \ldots, r_{k}$, where $r_1$ is the root caret
of $T$. Let $T_i$ be the (possibly empty) left subtree of caret
$r_i$. Let $s_r(T):=i$, where $i$ is the smallest index with $0
\leq i \leq k$, with the property that for every $i < t \leq k$,
$T_t$ is empty.

\medskip

\item Similarly, we call
the left carets of $T$, in infix order, $l_m, l_{m-1}, \dots ,
l_1$, where $l_1$ is the root caret of $T$, and let $S_i$ be the
(possibly empty) right subtree of caret $l_i$. Then let
$s_l(T):=i$, where $i$ is the smallest index , $0 \leq i \leq m$,
such that $S_t$ is empty for every $i<t \leq m$.

\medskip

\item Let
$C_r(T):=N(T)-(k-s_r(T))$ where $k$ is the number of right carets
in $T$; that is, $C_r(T)$ is the number of carets in $T$ up to and
including caret $r_{s_r(T)}$.

\medskip

\item Let $C_l(T):=N(T)-( m-s_l(T))$ where $m$ is the number of
left carets in $T$; that is, $C_l(T)$ is the number of carets in
$T$ after, and including, caret $l_{s_l(T)}$.
\end{itemize}
\end{definition}

We remark that the simple condition of whether a tree
consists either only of right carets or only of left carets, which
was critical in recognizing bad edges in Corollary \ref{newbad},
simply translates into whether $s_r$ or $s_l$ equals
zero. More precisely, the condition $s_r(T)=0$ (respectively
$s_r(T)>0$) is equivalent to $T=R_{N(T)}$ (respectively $T\neq
R_{N(T)}$). Similarly, the condition $s_l(T)=0$ (respectively
$s_l(T)>0$) is equivalent to $T=L_{N(T)}$ (respectively $T\neq
L_{N(T)}$). In order to sort, rather than simply recognize, the
bad edges, however, we need to keep track of the numerical values $s_r$ and
$s_l$.

Consider the set of rooted binary trees with $k$ carets.  We
define the \textbf{right poset of rooted binary trees with $k$
carets} which will be used to order edges $e_1(w)$ where $N_D(w) =
k$. For each tree $D$ with $k$ carets with $s_r(D)
> 0$, we define the tree $f(D)$ as follows:

\begin{itemize}
\item If $s_r(D)$ is odd, and $T_1$, the left subtree of the root
caret of $D$, is empty, $f(D)$ is the tree formed by rotating $D$
to the left at caret $r_1$. That is, if $g$ is the element of $F$ with tree
pair diagram $(D,R_k)$ where $R_k$ is the tree consisting
of $k$ right carets, then $gx_0^{-1}$ has (possibly
unreduced) tree pair diagram
$(f(D),R_k)$.

\medskip

\item If $s_r(D)$ is
odd, and $T_1$ is not empty, $f(D)$ is the tree formed by rotating
$D$ to the right at caret $r_1$. That is, if $g$ is the element of $F$ with
tree pair diagram $(D,R_k)$, then $gx_0$ has tree pair diagram
$(f(D),R_k)$.

\medskip

\item If $s_r(D)$ is even, and $T_2$, the left subtree
of the right child of the root caret of $D$, is empty, $f(D)$ is
the tree formed by rotating $D$ to the left at caret $r_2$. If $g$
is the element of $F$ with tree pair diagram $(D,R_k)$, then
$gx_1^{-1}$ has tree pair diagram $(f(D),R_k)$.

\medskip

\item If $s_r(D)$ is
even, and $T_2$ is not empty, $f(D)$ is the tree formed by
rotating $D$ to the right at caret $r_2$. If $g$ is the element of
$F$ with tree pair diagram $(D,R_k)$, then $gx_1$ has tree pair
diagram $(f(D),R_k)$.
\end{itemize}

Now declare $f(D) <_r D$ for every $D$. We claim that the
transitive closure of this order is a well-founded partial order,
with unique minimal element $R_k$, the tree with $k$ right
carets. To see this, notice that $C_r(D)=0$ if and only if
$D=R_k$. Now $C_r(f(D)) \leq C_r(D)$, and if $C_r(f(D)) = C_r(D)$,
 then $s_r(D)$ and $s_r(f(D))$ have different parities. So
if $f^n(D)=D$ for some positive integer $n$, this implies that
there is a word $x_0^{\pm 1} x_1 ^{\pm 1} \cdots x_0^{\pm1}
x_1^{\pm1}$ (where possibly the first and/or last generators are
absent) which is trivial in $F$, contradicting
Lemma~\ref{consecutiveindices}.

Since there are only a finite number of trees with $k$
carets, $C_r(f^m(D)) < C_r(D)$ for some $m$, and hence
$C_r(f^n(D))=0$ for some $n$. Hence, we see that this is a partial
order with a unique minimal tree $R_k$, which is less than all
other trees in the poset. We denote the order in this poset by
$<_r$.

We now define the \textbf{left posets of rooted binary trees with
$k$ carets}, which will be used to sort bad edges $e_1(w)$ for
which $N_A(w)=k$. Using the method given above, we could have
constructed a poset using $s_l$, $S_i$, and $C_l$ instead of
$s_r$, $T_i$, and $C_r$, replacing the words \lq\lq rotate left "
by \lq\lq rotate right" and vice-versa. This yields a dual poset,
where the minimal element is the tree $L_k$ consisting of only
left carets. We denote relationships in this order by $A_1 <_l
A_2$.

However, in some cases we will need a modification of this left
poset in order to sort our edges, depending on an index $1 \leq j
\leq k$. For any natural numbers $k \ge 3$ and $2 \leq j \leq
k-1$, let $B_j(k)$ be a tree consisting of $k$ carets, none of
which are interior, so that the root caret has infix number $j+1$.
Note that $B_{k-1}(k)=L_k$, the tree consisting of $k$ left
carets. In the left poset with order relation $<_l$, there is a
unique path from each tree to the minimal element
$B_{k-1}(k)=L_k$, and hence there also is a unique (undirected)
path from each tree to $B_j(k)$. For each $2 \leq j\leq k-2$, we
form a new poset, reordering the trees by declaring $A_1 <_l^j
A_2$ if $A_1$ is on the unique path from $A_2$ to $B_j(k)$. For
each such $j$, the new poset now has least element $B_j(k)$, and
whereas $L_k=B_{k-1}(k) <_l B_{k-2}(k)<_l \cdots <_l
B_{j+1}(k)<_lB_j(k)$, exactly the reverse holds in $<_l^j$, namely
$B_{j}(k) <_l^j B_{j+1}(k)<_l^j \cdots <_l^j
B_{k-2}(k)<_l^jB_{k-1}(k)=L_k$. If $j=1,k-1$ or $k$, we use the
original poset, and declare $<_l^j=<_l$. Thus we have constructed
only $k-2$ distinct posets in all, for each $k \geq 3$. In the
trivial cases $k=1$ and $k=2$, simply declare $<_l^j=<_l$ for any
$1\leq j \leq k$.

To summarize: for each natural number $k \geq 3$, we have defined
$k-1$ distinct partial orderings of the set of rooted binary trees
with $k$ carets. There is a unique right poset which has as
minimal element $R_k$, which will be used to sort bad edges $e_1(w)$ with
$N_D(w)=k$; there is a family of $k-2$ distinct left posets which
have, respectively, the trees $B_j(k)$ for $2\leq j \leq k-1$ as
unique minimal elements, which will be used to sort edges with
$N_A(w)=k$ and $j(w)=j$.

The following notation, based on the quantities introduced in
Notation \ref{not:4trees} and Definition \ref{generalTinfo}, will
simplify the description of the ordering.

\begin{notation}\label{not:ss}
Let $e_1(w)$ be a bad edge, for an element $w=(T_-(w),T_+(w)) \in
F$.
\begin{itemize}
\item  Let $s_r(w):=s_r(D(w))$.

\item Let $s_l(w):=s_l(A(w))$. \item Let $C_r(w):=C_r(D(w))$. \item
Let $C_l(w):=C_l(A(w))$.\item Let $n(w)$ be the infix number of the
right caret of $T_-(w)$ whose left subtree is not empty, but whose
right subtree is either empty or consists only of right carets. If
no such caret exists, $T_-(w)$ consists only of right carets, and
we set $n(w)=0$.
\end{itemize}
\end{notation}

\begin{figure}
\begin{center}
\includegraphics[width=2in]{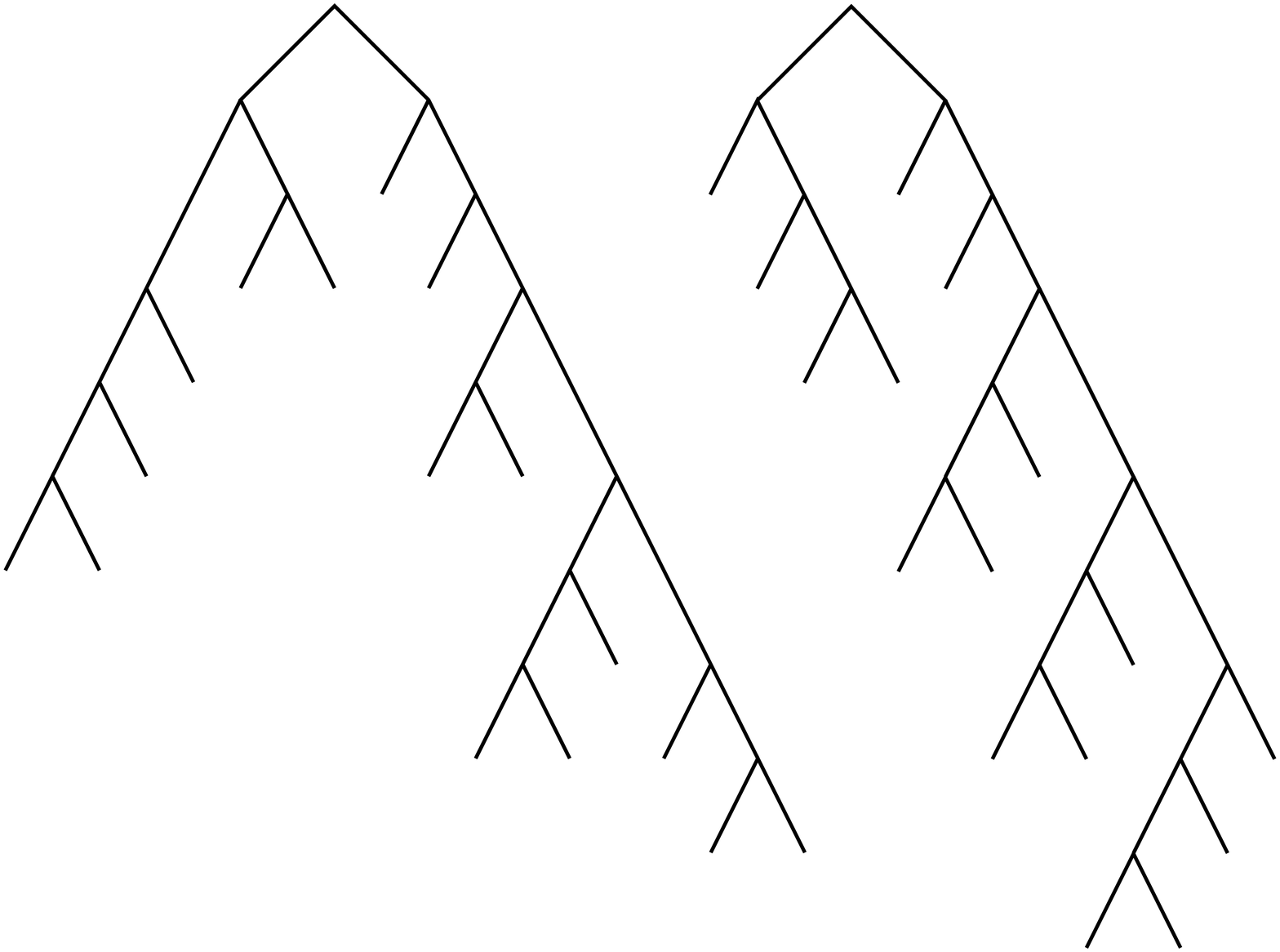}
\caption{In the example of the pair of trees $(T_-(w),T_+(w))$
given above, the subtree $D(w)$ (resp. $A(w)$) has four right
(resp. left) carets.  We compute the following quantities:
$N(w)=15, \ N_D(w)=7, \ N_A(w)=5, \ s_r(w)=2, \ s_l(w) = 1, \
C_r(w) = 5, \ C_l(w)=2, \ n(w) = 13$ and $j(w) = 3$.}
\label{fig:example}
\end{center}
\end{figure}

In the following definition, we define a set of comparisons
between certain pairs of bad edges. We then prove that the
transitive closure of this set of order relationships is a partial
order. Some details of this partial order (particularly the fourth
set of comparisons) may seem mysterious at this point, but they
are exactly the relationships needed for the cell map from the set of bad
edges into the 2-cells which is defined in the next section to satisfy the hypotheses of
Theorem~\ref{combingextends}.

\begin{definition}\label{def:order}
Let $e_1(w)$ and $e_1(z)$ be bad edges. We say $e_1(z)<e_1(w)$ in
the following situations:
\begin{enumerate}
\item If $N(z) < N(w)$.

 \item If $N(z) = N(w)$, $T_+(z)=T_+(w)$, both $s_r(w)>0$
and $s_r(z)>0$, and either:

\begin{enumerate}

\item $N_D(z) < N_D(w)$ and $n(z)=n(w)$, or

\item $N_D(z) = N_D(w)$, $n(z) \leq n(w)$, and $D(z) <_r D(w)$.

\end{enumerate}

\item If $N(z) = N(w)$, $T_+(z)=T_+(w)$, $s_r(z)=s_r(w)=0$, and
either:

\begin{enumerate}

\item  $N_A(z) < N_A(w)$ and $n(z) \leq n(w)$ or

\item  $N_A(w) = N_A(z)$, $n(z)=n(w)$, and $A(z) <_l^j A(w)$ for
$j=j(w)=j(z) \leq N_A(w)$.

\end{enumerate}

\item If $N(z)=N(w)$, $T_+(z)=T_+(w)$, exactly one of the pair
$\{s_r(w), s_r(z) \}$ is zero, and either:
\begin{enumerate} \item $s_r(z)=0$, $s_r(w)=1$ or $2$, and $n(z) < n(w)$, or
\item $s_r(z)=1$, $s_r(w)=0$, $n(z)=n(w)$, and $N_A(z)<
N_A(w)$.
\end{enumerate}

\end{enumerate}
\end{definition}

\begin{lemma}
The transitive closure of the set of order relationships defined
above is a partial order satisfying the property that
for all bad edges
$e$, the set of bad edges less than $e$ with respect to this partial
order is finite.
\end{lemma}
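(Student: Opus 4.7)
The plan is to establish both claims --- antisymmetry and the finiteness condition --- at once by exhibiting a weight function $\Phi$ from the bad edges into a lexicographically ordered tuple of natural numbers with the property that every basic relation in Definition~\ref{def:order} strictly decreases $\Phi$. Any cycle in the transitive closure would force $\Phi(e)<\Phi(e)$, so the transitive closure is antisymmetric and hence a strict partial order. Moreover, since every basic comparison either strictly decreases $N$ or fixes $N$, every bad edge below $e_1(w)$ in the transitive closure has caret count at most $N(w)$; as there are only finitely many reduced tree pair diagrams with bounded caret count, this yields the required finiteness conclusion.

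The weight I would use is the lex tuple
\[
\Phi\bigl(e_1(w)\bigr) \;:=\; \bigl(\,N(w),\ n(w),\ \sigma(w),\ N_D(w),\ \rho_r(D(w)),\ \rho_l(A(w))\,\bigr),
\]
where $\sigma(w)=1$ if $s_r(w)=0$ and $\sigma(w)=0$ otherwise, $\rho_r(D)$ is the length of the longest descending $<_r$-chain ending at $D$ in the right poset of rooted binary trees with $N_D$ carets, and $\rho_l(A)$ is the corresponding rank of $A$ in the left poset $<_l^{\,j(w)}$ of trees with $N_A(w)$ carets (set to $0$ when $s_r(w)>0$, where this coordinate is never reached in the lex comparisons actually performed). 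Because all the relevant posets are finite, each coordinate of $\Phi$ is a nonnegative integer, and a strict inequality in one of the posets gives a strict inequality of the rank.

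The verification is then a routine case check against Definition~\ref{def:order}. Case~(1) strictly decreases $N$. In all remaining cases $N$ and $T_+$ are fixed, so $j(z)=j(w)$, and the left poset used in the final coordinate is the same for the two edges. Case~(2)(a) fixes $n$ and $\sigma=0$ and strictly decreases $N_D$. Case~(2)(b) either strictly decreases $n$, or fixes $n$ and reaches the $\rho_r$ coordinate, where $D(z)<_r D(w)$ forces a strict decrease. For case~(3)(a), Corollary~\ref{newbad} shows that whenever $s_r(w)=0$ we have $B(w)=C(w)=\emptyset$, $D(w)=R_{N_D(w)}$, and $A(w)\ne\emptyset$, so $n(w)=N_A(w)+1$; therefore strictly decreasing $N_A$ strictly decreases $n$. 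Case~(3)(b) fixes $N$, $n$, $\sigma=1$, $N_D=N-N_A-3$, and $\rho_r$ (both $D$'s equal $R_{N_D}$, so both ranks are $0$), and then strictly decreases $\rho_l$. Case~(4)(a) strictly decreases $n$. Finally, case~(4)(b) fixes $n$, but $\sigma(w)=1$ while $\sigma(z)=0$, so the $\sigma$-coordinate strictly drops.

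The main obstacle is not the verification itself but the design of the weight function: one has to insert the $\sigma$ coordinate at exactly the right depth so that the cross-regime comparison in case~(4)(b) --- which pits an edge with $s_r=1$ against one with $s_r=0$ --- slots into the same lex framework as the within-regime comparisons of cases~(2) and~(3), without disturbing those cases where $\sigma$ is constant. Once the tuple is set up this way, the key structural fact $n(w)=N_A(w)+1$ for $s_r(w)=0$ bad edges (extracted from Corollary~\ref{newbad}) is exactly what is needed to make case~(3)(a) fit under the $n$-coordinate rather than requiring a separate mechanism.
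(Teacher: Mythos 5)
Your proof is correct and rests on the same combinatorial facts as the paper's: both arguments exclude cycles by exhibiting a hierarchy of quantities --- $N$ first, then $n$, then the poset ranks of $D$ and $A$ --- that are non-increasing along every basic relation of Definition~\ref{def:order} and must strictly decrease somewhere along any putative cycle, and both obtain finiteness from the fact that $N$ never increases along a basic relation. The only real difference is packaging: the paper runs a direct chain analysis, splitting on whether $s_r$ stays positive, stays zero, or changes value along the chain (a drop from positive to zero forcing a strict decrease in $n$ via case (4a)), whereas you encode the same information in a single explicit lexicographic ranking into $\N^6$, using the identity $n(w)=N_A(w)+1$ in the $s_r=0$ regime (which the paper also records, in Case 2 of the proof of Theorem~\ref{thm:boundaryorder}) to absorb the $N_A$ comparison of case (3a) into the $n$ coordinate, and inserting the indicator $\sigma$ after $n$ so that case (4b) decreases the tuple while case (4a), where $\sigma$ increases, is already decided at the $n$ coordinate. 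Your version is more systematic and easier to audit case by case; the paper's is shorter but leaves the reader to verify that $n$ is non-increasing across all cases with $N$ fixed.
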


\begin{proof}
In order to show this is a partial order, we must show that for
every set of bad edges satisfying $e_1(w_1) > e_1(w_2) > \cdots
> e_1(w_n)$,  $w_1 \neq w_n$. Suppose $e_1(w_1) > e_1(w_2) > \cdots
> e_1(w_n)$. If $N(w_i)$ is not constant for all $i$, then
$N(w_n)<N(w_1)$, and so $w_1\neq w_n$. So we may assume $N=N(w_i)$
for $1 \leq i \leq n$. Next we observe that if $s_r(w_i)>0$ for
every $1 \leq i \leq n$, then for each $i$ either $N_D(w_{i+1}) <
N_D(w_i)$, or else $N_D(w_{i+1}) = N_D(w_i)$ and $D(w_{i+1})<_l
D(w_i)$, so $w_1 \neq w_n$. Similarly, if $s_r(w_i)=0$ for every
$i$, either $N_A(w_{i+1}) < N_A(w_i)$, or else $N_A(w_{i+1}) =
N_A(w_i)$ and $A(w_{i+1})<_l^j A(w_i)$ for $j=j(w_i)=j(w_{i+1})$,
so $w_1 \neq w_n$. Therefore, if $w_1=w_n$ the value of the
variable $s_r$ must change twice in the sequence of edges between
a strictly positive value and 0. Thus there must be indices for
which the value of $s_r$ increases from 0 to a (strictly) positive
number and for which the value decreases from positive to 0. In
particular, there must be some index $i$ for which $s_r(D(w_i))=1$
or 2, $s_r(D(w_{i+1}))=0$, and $n(w_{i+1}) < n(w_i)$. But since
for every index $j$ we have $n(w_{j+1}) \leq n(w_j)$, then
$n(w_n)<n(w_1)$, and hence $w_1 \neq w_n$. Finally, since the
subset of all edges $e_1(w)$ with a fixed value of $N(w)$ is
finite, the finiteness condition is satisfied and this partial
order is well-founded.
\end{proof}


\subsection{The mapping from the set of bad edges to the set of
$2$-cells in the Cayley complex}

In this section we define a mapping $c$ from the set of bad edges
to the set of 2-cells in the Cayley complex.
We will set up the map $c$ so that the bad edge $e_1(w)$ is on
the boundary of
the cell $c(e_1(w))$.

In order to specify
this mapping, we will first define notation
for 2-cells in the Cayley complex
with a specified basepoint and orientation.
For each vertex $w$ and edge $e_1(w)$ in the
Cayley complex, there are eight 2-cells containing
this edge in their boundaries.  For four of
these 2-cells, there are 10 edges on the boundary;
these are the 2-cells labeled $Rr_1^{\pm 1}(w)$ and
$Rl_1^{\pm 1}(w)$ in Figure \ref{fig:relators1}.
For the other four 2-cells whose boundaries contain
$e_1(w)$, there are 14 boundary edges;
these are the 2-cells labeled $Rr_2^{\pm 1}(w)$ and
$Rl_2^{\pm 1}(w)$ in Figure \ref{fig:relators2}.

In each of these 2-cells, in addition to $e_1(w)$ the boundary
contains three other edges
of the form $e_1(v)$ for some $v \in F$, and none of the
$e_1$ edges in the boundary of a particular $2$-cell are adjacent.
The edge $e_1(w)$ will be referred to as the top
$e_1$ edge in these eight 2-cells.
The $e_1$ edges closest to $w$ and $wx_1^{-1}$ are the left and
right side edges $e_1(z_l)$ and $e_1(z_r)$, respectively,
and the last $e_1$ edge is the bottom edge $e_1(z_b)$.

For a bad edge $e_1(w)$, the 2-cell $c(e_1(w))$ must be chosen from
among these eight cells.
The map will be defined so that $z_b$ can be represented by a (not
necessarily reduced) tree pair diagram $(T'_-(z_b), T'_+(z_b))$, where the
negative trees $T_-(w)$ and $T'_-(z_b)$ differ by a single rotation at a
particular caret, and the positive trees satisfy $T_+(w)=T'_+(z_b)$.
The notation $Ra_n^{\pm1}(w)$ (where $R$ stands for relator,)
has been motivated by this.  The letter
$a=l$ or $a=r$ depends on whether the rotation needed to
transform $T_-(w)$ to $T'_-(z_b)$ takes place at a left or right
caret of $T_-(w)$. The superscript $\pm 1$ takes into account the
direction of this rotation, and the subscript $n$ specifies at
which caret the rotation takes place. More specifically, in the
case of a rotation at a left caret, $n=1$ means this caret is the
left child of the root of $T_-(w)$, while $n=2$ means rotation is
at the left child of the left child of the root. In the case of a
rotation at a right caret, if caret $m$ is the right child of the
right child of the root of $T_-(w)$, then $n=1$ means rotating at
the right child of caret $m$, and $n=2$ means rotating at the
right child of the right child of caret $m$.

\begin{figure}[ht]
\begin{center}
\begin{tikzpicture}[>=triangle 45,scale=.8]
\draw[->] (0,0)--(1,0); \draw (1,0)--(2,0); \draw[->]
(2,0)--(3,0); \draw (3,0)--(4,0); \draw (4,0)--(5,0); \draw[<-]
(5,0)--(6,0); \draw (6,0)--(7,0); \draw[<-] (7,0)--(8,0); \fill
(0,0) circle (2pt); \fill (2,0) circle (2pt); \fill (4,0) circle
(2pt); \fill (6,0) circle (2pt); \fill (8,0) circle (2pt);
\draw[->] (0,2)--(1,2); \draw (1,2)--(2,2); \draw[->]
(2,2)--(3,2); \draw (3,2)--(4,2); \draw[very thick] (4,2)--(5,2);
\draw[<-, very thick] (5,2)--(6,2); \draw (6,2)--(7,2); \draw[<-]
(7,2)--(8,2); \fill (0,2) circle (2pt); \fill (2,2) circle (2pt);
\fill (4,2) circle (2pt); \fill (6,2) circle (2pt); \fill (8,2)
circle (2pt); \draw[->] (0,0)--(0,1); \draw(0,1)--(0,2); \draw[->]
(8,0)--(8,1); \draw (8,1)--(8,2);

\draw (1,0) node [anchor=north]{$x_0$}; \draw (3,0) node
[anchor=north]{$x_0$}; \draw (5,0) node [anchor=north]{$x_1$};
\draw (7,0) node [anchor=north]{$x_0$}; \draw (1,2) node
[anchor=south]{$x_0$}; \draw (3,2) node [anchor=south]{$x_0$};
\draw (5,2) node [anchor=south]{$x_1$}; \draw (7,2) node
[anchor=south]{$x_0$}; \draw (0,1) node [anchor=east]{$x_1$};
\draw (8,1) node [anchor=west]{$x_1$}; \draw (4,1.5) node
[anchor=north] {\textbf{$Rr_1(w)$}}; \draw (4,0) node
[anchor=north] {$z_b$}; \draw[<-] (4,2.2)--(4,3); \draw (4,3)
node[anchor=south]{$w$}; \draw (0,2) node [anchor=east]{$z_l$};
\draw (8,2) node [anchor=west]{$z_r$};


\draw[->] (10,0)--(11,0); \draw (11,0)--(12,0); \draw[->]
(12,0)--(13,0); \draw (13,0)--(14,0); \draw (14,0)--(15,0);
\draw[<-] (15,0)--(16,0); \draw[-<] (16,0)--(17,0); \draw
(17,0)--(18,0);

\fill (10,0) circle (2pt); \fill (12,0) circle (2pt); \fill (14,0)
circle (2pt); \fill (16,0) circle (2pt); \fill (18,0) circle
(2pt); \draw[->] (10,2)--(11,2); \draw (11,2)--(12,2); \draw[->]
(12,2)--(13,2); \draw (13,2)--(14,2); \draw[very thick]
(14,2)--(15,2); \draw[<-, very thick] (15,2)--(16,2); \draw[-<]
(16,2)--(17,2); \draw (17,2)--(18,2); \fill (10,2) circle (2pt);
\fill (12,2) circle (2pt); \fill (14,2) circle (2pt); \fill (16,2)
circle (2pt); \fill (18,2) circle (2pt); \draw (10,0)--(10,1);
\draw[<-](10,1)--(10,2); \draw (18,0)--(18,1); \draw[<-]
(18,1)--(18,2);

\draw (11,0) node [anchor=north]{$x_0$}; \draw (13,0) node
[anchor=north]{$x_0$}; \draw (15,0) node [anchor=north]{$x_1$};
\draw (17,0) node [anchor=north]{$x_0$}; \draw (11,2) node
[anchor=south]{$x_0$}; \draw (13,2) node [anchor=south]{$x_0$};
\draw (15,2) node [anchor=south]{$x_1$}; \draw (17,2) node
[anchor=south]{$x_0$}; \draw (10,1) node [anchor=east]{$x_1$};
\draw (18,1) node [anchor=west]{$x_1$}; \draw (14,1.5) node
[anchor=north] {\textbf{$Rr_1^{-1}(w)$}}; \draw (14,0) node
[anchor=north] {$z_b$}; \draw[<-] (14,2.2)--(14,3); \draw (14,3)
node[anchor=south]{$w$}; \draw (10,0) node [anchor=east]{$z_l$};
\draw (18,0) node [anchor=west]{$z_r$};
\end{tikzpicture}

\begin{tikzpicture}[>=triangle 45,scale=.8]
\draw[-<] (0,0)--(1.33,0); \draw (1.33,0)--(2.66,0); \draw[-<]
(2.66,0)--(4,0); \draw (4,0)--(5.33,0); \draw[->]
(5.33,0)--(6.66,0); \draw (6.66,0)--(8,0);  \fill (0,0) circle
(2pt); \fill (2.66,0) circle (2pt); \fill (5.33,0) circle (2pt);
\fill (8,0) circle (2pt);  \draw[-<] (0,2)--(.75,2); \draw
(.75,2)--(1.5,2); \draw[-<] (1.5,2)--(2.25,2); \draw
(2.25,2)--(3,2); \draw[-<,very thick] (3,2)--(4,2); \draw[very
thick] (3.75,2)--(5,2); \draw[->] (5,2)--(5.75,2); \draw
(5.75,2)--(6.5,2); \draw[->] (6.5,2)--(7.25,2); \draw
(7.25,2)--(8,2); \fill (0,2) circle (2pt); \fill (1.5,2) circle
(2pt); \fill (3,2) circle (2pt); \fill (5,2) circle (2pt); \fill
(6.5,2) circle (2pt); \fill (8,2) circle (2pt);  \draw[->]
(0,0)--(0,1); \draw(0,1)--(0,2); \draw[->] (8,0)--(8,1); \draw
(8,1)--(8,2);

\draw (1.33,0) node [anchor=north]{$x_0$}; \draw (4,0) node
[anchor=north]{$x_1$}; \draw (6.66,0) node [anchor=north]{$x_0$};
 \draw (.75,2) node
[anchor=south]{$x_0$}; \draw (2.25,2) node [anchor=south]{$x_0$};
\draw (4,2) node [anchor=south]{$x_1$}; \draw (5.75,2) node
[anchor=south]{$x_0$}; \draw (7.25,2) node [anchor=south]{$x_0$};
\draw (0,1) node [anchor=east]{$x_1$}; \draw (8,1) node
[anchor=west]{$x_1$}; \draw (4,1.5) node [anchor=north]
{\textbf{$Rl_1(w)$}}; \draw (2.66,0) node [anchor=north] {$z_b$};
\draw[<-] (3,2.2)--(3,3); \draw (3,3) node[anchor=south]{$w$};
\draw (0,2) node [anchor=east]{$z_l$}; \draw (8,2) node
[anchor=west]{$z_r$};

\draw[-<] (10,2)--(11.33,2); \draw (11.33,2)--(12.66,2);
\draw[very thick] (12.66,2)--(14,2); \draw[<-,very thick]
(14,2)--(15.33,2); \draw[->] (15.33,2)--(16.66,2); \draw
(16.66,2)--(18,2);  \fill (10,2) circle (2pt); \fill (12.66,2)
circle (2pt); \fill (15.33,2) circle (2pt); \fill (18,2) circle
(2pt); \draw[-<] (10,0)--(10.75,0); \draw (10.75,0)--(11.5,0);
\draw[-<] (11.5,0)--(12.25,0); \draw (12.25,0)--(13,0); \draw[-<]
(13,0)--(14,0); \draw (14,0)--(15,0); \draw[->] (15,0)--(15.75,0);
\draw (15.75,0)--(16.5,0); \draw[->] (16.5,0)--(17.25,0); \draw
(17.25,0)--(18,0); \fill (10,0) circle (2pt); \fill (11.5,0)
circle (2pt); \fill (13,0) circle (2pt); \fill (15,0) circle
(2pt); \fill (16.5,0) circle (2pt); \fill (18,0) circle (2pt);
\draw[-<] (10,0)--(10,1); \draw(10,1)--(10,2); \draw[-<]
(18,0)--(18,1); \draw (18,1)--(18,2);

\draw (11.33,2) node [anchor=south]{$x_0$}; \draw (14,2) node
[anchor=south]{$x_1$}; \draw (16.66,2) node [anchor=south]{$x_0$};
 \draw (10.75,0) node
[anchor=north]{$x_0$}; \draw (12.25,0) node [anchor=north]{$x_0$};
\draw (14,0) node [anchor=north]{$x_1$}; \draw (15.75,0) node
[anchor=north]{$x_0$}; \draw (17.25,0) node [anchor=north]{$x_0$};
\draw (10,1) node [anchor=east]{$x_1$}; \draw (18,1) node
[anchor=west]{$x_1$}; \draw (14,1.5) node [anchor=north]
{\textbf{$Rl_1^{-1}(w)$}}; \draw (13,0) node [anchor=north]
{$z_b$}; \draw[<-] (12.66,2.2)--(12.66,3); \draw (12.66,3)
node[anchor=south]{$w$}; \draw (10,0) node [anchor=east]{$z_l$};
\draw (18,0) node [anchor=west]{$z_r$};
\end{tikzpicture}

\caption{The four 2-cells $Rr_1^{\pm 1}(w)$ and $Rl_1^{\pm 1}(w)$ with
boundary consisting of 10 edges including $e_1(w)$.  In each rectangle,
the vertices $w$, $z_l$, $z_r$, and $z_b$ are labeled.}\label{fig:relators1}
\end{center}
\end{figure}
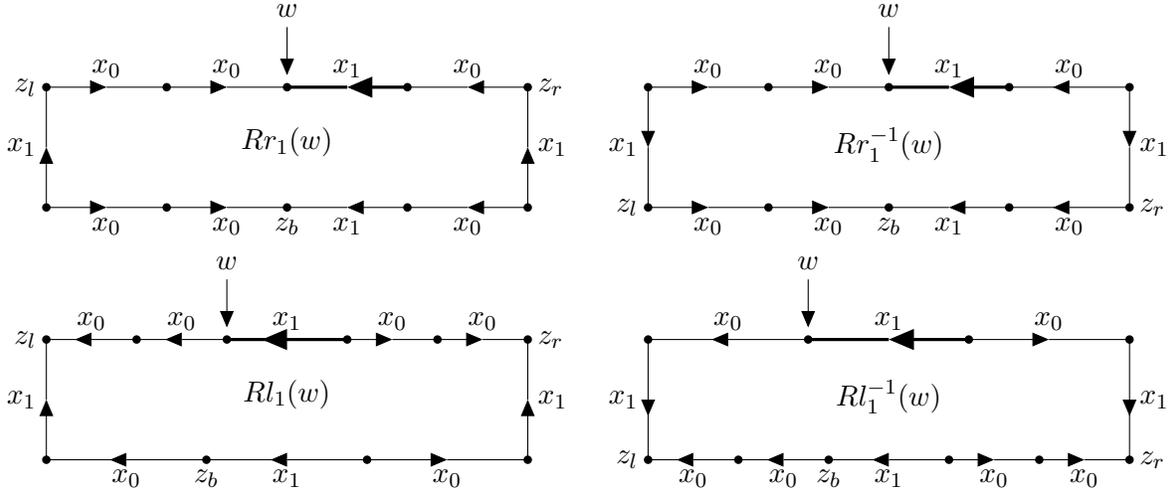

\begin{figure}[ht]
\begin{center}

%
%

\begin{tikzpicture}[>=triangle 45,scale=.95]
\fill (0,0) circle (2pt); \fill (1,0) circle (2pt); \fill (2,0)
circle (2pt); \fill (3,0) circle (2pt); \fill (4,0) circle (2pt);
\fill (5,0) circle (2pt); \fill (6,0) circle (2pt); \fill (0,2)
circle (2pt); \fill (1,2) circle (2pt); \fill (2,2) circle (2pt);
\fill (3,2) circle (2pt); \fill (4,2) circle (2pt); \fill (5,2)
circle (2pt); \fill (6,2) circle (2pt);

\draw (.5,0) node [anchor=north]{$x_0$}; \draw (1.5,0) node
[anchor=north]{$x_0$}; \draw (2.5,0) node [anchor=north]{$x_0$};
\draw (3.5,0) node [anchor=north]{$x_1$}; \draw (4.5,0) node
[anchor=north]{$x_0$}; \draw (5.5,0) node [anchor=north]{$x_0$};
\draw (.5,2) node [anchor=south]{$x_0$}; \draw (1.5,2) node
[anchor=south]{$x_0$}; \draw (2.5,2) node [anchor=south]{$x_0$};
\draw (3.5,2) node [anchor=south]{$x_1$}; \draw (4.5,2) node
[anchor=south]{$x_0$}; \draw (5.5,2) node [anchor=south]{$x_0$};
\draw (0,1) node [anchor=east]{$x_1$}; \draw (6,1) node
[anchor=west]{$x_1$}; \draw (3,1.5) node [anchor=north]
{\textbf{$Rr_2(w)$}};

\draw[->] (0,0)--(.5,0); \draw (.5,0)--(1,0); \draw[->]
(1,0)--(1.5,0); \draw (1.5,0)--(2,0); \draw[->] (2,0)--(2.5,0);
\draw (2.5,0)--(3,0); \draw[-<] (3,0)--(3.5,0); \draw
(3.5,0)--(4,0); \draw[-<] (4,0)--(4.5,0); \draw (4.5,0)--(5,0);
\draw[-<] (5,0)--(5.5,0); \draw (5.5,0)--(6,0); \draw[->]
(0,2)--(.5,2); \draw (.5,2)--(1,2); \draw[->] (1,2)--(1.5,2);
\draw (1.5,2)--(2,2); \draw[->] (2,2)--(2.5,2); \draw
(2.5,2)--(3,2); \draw[very thick] (3,2)--(3.5,2); \draw[<-,very
thick] (3.5,2)--(4,2); \draw[-<] (4,2)--(4.5,2); \draw
(4.5,2)--(5,2); \draw[-<] (5,2)--(5.5,2); \draw (5.5,2)--(6,2);
\draw[->] (0,0)--(0,1); \draw (0,1)--(0,2); \draw[->]
(6,0)--(6,1); \draw (6,1)--(6,2);

\draw (3,0) node [anchor=north] {$z_b$}; \draw[<-] (3,2.2)--(3,3);
\draw (3,3) node[anchor=south]{$w$}; \draw (0,2) node
[anchor=east]{$z_l$}; \draw (6,2) node [anchor=west]{$z_r$};

\fill (9,0) circle (2pt); \fill (10,0) circle (2pt); \fill (11,0)
circle (2pt); \fill (12,0) circle (2pt); \fill (13,0) circle
(2pt); \fill (14,0) circle (2pt); \fill (15,0) circle (2pt); \fill
(9,2) circle (2pt); \fill (10,2) circle (2pt); \fill (11,2) circle
(2pt); \fill (12,2) circle (2pt); \fill (13,2) circle (2pt); \fill
(14,2) circle (2pt); \fill (15,2) circle (2pt);

\draw (9.5,0) node [anchor=north]{$x_0$}; \draw (10.5,0) node
[anchor=north]{$x_0$}; \draw (11.5,0) node [anchor=north]{$x_0$};
\draw (12.5,0) node [anchor=north]{$x_1$}; \draw (13.5,0) node
[anchor=north]{$x_0$}; \draw (14.5,0) node [anchor=north]{$x_0$};
\draw (9.5,2) node [anchor=south]{$x_0$}; \draw (10.5,2) node
[anchor=south]{$x_0$}; \draw (11.5,2) node [anchor=south]{$x_0$};
\draw (12.5,2) node [anchor=south]{$x_1$}; \draw (13.5,2) node
[anchor=south]{$x_0$}; \draw (14.5,2) node [anchor=south]{$x_0$};
\draw (9,1) node [anchor=east]{$x_1$}; \draw (15,1) node
[anchor=west]{$x_1$}; \draw (12,1.5) node [anchor=north]
{\textbf{$Rr_2^{-1}(w)$}};

\draw[->] (9,0)--(9.5,0); \draw (9.5,0)--(10,0); \draw[->]
(10,0)--(10.5,0); \draw (10.5,0)--(11,0); \draw[->]
(11,0)--(11.5,0); \draw (11.5,0)--(12,0); \draw[-<]
(12,0)--(12.5,0); \draw (12.5,0)--(13,0); \draw[-<]
(13,0)--(13.5,0); \draw (13.5,0)--(14,0); \draw[-<]
(14,0)--(14.5,0); \draw (14.5,0)--(15,0); \draw[->]
(9,2)--(9.5,2); \draw (9.5,2)--(10,2); \draw[->] (10,2)--(10.5,2);
\draw (10.5,2)--(11,2); \draw[->] (11,2)--(11.5,2); \draw
(11.5,2)--(12,2); \draw[very thick] (12,2)--(12.5,2);
\draw[<-,very thick] (12.5,2)--(13,2); \draw[-<] (13,2)--(13.5,2);
\draw (13.5,2)--(14,2); \draw[-<] (14,2)--(14.5,2); \draw
(14.5,2)--(15,2); \draw (9,0)--(9,1); \draw[<-] (9,1)--(9,2);
\draw (15,0)--(15,1); \draw[<-] (15,1)--(15,2);

\draw (12,0) node [anchor=north] {$z_b$}; \draw[<-]
(12,2.2)--(12,3); \draw (12,3) node[anchor=south]{$w$}; \draw
(9,0) node [anchor=east]{$z_l$}; \draw (15,0) node
[anchor=west]{$z_r$};
\end{tikzpicture}

\begin{tikzpicture}[>=triangle 45]
\fill (0,4) circle (2pt); \fill (1.5,4) circle (2pt); \fill (3,4)
circle (2pt); \fill (5,4) circle (2pt); \fill (6.5,4) circle
(2pt); \fill (8,4) circle (2pt); \fill (0,6) circle (2pt); \fill
(1,6) circle (2pt); \fill (2,6) circle (2pt); \fill (3,6) circle
(2pt); \fill (5,6) circle (2pt); \fill (6,6) circle (2pt); \fill
(7,6) circle (2pt); \fill (8,6) circle (2pt);

\draw (.75,4) node [anchor=north]{$x_0$}; \draw (2.25,4) node
[anchor=north]{$x_0$}; \draw (4,4) node [anchor=north]{$x_1$};
\draw (5.75,4) node [anchor=north]{$x_0$}; \draw (7.25,4) node
[anchor=north]{$x_0$};

\draw (.5,6) node [anchor=south]{$x_0$}; \draw (1.5,6) node
[anchor=south]{$x_0$}; \draw (2.5,6) node [anchor=south]{$x_0$};
\draw (4,6) node [anchor=south]{$x_1$}; \draw (5.5,6) node
[anchor=south]{$x_0$}; \draw (6.5,6) node [anchor=south]{$x_0$};
\draw (7.5,6) node [anchor=south]{$x_0$};

\draw (0,5) node [anchor=east]{$x_1$}; \draw (8,5) node
[anchor=west]{$x_1$}; \draw (4,5.5) node [anchor=north]
{\textbf{$Rl_2(w)$}};

\draw (0,4)--(.75,4); \draw[<-] (.75,4)--(1.5,4); \draw
(1.5,4)--(2.25,4); \draw[<-] (2.25,4)--(3,4); \draw (3,4)--(4,4);
\draw[<-] (4,4)--(5,4); \draw[->] (5,4)--(5.75,4); \draw
(5.75,4)--(6.5,4); \draw[->] (6.5,4)--(7.25,4); \draw
(7.25,4)--(8,4);

\draw (0,6)--(.5,6); \draw[<-] (.5,6)--(1,6); \draw
(1,6)--(1.5,6); \draw[<-] (1.5,6)--(2,6); \draw (2,6)--(2.5,6);
\draw[<-] (2.5,6)--(3,6); \draw[very thick] (3,6)--(4,6);
\draw[<-,very thick] (4,6)--(5,6); \draw[->] (5,6)--(5.5,6); \draw
(5.5,6)--(6,6); \draw[->] (6,6)--(6.5,6); \draw (6.5,6)--(7,6);
\draw[->] (7,6)--(7.5,6); \draw (7.5,6)--(8,6);

\draw[->] (0,4)--(0,5); \draw (0,5)--(0,6); \draw[->]
(8,4)--(8,5); \draw (8,5)--(8,6);

\draw (3,4) node [anchor=north] {$z_b$}; \draw[<-] (3,6.2)--(3,7);
\draw (3,7) node[anchor=south]{$w$}; \draw (0,6) node
[anchor=east]{$z_l$}; \draw (8,6) node [anchor=west]{$z_r$};

\fill (0,2) circle (2pt); \fill (1.5,2) circle (2pt); \fill (3,2)
circle (2pt); \fill (5,2) circle (2pt); \fill (6.5,2) circle
(2pt); \fill (8,2) circle (2pt); \fill (0,0) circle (2pt); \fill
(1,0) circle (2pt); \fill (2,0) circle (2pt); \fill (3,0) circle
(2pt); \fill (5,0) circle (2pt); \fill (6,0) circle (2pt); \fill
(7,0) circle (2pt); \fill (8,0) circle (2pt);

\draw (.75,2) node [anchor=south]{$x_0$}; \draw (2.25,2) node
[anchor=south]{$x_0$}; \draw (4,2) node [anchor=south]{$x_1$};
\draw (5.75,2) node [anchor=south]{$x_0$}; \draw (7.25,2) node
[anchor=south]{$x_0$};

\draw (.5,0) node [anchor=north]{$x_0$}; \draw (1.5,0) node
[anchor=north]{$x_0$}; \draw (2.5,0) node [anchor=north]{$x_0$};
\draw (4,0) node [anchor=north]{$x_1$}; \draw (5.5,0) node
[anchor=north]{$x_0$}; \draw (6.5,0) node [anchor=north]{$x_0$};
\draw (7.5,0) node [anchor=north]{$x_0$};

\draw (0,1) node [anchor=east]{$x_1$}; \draw (8,1) node
[anchor=west]{$x_1$}; \draw (4,1.5) node [anchor=north]
{\textbf{$Rl_2^{-1}(w)$}};

\draw (0,2)--(.75,2); \draw[<-] (.75,2)--(1.5,2); \draw
(1.5,2)--(2.25,2); \draw[<-] (2.25,2)--(3,2); \draw[very thick]
(3,2)--(4,2); \draw[<-,very thick] (4,2)--(5,2); \draw[->]
(5,2)--(5.75,2); \draw (5.75,2)--(6.5,2); \draw[->]
(6.5,2)--(7.25,2); \draw (7.25,2)--(8,2);

\draw (0,0)--(.5,0); \draw[<-] (.5,0)--(1,0); \draw
(1,0)--(1.5,0); \draw[<-] (1.5,0)--(2,0); \draw (2,0)--(2.5,0);
\draw[<-] (2.5,0)--(3,0); \draw (3,0)--(4,0); \draw[<-]
(4,0)--(5,0); \draw[->] (5,0)--(5.5,0); \draw (5.5,0)--(6,0);
\draw[->] (6,0)--(6.5,0); \draw (6.5,0)--(7,0); \draw[->]
(7,0)--(7.5,0); \draw (7.5,0)--(8,0);

\draw (0,0)--(0,1); \draw[<-] (0,1)--(0,2); \draw (8,0)--(8,1);
\draw[<-] (8,1)--(8,2);

\draw (3,0) node [anchor=north] {$z_b$}; \draw[<-] (3,2.2)--(3,3);
\draw (3,3) node[anchor=south]{$w$}; \draw (0,0) node
[anchor=east]{$z_l$}; \draw (8,0) node [anchor=west]{$z_r$};

\end{tikzpicture}
\caption{The four 2-cells $Rr_2^{\pm 1}(w)$ and $Rl_2^{\pm 1}(w)$ with
boundary consisting of 14 edges including $e_1(w)$.  In each rectangle,
the vertices $w$, $z_l$, $z_r$, and $z_b$ are labeled.}\label{fig:relators2}
\end{center}
\end{figure}
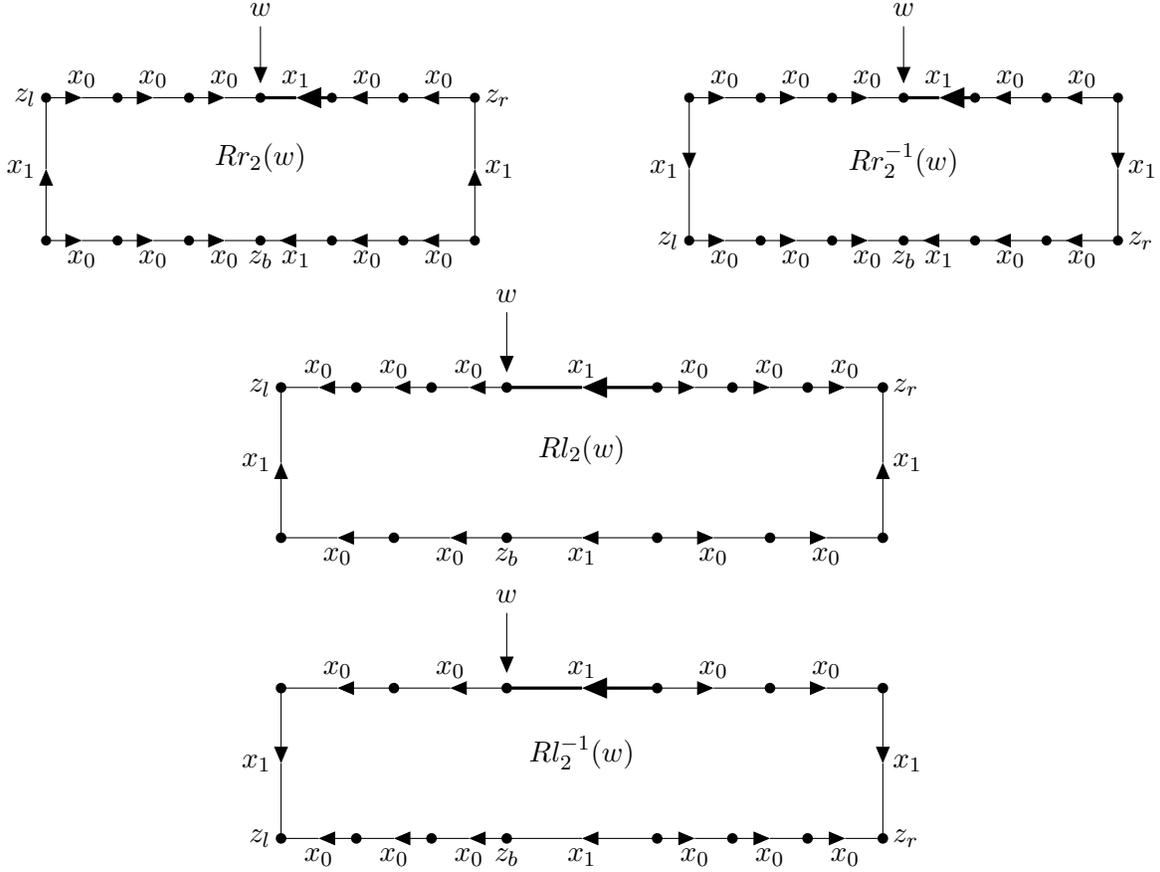

Rewriting the result of Corollary \ref{newbad} using
the quantities in Notation \ref{not:ss}, we have that
the bad edge $e_1(w)$ satisfies either
$s_r(w)>0$ or else property $(\ddagger)$ holds and either
$s_l(w)>0$ or $2 \leq j(w)\leq N_A(w)$.  It will be useful
to re-organize these cases for the definition of the map $c$,
as follows.

\begin{corollary}
Let $w \in F$.  If $e_a(w)$ is a bad edge,
then $a=1$, the tree $T_-(w)$ has at least 3 right carets, and either
\begin{enumerate}
\item $s_r(w)>0$,
\item $s_r(w)=0$, $s_l(w) \in \{0,1\}$, property $(\ddagger)$ holds,
$N_A(w) \ge 2$, and either
\begin{enumerate}
\item  $2 \leq j(w) \leq N_A(w)-1$ and $A(w)=B_{j(w)}(N_A(w))$,
\item $j(w)=N_A(w)$ and $A(w)=B_{N_A(w)-1}(N_A(w))$, or
\item  $2 \leq j(w) \leq N_A(w)-2$ and $A(w)=B_i(N_A(w))$ with
$j(w)+1 \leq i \leq N_A(w)-1$,
\end{enumerate}
or
\item $s_r(w)=0$, $s_l(w)>0$, property $(\ddagger)$ holds, and
the conditions of case (2) are not satisfied.
\end{enumerate}
\end{corollary}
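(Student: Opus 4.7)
My plan is to translate the three cases of Corollary~\ref{newbad} into the language of Notation~\ref{not:ss} and then subdivide further based on whether the left subtree $A(w)$ contains any interior carets. Corollary~\ref{newbad} already delivers $a = 1$ and the fact that $T_-(w)$ contains at least three right carets, and the equivalences $D(w) = R_{N_D(w)} \iff s_r(w) = 0$ and $A(w) = L_{N_A(w)} \iff s_l(w) = 0$ recast its three cases as: (I) $s_r(w) > 0$; (II) $s_r(w) = 0$, $s_l(w) = 0$, property $(\ddagger)$ holds, and $2 \leq j(w) \leq N_A(w)$ (which forces $N_A(w) \geq 2$); and (III) $s_r(w) = 0$, $s_l(w) > 0$, and property $(\ddagger)$ holds. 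Case (I) is the new case (1) verbatim, so the remaining work is to partition (II) and (III) into the new cases (2) and (3).

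The key combinatorial observation is that $A(w)$ has no interior carets if and only if $A(w) = B_i(N_A(w))$ for a unique index $i \in \{2, \dots, N_A(w)-1\}$, using the identification $B_{N_A(w)-1}(N_A(w)) = L_{N_A(w)}$ and extending this convention to $N_A(w) = 2$ where the only such tree is $L_2$. Under this form, $s_l(w) \in \{0,1\}$, with $s_l(w) = 0$ precisely when $i = N_A(w) - 1$; otherwise $A(w)$ has at least one interior caret and $s_l(w) \geq 2$.

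Given this dichotomy, I sort the sub-cases of (II) and (III) by comparing $j(w)$ with $i$ whenever $A(w) = B_i(N_A(w))$. A point with $j(w) = N_A(w)$ can only occur when $s_l(w) = 0$, so $i = N_A(w) - 1$, yielding subcase (2)(b). The equality $2 \leq j(w) = i \leq N_A(w)-1$ gives subcase (2)(a), while $2 \leq j(w) \leq N_A(w)-2$ with $j(w) < i$ gives subcase (2)(c); in the $s_l(w) = 0$ situation these last two cases force $i = N_A(w) - 1$, absorbing every $s_l(w) = 0$ point from (II) into one of the three subcases of (2). Every remaining possibility has either $A(w)$ with an interior caret (so $s_l(w) \geq 2$) or $A(w) = B_i(N_A(w))$ with $s_l(w) = 1$ and $j(w) > i$; in both scenarios no subcase of new case (2) applies while $s_l(w) > 0$ holds, placing $w$ in new case (3).

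The main obstacle is bookkeeping: I must check that the constraint $2 \leq j(w) \leq N_A(w)$ from (II) distributes cleanly into the subcases (2)(a), (2)(b), (2)(c) with no leakage into (3), and that the edge case $N_A(w) = 2$ does not interfere with the convention $B_{k-1}(k) = L_k$. Both points reduce to the dichotomy on interior carets in $A(w)$ together with a direct check of the three possible values of $j(w)$ in the $s_l(w) = 0$ regime, and the rest is routine case-chasing with no new combinatorial input required.
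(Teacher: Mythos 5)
Your proposal is correct and follows essentially the same route the paper indicates: it rewrites Corollary~\ref{newbad} in the $s_r$, $s_l$ language, uses the identification $s_l(w)=0 \iff A(w)=L_{N_A(w)}=B_{N_A(w)-1}(N_A(w))$ to distribute the $s_l(w)=0$ case over subcases (2)(a)--(c) according to the value of $j(w)$, and lets case (3) absorb everything else as the stated complement of case (2). (The paper leaves exactly this case-check to the reader, citing the same key fact about $B_{N_A(w)-1}(N_A(w))$.)
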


The proof of this corollary follows directly from
Corollary \ref{newbad}, using the fact that when $s_l(w)=0$
then $A=L_{N_A(w)}=B_{N_A(w)-1}(N_A(w))$, and is left
to the reader.

Using these cases, we will choose $c(e_1(w))$ to accomplish the
following:

\begin{itemize}
\item If $s_r(w)>0$, then $D(w)$ is not the minimal
element $R_{N_D(w)}$ relative to $<_r$; in this case $c(e_1(w))$ is chosen so
that either $N(z_b)<N(w)$, or
$N(z_b)=N(w)$, $N_D(z_b)=N_D(w)$ and $D(z_b)<_r D(w)$ (see part (1) of the definition below).

\medskip

\item If $s_r(w)=0$, but $A(w)$ is not the minimal tree relative
to $<_l^{j(w)}$, $c(e_1(w))$ is chosen (in parts (2c) and (3)) so
that either $N(z_b)<N(w)$, or $N(z_b)=N(w)$, $N_A(z_b)=N_A(w)$ and $A(z_b)<_l^{j(w)}A(w)$.

\medskip

\item Finally, if both $A(w)$ and $D(w)$ are minimal, then 
$c(e_1(w))$ is chosen (in parts (2a) and (2b)) so that caret
$j(w)$ is removed in moving around the 2-cell from $w$ to $z_b$, so
$N(z_b) < N(w)$.

\end{itemize}

\begin{definition}\label{def:collapse}
We define a map $c$ from the set of bad edges to the set of
$2$-cells in several cases. Consider a bad edge $e_1(w)$, and let
$k=N_A(w)$. Let $T_1$ be the left subtree of the root of $D(w)$,
and let $T_2$ be the left subtree of the right child of the root
of $D(w)$. Similarly, let $S_1$ be the right subtree of the root
caret of $A(w)$, and let $S_2$ be the right subtree of the left
child of the root caret of $A(w)$.

\begin{enumerate}
\item If $s_r(w)>0$ and:

\begin{itemize}
\item If $s_r(w)$ is odd, and $T_1$ is empty, then define $c(e_1(w)):=Rr_1(w)$.
\item If $s_r(w)$ is odd, and $T_1$ is not empty, let $c(e_1(w)):=Rr_1^{-1}(w)$.
\item If $s_r(w)$ is even, and $T_2$ is empty, let $c(e_1(w)):=Rr_2(w)$.
\item If $s_r(w)$ is even, and $T_2$ is not empty, let $c(e_1(w)):=Rr_2^{-1}(w)$.
\end{itemize}

\medskip

\item If $s_r(w)=0$, $s_l(w) \in \{0,1\}$, property $(\ddagger)$ holds,
$k \ge 2$, and:
\begin{enumerate}
\item If $2 \leq j(w) \leq k-1$ and $A(w)=B_{j(w)}(k)$, then let $c(e_1(w)):=Rl_2(w)$.
\item If $j(w)=k$ and $A(w)=B_{k-1}(k)$, then let $c(e_1(w)):=Rl_1(w)$.
\item If $2 \leq j(w) \leq k-2$ and $A(w)=B_i(k)$ for $j(w)+1 \leq i \leq k-1$,
  let $c(e_1(w)):=Rl_1(w)$.
\end{enumerate}

\medskip

\item If $s_r(w)=0$, $s_l(A)>0$, property $(\ddagger)$ holds, and
the conditions of case (2) are not satisfied, and:
\begin{itemize}
\item If $s_l(w)$ is odd, and $S_1$ is empty, then let $c(e_1(w)):=Rl_1(w)$.
\item If $s_l(w)$ is odd, and $S_1$ is not empty, let $c(e_1(w)):=Rl_1^{-1}(w)$.
\item If $s_l(w)$ is even, and $S_2$ is empty, let $c(e_1(w)):=Rl_2(w)$.
\item If $s_l(w)$ is even, and $S_2$ is not empty, let $c(e_1(w)):=Rl_2^{-1}(w)$.
\end{itemize}

\end{enumerate}
\end{definition}

See Figures \ref{fig:cell1} and \ref{fig:cell2} for examples of
bad edges and their corresponding two cells. Figures
\ref{fig:trees1} and \ref{fig:trees2} show the tree pair diagrams
corresponding to the elements $w$ and $z_b$, where $e_1(z_b)$ is
the edge across the two-cell from the bad edge $e_1(w)$.

 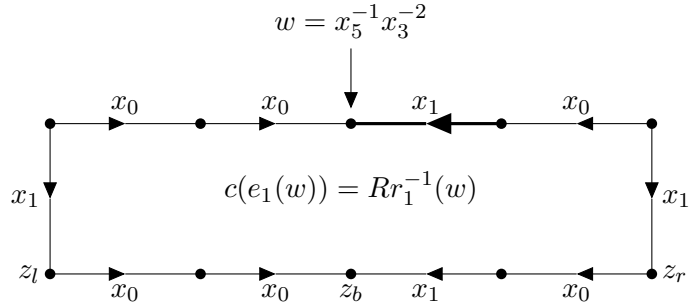
\begin{figure}[ht!]
\begin{center}
\begin{tikzpicture}[>=triangle 45]
\draw[->] (0,0)--(1,0); \draw (1,0)--(2,0); \draw[->]
(2,0)--(3,0); \draw (3,0)--(4,0); \draw (4,0)--(5,0); \draw[<-]
(5,0)--(6,0); \draw (6,0)--(7,0); \draw[<-] (7,0)--(8,0);

\fill (0,0) circle (2pt); \fill (2,0) circle (2pt); \fill (4,0)
circle (2pt); \fill (6,0) circle (2pt); \fill (8,0) circle (2pt);

\draw[->] (0,2)--(1,2); \draw (1,2)--(2,2); \draw[->]
(2,2)--(3,2); \draw (3,2)--(4,2); \draw[very thick] (4,2)--(5,2);
\draw[<-, very thick] (5,2)--(6,2); \draw (6,2)--(7,2); \draw[<-]
(7,2)--(8,2); \fill (0,2) circle (2pt); \fill (2,2) circle (2pt);
\fill (4,2) circle (2pt); \fill (6,2) circle (2pt); \fill (8,2)
circle (2pt); \draw (0,0)--(0,1); \draw[<-](0,1)--(0,2); \draw
(8,0)--(8,1); \draw[<-] (8,1)--(8,2);

\draw (1,0) node [anchor=north]{$x_0$}; \draw (3,0) node
[anchor=north]{$x_0$}; \draw (5,0) node [anchor=north]{$x_1$};
\draw (7,0) node [anchor=north]{$x_0$}; \draw (1,2) node
[anchor=south]{$x_0$}; \draw (3,2) node [anchor=south]{$x_0$};
\draw (5,2) node [anchor=south]{$x_1$}; \draw (7,2) node
[anchor=south]{$x_0$}; \draw (0,1) node [anchor=east]{$x_1$};
\draw (8,1) node [anchor=west]{$x_1$}; \draw (4,1.5) node
[anchor=north] {\textbf{$c(e_1(w))=Rr_1^{-1}(w)$}}; \draw (4,0)
node [anchor=north] {$z_b$}; \draw[<-] (4,2.2)--(4,3); \draw (4,3)
node[anchor=south]{$w=x_5^{-1}x_3^{-2}$}; \draw (0,0) node
[anchor=east] {$z_l$}; \draw (8,0) node [anchor=west] {$z_r$};
\end{tikzpicture}
\caption{The 2-cell corresponding to the bad edge
$e_1(x_5^{-1}x_3^{-2})$, where
$z_b=x_5^{-1}x_3^{-1}$.}\label{fig:cell1}
\end{center}
\end{figure}

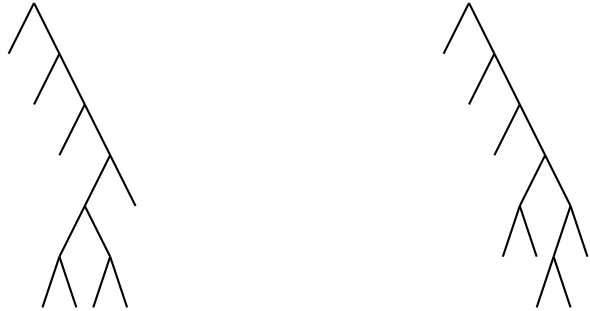
\begin{figure}[ht!]
\begin{center}
\begin{tikzpicture}[scale=.45,thick]
\tikzstyle{level 6}=[sibling distance=10mm] \coordinate child
child {child child {child child {child {child {child child}
child{child child}}child}}};
\end{tikzpicture}
\hspace{1.5in}
\begin{tikzpicture}[scale=.45,thick]
\tikzstyle{level 2}=[sibling distance=15mm] \tikzstyle{level
2}=[sibling distance=15mm] \tikzstyle{level 3}=[sibling
distance=15mm] \tikzstyle{level 4}=[sibling distance=15mm]
\tikzstyle{level 5}=[sibling distance=10mm] \tikzstyle{level
6}=[sibling distance=10mm] \coordinate child child {child child
{child child {child {child child} child {child {child child}
child}}} } ;
\end{tikzpicture}
\caption{The left (negative) trees from the pair diagrams corresponding to $w
= x_5^{-1}x_3^{-2}$ and $z_b=x_5^{-1}x_3^{-1}$. Notice that these
two trees differ by a rotation at the root caret of the subtree
$D(w)$.}\label{fig:trees1}
\end{center}
\end{figure}

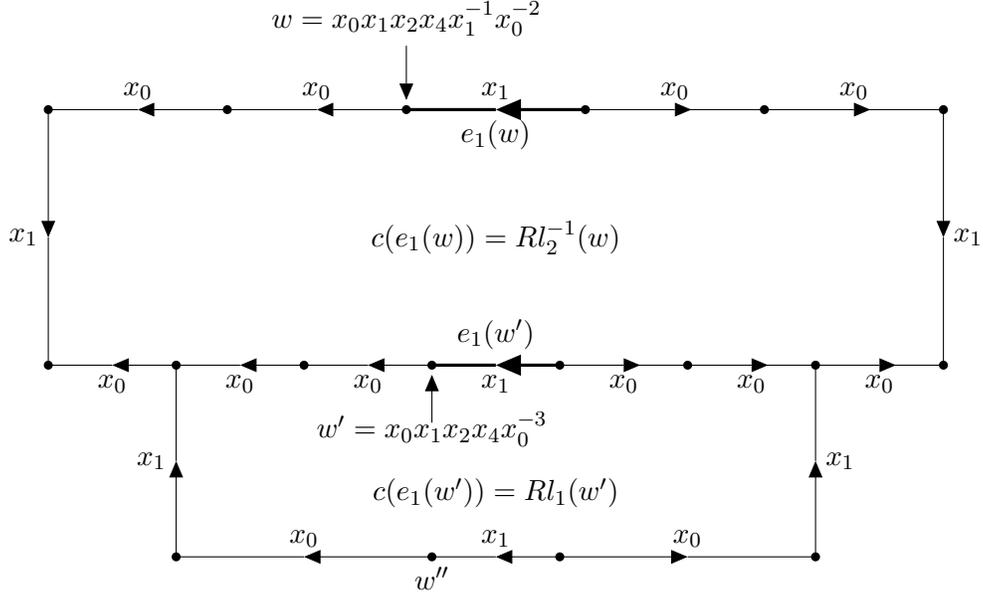
\begin{figure}[ht!]
\begin{center}
\begin{tikzpicture}[>=triangle 45,scale=.85]

\draw (0,0)--(1,0); \draw[<-] (1,0)--(2,0); \draw (2,0)--(3,0);
\draw[<-] (3,0)--(4,0); \draw (4,0)--(5,0); \draw[<-]
(5,0)--(6,0); \draw[very thick] (6,0)--(7,0); \draw[<-, very
thick] (7,0)--(8,0); \draw (8,0)--(9,0); \draw[>-] (9,0)--(10,0);
\draw (10,0)--(11,0); \draw[>-] (11,0)--(12,0); \draw
(12,0)--(13,0); \draw[>-] (13,0)--(14,0);

\fill (0,0) circle (2pt); \fill (2,0) circle (2pt); \fill (4,0)
circle (2pt); \fill (6,0) circle (2pt); \fill (8,0) circle (2pt);
\fill (10,0) circle (2pt); \fill (12,0) circle (2pt); \fill (14,0)
circle (2pt);

\draw (1,0) node [anchor=north]{$x_0$}; \draw (3,0) node
[anchor=north]{$x_0$}; \draw (5,0) node [anchor=north]{$x_0$};
\draw (7,0) node [anchor=north]{$x_1$}; \draw (9,0) node
[anchor=north]{$x_0$}; \draw (11,0) node [anchor=north]{$x_0$};
\draw (13,0) node [anchor=north]{$x_0$};

\draw (0,4)--(1.4,4); \draw[<-] (1.4,4)--(2.8,4); \draw
(2.8,4)--(4.2,4); \draw[<-] (4.2,4)--(5.6,4); \draw[very thick]
(5.6,4)--(7,4); \draw[<-, very thick] (7,4)--(8.4,4); \draw
(8.4,4)--(9.8,4); \draw[>-] (9.8,4)--(11.2,4); \draw
(11.2,4)--(12.6,4); \draw[>-] (12.6,4)--(14,4);

\fill (0,4) circle (2pt); \fill (2.8,4) circle (2pt); \fill
(5.6,4) circle (2pt); \fill (8.4,4) circle (2pt); \fill (11.2,4)
circle (2pt); \fill (14,4) circle (2pt); \draw (1.4,4) node
[anchor=south]{$x_0$}; \draw (4.2,4) node [anchor=south]{$x_0$};
\draw (7,4) node [anchor=south]{$x_1$}; \draw (9.8,4) node
[anchor=south]{$x_0$}; \draw (12.6,4) node [anchor=south]{$x_0$};

\draw (0,0)--(0,2); \draw[<-](0,2)--(0,4); \draw (14,0)--(14,2);
\draw[<-] (14,2)--(14,4); \draw (0,2) node [anchor=east]{$x_1$};
\draw (14,2) node [anchor=west]{$x_1$};

\draw (7,2) node  {\textbf{$c(e_1(w))=Rl_2^{-1}(w)$}}; \draw[<-]
(5.6,4.15)--(5.6,5); \draw (5.6,5)
node[anchor=south]{$w=x_0x_1x_2x_4x_1^{-1}x_0^{-2}$}; \draw
(7,3.6) node {$e_1(w)$};

\draw (2,0)--(2,-1.5); \draw[<-] (2,-1.5)--(2,-3); \draw
(12,0)--(12,-1.5); \draw[<-] (12,-1.5)--(12,-3); \draw (2,-1.5)
node [anchor=east] {$x_1$}; \draw (12,-1.5) node [anchor=west]
{$x_1$};

\draw (2,-3)--(4,-3); \draw[<-] (4,-3)--(6,-3); \draw
(6,-3)--(7,-3); \draw[<-] (7,-3)--(8,-3); \draw[->]
(8,-3)--(10,-3); \draw (10,-3)--(12,-3);

\fill (2,-3) circle (2pt); \fill (6,-3) circle (2pt); \fill (8,-3)
circle (2pt); \fill (12,-3) circle (2pt); \draw (6,-3) node
[anchor=north]{$w''$}; \draw (4,-3) node [anchor=south]{$x_0$};
\draw (10,-3) node [anchor=south]{$x_0$}; \draw (7,-3) node
[anchor=south]{$x_1$};

\draw (7,-2) node  {\textbf{$c(e_1(w'))=Rl_1(w')$}}; \draw[<-]
(6,-.15)--(6,-.9); \draw (6,-1) node {$w'=x_0x_1x_2x_4x_0^{-3}$};
\draw (7,.1) node [anchor=south]{$e_1(w')$};
\end{tikzpicture}
\caption{The 2-cells corresponding to the bad edges $e_1(w)$ and
$e_1(w')$, for $w=x_0x_1x_2x_4x_1^{-1}x_0^{-2}$ and
$w'=x_0x_1x_2x_4x_0^{-3}$. The edge across the bottom 2-cell from
$e_1(w')$ is $e_1(w'')$ where $w''=x_0x_1x_3x_0^{-2}$.}
\label{fig:cell2}
\end{center}
\end{figure}

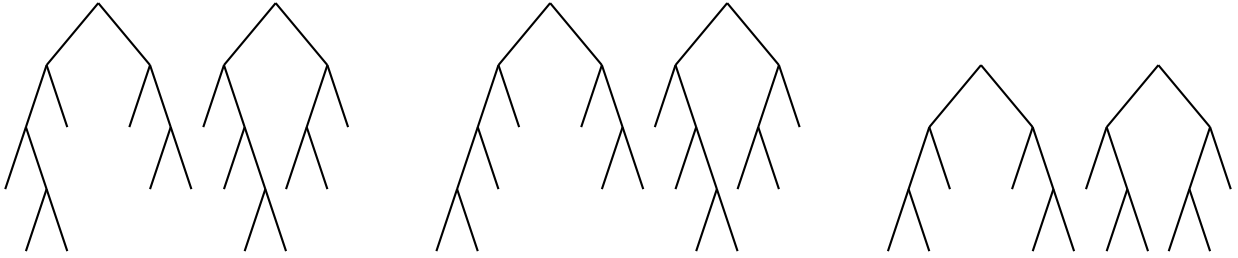
\begin{figure}[ht!]
\begin{center}
\begin{tikzpicture}[scale=.55,thick]
\tikzstyle{level 1}=[sibling distance=25mm] \tikzstyle{level
2}=[sibling distance=10mm] \coordinate child {child {child child
{child child}} child } child {child child {child child}};
\end{tikzpicture}
\begin{tikzpicture}[scale=.55,thick]
\tikzstyle{level 1}=[sibling distance=25mm] \tikzstyle{level
2}=[sibling distance=10mm] \tikzstyle{level 3}=[sibling
distance=10mm] \coordinate child {child child {child child {child
child}}} child {child {child child} child};
\end{tikzpicture}
\hspace{.35in}
\begin{tikzpicture}[scale=.55,thick]
\tikzstyle{level 1}=[sibling distance=25mm] \tikzstyle{level
2}=[sibling distance=10mm] \coordinate child {child {child {child
child} child} child } child {child child {child child}};
\end{tikzpicture}
\begin{tikzpicture}[scale=.55,thick]
\tikzstyle{level 1}=[sibling distance=25mm] \tikzstyle{level
2}=[sibling distance=10mm] \tikzstyle{level 3}=[sibling
distance=10mm] \coordinate child {child child {child child {child
child}}} child {child {child child} child};
\end{tikzpicture}
\hspace{.35in}
\begin{tikzpicture}[scale=.55,thick]
\tikzstyle{level 1}=[sibling distance=25mm] \tikzstyle{level
2}=[sibling distance=10mm] \coordinate child {child {child child}
child} child {child child {child child}};
\end{tikzpicture}
\begin{tikzpicture}[scale=.55,thick]
\tikzstyle{level 1}=[sibling distance=25mm] \tikzstyle{level
2}=[sibling distance=10mm] \tikzstyle{level 3}=[sibling
distance=10mm] \coordinate child {child child {child child}} child
{child {child child} child};
\end{tikzpicture}
\caption{The tree pair diagrams corresponding to $w =
x_0x_1x_2x_4x_1^{-1}x_0^{-2}$, $w'=x_0x_1x_2x_4x_0^{-3}$, and
$w''=x_0x_1x_3x_0^{-2}$ which are labeled in Figure
\ref{fig:cell2} above.} \label{fig:trees2}
\end{center}
\end{figure}

In the following theorem, we verify that the map defined above and
the partial order on the set of bad edges satisfy the hypothesis
of Theorem~\ref{combingextends}. In addition, we prove another
fact which will be used later in showing that the combing
satisfies a linear radial tameness function.

\begin{theorem}\label{thm:boundaryorder}
If $e_1(w)$ is a bad edge, then all other vertices $z$ on the
boundary of $c(e_1(w))$ have $N(z) \leq N(w)$. Furthermore,
 every edge of the form $e_1(z)$ along the boundary is either
a good edge, or precedes $e_1(w)$ in the ordering of the bad edges.
\end{theorem}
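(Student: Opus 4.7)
The plan is to verify the theorem by a careful case analysis mirroring the three-case definition of the map $c$ in Definition \ref{def:collapse}. In each case the 2-cell $c(e_1(w))$ has a fixed combinatorial shape (one of the eight relator cells pictured in Figures \ref{fig:relators1} and \ref{fig:relators2}) with three distinguished additional $e_1$-edges, call them $e_1(z_l)$, $e_1(z_r)$, and $e_1(z_b)$ for the left side, right side, and bottom edges. For each such cell I would compute the tree pair diagrams for $z_l$, $z_r$, and $z_b$ (as well as the interior vertices on the boundary) directly from the group multiplications reading around the cell, then verify (i)~that $N(\cdot)\le N(w)$ at each boundary vertex, and (ii)~that each of $e_1(z_l)$, $e_1(z_r)$, $e_1(z_b)$ is either good by Theorem \ref{goodedges} or precedes $e_1(w)$ under the partial order of Definition \ref{def:order}.

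For Case (1), where $s_r(w)>0$, the relator cell $Rr_n^{\pm 1}(w)$ was chosen precisely so that going from $w$ to $z_b$ performs the rotation $D(w)\mapsto f(D(w))$ of the right poset construction; thus $D(z_b)=f(D(w))$, $T_+(z_b)=T_+(w)$, $n(z_b)=n(w)$ (the rotation takes place inside the subtree $D(w)$, past the caret indexing $n$), and $N_D(z_b)\le N_D(w)$, so either $s_r(z_b)>0$ and clause (2) of the order applies, or $s_r(z_b)=0$ and $N_D(z_b)<N_D(w)$ forces $N(z_b)<N(w)$, placing $e_1(z_b)$ in clause (1). The side vertices are obtained from $w$ by shorter prefixes of the relator and thus satisfy $N\le N(w)$; reading off their tree pair diagrams, I would verify using the conditions of Theorem \ref{goodedges} that $e_1(z_l)$ and $e_1(z_r)$ are good (typically because the tree $T_-$ at these vertices retains its at-most-two-right-caret structure, or condition~(3) of Theorem \ref{goodedges} holds).

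For Case (2), where property $(\ddagger)$ holds and $A(w)$ is already minimal in the modified left order, the cell $Rl_1(w)$ or $Rl_2(w)$ is selected so that moving from $w$ around to $z_b$ forces the cancellation of caret $j(w)$; hence $N(z_b)<N(w)$ and $e_1(z_b)$ is below $e_1(w)$ by clause (1) of the order, while the $N$-inequality at the other vertices is immediate. For Case (3), where $s_l(w)>0$, the cell $Rl_n^{\pm 1}(w)$ was arranged so that the rotation from $w$ to $z_b$ realizes the left-poset rotation $A(w)\mapsto f(A(w))$; I would check $N_A(z_b)=N_A(w)$, $j(z_b)=j(w)$, $n(z_b)=n(w)$, $s_r(z_b)=0$, and $A(z_b)<_l^{j(w)} A(w)$, so that clause (3b) of the order applies. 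In each of these left-side cases one again reads off the trees at $z_l$ and $z_r$ from the cell diagram and applies Theorem \ref{goodedges} to confirm the side edges are good.

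The main obstacle will be correctly handling the interactions between the rotations and the modified left poset orderings $<_l^j$, in particular verifying the delicate transitions governed by clause (4) of Definition \ref{def:order}: when a rotation inside $D(w)$ collapses $D(z_b)$ to $R_{N_D(z_b)}$ (so $s_r$ drops from positive to $0$), or when property $(\ddagger)$ is gained or lost at $z_b$, one must check that the relevant $n$ and $N_A$ comparisons fall into clause (4a) or (4b) as appropriate, or else produce a strict decrease in $N$. A second source of case-splitting is the verification that the side edges $e_1(z_l)$ and $e_1(z_r)$ satisfy one of the four conditions of Theorem \ref{goodedges}: since these vertices differ from $w$ only by short multiplications along the relator boundary, their tree pair diagrams inherit most of the structure of $(T_-(w),T_+(w))$, and I would exploit this inheritance case by case, using the explicit subtree data $A(w),B(w),C(w),D(w)$ introduced in Notation \ref{not:4trees}, to confirm goodness.
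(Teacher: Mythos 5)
Your overall strategy is the same as the paper's: a case analysis keyed to Definition~\ref{def:collapse}, tracking how the tree pair diagram changes as one reads around the chosen relator cell, and matching each boundary $e_1$-edge against Theorem~\ref{goodedges} or a clause of Definition~\ref{def:order}. Your treatment of the bottom edge $e_1(z_b)$ in all three cases is essentially what the paper does, and your opening sentence correctly allows each of $e_1(z_l)$, $e_1(z_r)$, $e_1(z_b)$ to be ``either good or smaller in the order.''

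However, your detailed plan then commits to proving that the \emph{side} edges $e_1(z_l)$ and $e_1(z_r)$ are always good, and this is where the argument would fail. The side edges need not be good. In Case (1) ($s_r(w)>0$), the vertex $z_l$ is obtained from $w$ by left rotations at the root, and when $N(z_l)=N(w)$ with $s_r(z_l)>0$ the edge $e_1(z_l)$ can be bad; it is ordered below $e_1(w)$ only because $N_D(z_l)<N_D(w)$ and $n(z_l)=n(w)$, i.e.\ via clause (2a) of Definition~\ref{def:order}, not via goodness. Likewise in Cases (2)--(3) ($s_r(w)=0$), when $c(e_1(w))=Rl_2^{\pm1}(w)$ the left side vertex can have $s_r(z_l)=1>0=s_r(w)$ with $e_1(z_l)$ bad, and this is precisely the situation that clause (4b) of the order was built to handle. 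You attribute all the clause-(4) ``transition'' subtleties to the bottom edge $z_b$ (where indeed (4a) is used when $s_r$ drops to zero), but you miss that (4b) is needed for the left side edge, and that (2a)/(3a) are needed for side edges as well. A proof that insists on verifying one of the four conditions of Theorem~\ref{goodedges} at $z_l$ and $z_r$ cannot be completed; you must instead show that when these edges are bad they precede $e_1(w)$, which is exactly why the order relation was defined with those extra clauses. Separately, for the first assertion of the theorem, ``obtained by shorter prefixes of the relator'' is not by itself enough to give $N(z)\le N(w)$, since multiplying by a generator can add carets; the paper's point is that the subtrees $A(w),B(w),C(w),D(w)$ are large enough that no carets are ever added reading around the boundary, combined with $N(wx_1^{-1})\le N(w)$. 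You should make that observation explicit.
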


\begin{proof}
Let $e_1(z_b)$ be the bottom $e_1$ edge in the $2$-cell
$c(e_1(w))$, and $e_1(z_l)$
(respectively $e_1(z_r)$) be the
left (respectively right) side $e_1$ edges. The first statement in the theorem is a
consequence of the following observation. The tree $T_-(w)$ has
enough carets in the left subtree of the root caret, and in both
subtrees of the right child of the root caret to ensure that as we
read around $c(e_1(w))$ to the left, starting from $w$,
terminating at $z_b$, and form the successive products, no carets
ever need to be added to the tree pair diagrams in order to
perform these multiplications. The same holds for the path from
$wx_1^{-1}$, around to the right ending at $z_bx_1^{-1}$. Since
$N(wx_1^{-1}) \leq N(w)$, it follows that for each vertex $z$ of
$c(e_1(w))$, $N(z) \leq N(w)$. In addition, if $N(z)=N(w)$, then
$T_+(z)=T_+(w)$.

To prove the second statement of the theorem, we proceed by cases
according to the size of $s_r(w)$. In each case we show that
$e_1(z) < e_1(w)$, or else $e_1(z)$ is a good edge. We consider
separately the three subcases of $e_1(z)$ for $z \in \{z_b, z_l,
z_r \}$.
\begin{enumerate}
\item Case 1: $s_r(w) > 0$. In this case $c(e_1(w))=Rr_n^{\pm1}$
for $n\in \{1,2\}$. Also, note that $n(w)=N(w)-N_D(w)+C_r(w)$.

\medskip

\begin{enumerate}

\item $z=z_l$. In this case either:

\begin{itemize}
\item $N(z_l)<N(w)$ (and $e_1(z_l) < e_1(w)$ by (1) of
Definition~\ref{def:order} if $e_1(z_l)$ is a bad edge), or

\medskip

\item $N(z_l)=N(w)$, $s_r(z_l)>0$ , $N_D(z_l) < N_D(w)$ and $n(z_l)=n(w)$ (and $e_1(z_l) < e_1(w)$ by
(2a) of Definition~\ref{def:order} if $e_1(z_l)$ is a bad edge), or

\medskip

\item $N(z_l)=N(w)$ and $s_r(z)=0$. But one checks that if
$s_r(z)=0$, then $c(e_1(w))=Rr_n^{-1}$, $n \in \{1,2\}$, and
$s_r(w)=n$. But since no carets are ever added in moving from
$z_lx_1^{-1}$ to $z_l$, $e_1(z_l)$ is a good edge.
\end{itemize}

\medskip

\item $z=z_r$. If it is not the case that $N(z_r) < N(w)$, then
it is easily checked through the
definition of $Rr_n^{\pm 1}(w)$ that $T_-(z_l)$ and $T_-(z_r)$ differ only in the configuration of the
carets in the left subtree of the root. Therefore, the argument
for $e_1(z_l)$ goes through exactly, replacing $z_l$ by $z_r$.

\medskip

\item $z=z_b$. In this case either:

\begin{itemize}

\item $N(z_b)<N(w)$ (and $e_1(z_b) < e_1(w)$ by (1) of Definition~\ref{def:order} if
$e_1(z_b)$ is a bad edge), or

\medskip

\item $N(z_b)=N(w)$ and $s_r(z_b)>0$, in which case  $N_D(z_b)=N_D(w)$,
 and $D(z_b) <_r D(w)$. Then $C_r(z_b) \leq C_r(w)$, which implies
that $n(z_b)\leq n(w)$, (and
$e_1(z_b) < e_1(w)$ by (2b) of Definition~\ref{def:order} if
$e_1(z_b)$ is a bad edge), or

\medskip

\item $N(z_b)=N(w)$ and $s_r(z_b)=0$. However, this can happen only when $s_r(w)=n$
for $n \in \{1,2\}$,
$c(e_1(w))=Rr_n^{-1}(w)$, and $n(z_b) < (n_w)$ (and
$e_1(z_b) < e_1(w)$ by (4a) of Definition~\ref{def:order} if
$e_1(z_b)$ is a bad edge).

\end{itemize}

\end{enumerate}

\medskip

\item Case 2: $s_r(w)=0$. In this case, $c(e_1(w))=Rl_n^{\pm1}$
for $n\in \{0,1\}$. Also, note that $n(w)=N_A(w)+1$.
\begin{enumerate}

\medskip

\item $z=z_l$. In this case, $N_A(z_l) < N_A(w)$. Now either:

\begin{itemize}

\item $N(z_l)<N(w)$ (and $e_1(z_l) < e_1(w)$ by (1) of
Definition~\ref{def:order} if $e_1(z_l)$ is a bad edge), or

\medskip

\item $N(z_l)=N(w)$ and $s_r(z_l) =0$, and hence $n(z) \leq n(w)$ (and
$e_1(z_l) < e_1(w)$ by (3a) of Definition~\ref{def:order} if
$e_1(z_l)$ is a bad edge), or

\medskip

\item $N(z_l)=N(w)$ and $s_r(z_l)>0$. However, this only occurs if
$c(e_1(w))=Rl_2^{\pm 1}(w)$, and then $s_r(z_l)=1$ and $n(z_l)=n(w)$ (and $e_1(z_l) < e_1(w)$ by (4b) of
Definition~\ref{def:order} if $e_1(z_l)$ is a bad edge).
\end{itemize}

\medskip

\item $z=z_r$. If $e_1(z_r)$ is a bad edge, then $s_r(w) = 0$ implies that property $(\ddagger)$ holds.  In this case, $N(z_r)<N(w)$ because a caret is
removed when moving from $w$ to $wx_1^{-1}$.

\medskip

\item $z=z_b$. Then either:

\begin{itemize}

\item In cases (2a) and (2b) of Definition~\ref{def:collapse},
$N(z_b)<N(w)$, since caret $j(w)$ is removed in moving from $z_l$
to $z_lx_1^{-1}$ (and $e_1(z_b) < e_1(w)$ by (1) of
Definition~\ref{def:order} if $e_1(z_b)$ is a bad edge).

\medskip

\item In cases (2c) and (3) of Definition~\ref{def:collapse}, either $N(z_b)<N(w)$ (and $e_1(z_b) < e_1(w)$ by (1) of
Definition~\ref{def:order} if $e_1(z_b)$ is a bad edge), or
 $N(z_b)=N(w)$, $s_r(z_b)=0$, $N_A(z_b)=N_A(w)$, $n(z_b)=n(w)$, and
$A(z_b) <_l^{j(w)} A(w)$ (and $e_1(z_b) < e_1(w)$ by
(3b) of Definition~\ref{def:order} if $e_1(z_b)$ is a bad edge).

\end{itemize}

\end{enumerate}
\end{enumerate}
\end{proof}

Since all hypotheses of Theorem~\ref{combingextends} have now been
verified, Theorem~\ref{combingextends} shows that the nested traversal
0-combing $\Psi$
extends to a 1-combing $\Psi: X^1 \times [0,1] \ra X$.


\section{The combing of F satisfies a linear radial tameness function}\label{sec:tameness}

The fact that our combing $\Psi$ satisfies a linear radial tameness function
will follow from the fact that the number of carets in the tree pair diagrams representing the vertices along a
nested traversal normal form path never decreases, and from the close relationship
between word length over the alphabet $A=\{x_0^{\pm 1},x_1^{\pm 1}\}$ and
the number of carets.
First, we extend the concept of the number of carets in a tree pair diagram
from $F=X^0$ to all of $X$.

\begin{definition}
For any $x \in X$, we define $N_{\mx}(x)$ and $N_{\mn}(x)$ by cases.
\begin{enumerate}
\item If $x\in X^0$, then $x=g\in F$, and we let $N_{\mx}(x)=N_{\mn}(x)=N(g)$,
the number of carets in either tree of a reduced tree pair diagram for $g$.

\medskip

\item If $x\in X^1-X^0$, then $x$ is on the interior of some edge, with
vertices $g,h \in X^0$. Then define
$N_{\mx}(x)= \mx(N(g), N(h))$, and $N_{\mn}(x)= \mn(N(g), N(h))$.

\medskip

\item If $x\in X-X^1$, then $x$ is in the interior of some 2-cell, with
vertices $g_1, g_2,  \ldots, g_n$ along the boundary. Then we
define $N_{\mx}(x)=\mx(N(g_1),N(g_2), \dots, N(g_n))$, and
$N_{\mn}(x)=\mn(N(g_1),N(g_2), \dots, N(g_n))$.

\end{enumerate}
\end{definition}

The following lemma proves that using this expanded notion of the
number of carets of $x \in X$, the number of carets does not
decrease along the combing paths defined by $\Psi$.

\begin{lemma}\label{lem:maxmax}
For any $x \in X^1$ and $0 \leq s < t \leq 1$, we have
$N_{\mx}(\Psi(x,s)) \leq N_{\mx}(\Psi(x,t))$, where $\Psi$ is the
$1$-combing defined in Section \ref{sec:combing}.
\end{lemma}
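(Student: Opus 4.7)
The plan is to prove the lemma by induction on the construction of the 1-combing from Theorem~\ref{combingextends}: first for $x\in X^0$, then for $x$ on a good edge, and finally for $x$ on a bad edge, using Noetherian induction along the partial order on $\mathcal B$. The two essential inputs are Theorem~\ref{Nincreases}, which says $N$ is non-decreasing along the vertices of a nested traversal normal form, and Theorem~\ref{thm:boundaryorder}, which bounds $N$ on the boundary of $c(e)$ by $N(w)$ for a bad edge $e=e_1(w)$.

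For $x\in X^0$, the path $\Psi(x,\cdot)$ traces $\eta(x)$ through vertices $\epsilon=v_0,v_1,\ldots,v_n=x$. Theorem~\ref{Nincreases} gives $N(v_0)\le N(v_1)\le\cdots\le N(v_n)$, and on the open edge between $v_i$ and $v_{i+1}$, $N_{\mx}=\max(N(v_i),N(v_{i+1}))=N(v_{i+1})$, so $N_{\mx}\circ\Psi(x,\cdot)$ is non-decreasing in~$s$. For $x$ in the interior of a good edge $e_a(w)$, the proofs of Lemmas~\ref{x0generator} and \ref{x1generator} show that one of the words $\eta(w),\eta(wx_a^{-1})$ is obtained from the other by appending a single letter $x_a^{\pm 1}$. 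Without loss of generality $\eta(w)=\eta(wx_a^{-1})x_a$, so Theorem~\ref{Nincreases} applied to $\eta(w)$ gives $N(wx_a^{-1})\le N(w)$, and the natural extension combs $x$ by first following $\eta(wx_a^{-1})$ from $\epsilon$ to $wx_a^{-1}$ and then traversing $e_a(w)$ partway to~$x$. Monotonicity on the vertex portion reduces to the base case, and on the interior of the edge $N_{\mx}=\max(N(w),N(wx_a^{-1}))=N(w)\ge N(wx_a^{-1})$, so the value only rises as the combing enters the edge.

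For the bad edge case, by Noetherian induction assume the lemma holds for $\Psi$ on $X^0\cup\mathcal G\cup S$ with $S=\{e'\in\mathcal B:e'<e_1(w)\}$; by Theorem~\ref{thm:boundaryorder}, every edge of $\partial c(e_1(w))\setminus Int(e_1(w))$ lies in $\mathcal G\cup S$. For $x$ in the interior of $e_1(w)$, $\Psi(x,\cdot)$ is the concatenation of a combing path from $\epsilon$ to $\Theta(x,0)\in\partial c(e_1(w))\setminus Int(e_1(w))$ (covered by the inductive hypothesis) with the homotopy slice $\Theta(x,\cdot)$, which for $t\in(0,1)$ lies in the interior of $c(e_1(w))$ and at $t=1$ equals~$x$. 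The first portion is monotone by induction and ends with $N_{\mx}(\Theta(x,0))\le N(w)$, since every vertex on $\partial c(e_1(w))$ has $N$-value at most $N(w)$ by Theorem~\ref{thm:boundaryorder}. On the interior of the cell, $N_{\mx}$ equals the max of $N$ over the boundary vertices, namely $N(w)$ (attained at $w$), and on the interior of $e_1(w)$ we have $N_{\mx}(x)=\max(N(w),N(wx_1^{-1}))=N(w)$ as well. Therefore $N_{\mx}$ jumps from at most $N(w)$ up to exactly $N(w)$ once $\Theta(x,\cdot)$ enters the interior of the cell and stays at $N(w)$ until the endpoint~$x$. The main obstacle is simply the orchestration of the inductive hypothesis with Theorem~\ref{thm:boundaryorder}; the content of the argument is the uniform bound $N(w)$ on $N$-values across $c(e_1(w))$ provided by that theorem.
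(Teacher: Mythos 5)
Your proposal is correct and follows essentially the same route as the paper: the base case on $X^0$ via Theorem~\ref{Nincreases}, good edges via the prefix relationship between the normal forms of the two endpoints, and bad edges via Noetherian induction combined with the bound $N(z)\le N(w)$ on boundary vertices of $c(e_1(w))$ from Theorem~\ref{thm:boundaryorder}. The paper's own proof is just a terser statement of exactly these three steps.
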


\begin{proof}
In the case where $x \in X^0$, from Theorem~\ref{Nincreases} we know that 
along the nested traversal normal form $\eta(x)=a_1
a_2 \dots a_n$, we have $N(a_1 a_2 \cdots a_i) \leq N(a_1 a_2 \cdots
a_{i+1})$. For $x \in X^1 - X^0$, if $x$ is in the interior of a
good edge the conclusion of this lemma follows from the previous
sentence. If $x$ is in the interior of a bad edge $e$, then
the inequality follows from Noetherian induction and the fact that
for $y$ on any bad edge $e$ and $z$
on the complement of the edge $e$ in the closure of the 2-cell
$c(e)$, we have
$N_{\mx}( z) \leq N_{\mx}(y)$ as shown in Theorem \ref{thm:boundaryorder}.
\end{proof}

The next lemma relates the level of $x \in X$ to the quantities
$N_{\mn}(x)$ and $N_{\mx}(x)$.  Recall that when $x \in X^0$, the
level of $x$ and $l_A(x)$, the word length of $x$ with respect to
$A$, are identical. The lengths of the two relators in this
presentation are 10 and 14, so the constant $c$ used in defining
the level of a point in the interior of a 2-cell of the Cayley
complex for this presentation of $F$  will be $c=4(10)(14)+1$.

\begin{lemma}\label{lem:Nlev}
 For any $x \in X$ we have
$$N_{\mn}(x)-2 \leq \mathrm{lev}(x) < 4N_{\mx}(x)+1$$
and additionally $N_{\mx}(x) - N_{\mn}(x) \leq 9$.
\end{lemma}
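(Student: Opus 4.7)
The plan is to handle the three cases in the definitions of $\mathrm{lev}$, $N_{\mn}$, and $N_{\mx}$ separately, reducing each one to Lemma~\ref{lengthisN} and, in the 2-cell case, to a combinatorial estimate on the relator structure.

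For $x \in X^{0}$, we have $N_{\mn}(x) = N_{\mx}(x) = N(x)$ and $\mathrm{lev}(x) = l_A(x)$, so the main inequality is immediate from Lemma~\ref{lengthisN} and the caret difference is zero. For $x \in X^{1} - X^{0}$, we apply Lemma~\ref{lengthisN} at each of the two endpoints $g,h$ of the edge containing $x$: averaging $l_A(g) \geq N(g) - 2$ and $l_A(h) \geq N(h) - 2$ yields $(l_A(g)+l_A(h))/2 \geq \min(N(g),N(h)) - 2 = N_{\mn}(x) - 2$, and similarly on the upper side, with the $+\tfrac{1}{4}$ easily absorbed into the strict inequality on the right. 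For the caret-difference bound, I would invoke the observation made in the proof of Theorem~\ref{Nincreases} that multiplying by a single generator changes $N$ by at most $2$, so $N_{\mx}(x) - N_{\mn}(x) \leq 2$.

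For $x \in X - X^{1}$, $x$ lies inside a 2-cell with boundary vertices $g_1, \dots, g_n$. Applying Lemma~\ref{lengthisN} vertex-by-vertex and averaging gives $\tfrac{1}{n}\sum l_A(g_i) \geq N_{\mn}(x) - 2$ and $\tfrac{1}{n}\sum l_A(g_i) \leq 4 N_{\mx}(x)$. Adding the $\tfrac{1}{4} + \tfrac{1}{c}$ from Definition~\ref{def:level} gives the lower bound on $\mathrm{lev}(x)$ immediately, and for the upper bound it suffices to note that $\tfrac{1}{4}+\tfrac{1}{c}<1$ since $c = 4(10)(14)+1 = 561$.

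The main obstacle is the caret-difference bound $N_{\mx}(x) - N_{\mn}(x) \leq 9$ in the 2-cell case, and this is where one has to use the specific structure of the two defining relators. The key idea is to view $N(g_1), N(g_2), \dots, N(g_n), N(g_1)$ as a cyclic sequence and estimate its total variation. Each edge in the boundary labeled $x_0^{\pm 1}$ contributes at most $1$ to a single $|N(g_{i+1}) - N(g_i)|$, while each edge labeled $x_1^{\pm 1}$ contributes at most $2$. A direct count of the two relators shows the commutator $[x_0 x_1^{-1}, x_0^{-1} x_1 x_0]$ contains $6$ letters of the form $x_0^{\pm 1}$ and $4$ letters of the form $x_1^{\pm 1}$, and $[x_0 x_1^{-1}, x_0^{-2} x_1 x_0^{2}]$ contains $10$ and $4$ respectively; so the cyclic total variation $\sum_i |N(g_{i+1})-N(g_i)|$ is at most $14$ in the first case and $18$ in the second. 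Since any cyclic sequence must descend from its maximum to its minimum and then re-ascend, its total variation is at least $2(N_{\mx}(x) - N_{\mn}(x))$, whence $N_{\mx}(x) - N_{\mn}(x) \leq 9$, with the larger relator realizing the bound.
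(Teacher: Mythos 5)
Your proof is correct, and for the main inequality it follows essentially the same route as the paper: apply Lemma~\ref{lengthisN} at each vertex, average, and absorb the fractional corrections $\tfrac14$ and $\tfrac14+\tfrac1c$ into the strict upper bound; the edge case of the caret-difference bound likewise matches (at most $2$, via the per-generator change in $N$ noted in the proof of Theorem~\ref{Nincreases}). The one place you genuinely diverge is the bound $N_{\mx}(x)-N_{\mn}(x)\le 9$ for $x$ in a 2-cell. The paper argues via a shortest arc between the extremal vertices: they are at most seven edges apart, at most two of those edges are $x_1^{\pm1}$-edges, giving $2(2)+5=9$. You instead bound the cyclic total variation of $N$ around the boundary ($6\cdot 1+4\cdot 2=14$ for the 10-cycle, $10\cdot 1+4\cdot 2=18$ for the 14-cycle) and use that a cyclic sequence has total variation at least twice its oscillation; your relator letter counts are correct. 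Both arguments yield $9$, but yours is arguably the more robust bookkeeping: on the 14-cycle an arc of seven consecutive edges can in fact contain three $x_1^{\pm1}$-edges (e.g.\ the arc $x_1^{-1}x_0^{-2}x_1x_0^2x_1$ inside $x_0x_1^{-1}x_0^{-2}x_1x_0^2x_1x_0^{-2}x_1^{-1}x_0^2$), so the paper's count implicitly requires choosing the complementary arc, whereas your total-variation estimate is independent of any choice of arc.
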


\begin{proof}
When $x \in X^0$, Lemma \ref{lengthisN} gives $N(x)-2 \leq l_A(x)
\leq 4N(x)$. Therefore
$$ N_{\mn}(x)-2= N(x)-2 \leq \mathrm{lev}(x)=l_A(x) \leq
4N(x)=4N_{\mx}(x) < 4N_{\mx}(x)+1$$
for $x \in X^0$.  If, on the other hand, $x \in X^1 - X^0$, then
$x$ is in the interior of an edge, whose endpoints are $g, h \in
F$. Now $$\mathrm{lev}(x)= \frac{\mathrm{lev}(g)+\mathrm{lev}(h)}{2}
+ \frac{1}{4} \leq
\frac{4N(g)+4N(h)}{2}+\frac{1}{4} \leq 4 \mx(N(g), N(h)) +
\frac{1}{4}$$ $$ = 4N_{\mx}(x)+\frac{1}{4}< 4N_{\mx}(x)+1 .$$ But,
on the other hand,
$$\mathrm{lev}(x) \geq \frac{N(g)+N(h)}{2} -2 +\frac{1}{4} \geq
\mn(N(g),N(h))-2 + \frac{1}{4}= N_{\mn}(x) -2 + \frac{1}{4}
\geq N_{\mn}(x)-2.$$
In summary, in this case, we have
$$ N_{\mn}(x)-2  \leq \mathrm{lev}(x) \leq
4N_{\mx}(x)+\frac{1}{4}< 4N_{\mx}(x)+1. $$
And finally, if $x\in X-X^1$, $x$ is in the interior of some 2-cell,
with $g_1, g_2, \dots, g_k$ on the boundary, then
$$
\mathrm{lev}(x)= \frac{\mathrm{lev}(g_1)+ \cdots +
\mathrm{lev}(h)}{k} + \frac{1}{4}+\frac{1}{c}
\leq \frac{4(N(g_1)+ \cdots +N(g_k))}{k}+\frac{1}{4}+\frac{1}{c}
$$
$$
\leq 4 \mx(N(g_1),\dots, N(g_k)) + \frac{1}{4}+\frac{1}{c}
= 4N_{\mx}(x)+\frac{1}{4}+\frac{1}{c}< 4N_{\mx}(x)+1.
$$
On the other hand,
$$
\mathrm{lev}(x) \geq \frac{N(g_1)+ \cdots N(g_k)}{k} -2 +\frac{1}{4}
\geq  \mn(N(g_1), \dots ,N(g_k))-2 + \frac{1}{4}+ \frac{1}{c}
$$
$$
= N_{\mn}(x) -2 + \frac{1}{4}+\frac{1}{c} \geq N_{\mn}(x)-2.
$$
 And so, in this case, we have
$$
N_{\mn}(x)-2  \leq \mathrm{lev}(x) \leq
4N_{\mx}(x)+\frac{1}{4}+\frac{1}{c}<
4N_{\mx}(x)+1.
$$
This establishes the first statement of the lemma. Now if $x \in
X^0$, $N_{\mx}(x)=N_{\mn}(x)$. For $x \in X^1 - X^0$, $N_{\mx}(x)-
N_{\mn}(x) \leq 2$, since one either needs to add at most 1 caret
(or can cancel at most one caret) when multiplying by
$x_0^{\pm1}$, and one needs to add at most two carets (or can
cancel at most two carets) when multiplying by $x_1^{\pm1}$. Now
the relators in our presentation of $F$ have length either 10 or
14, and two vertices $v$ and $w$ on the boundary of a relator can
be at most seven edges apart. Furthermore, examining the relators,
we see that at most two of these seven edges correspond to
multiplication by $x_1^{\pm1}$. Therefore, for $x \in X-X^1$,
$N_{\mx}(x)- N_{\mn}(x) \leq 2(2)+5=9$.
\end{proof}

We are now able to prove that the combing $\Psi$ defined in Section \ref{sec:combing} satisfies
a linear radial tameness function.

\begin{theorem}\label{thm:flintame}
Thompson's group $F$ is in $TC_\rho$ with $\rho$ linear.  More
specifically, the Cayley complex of the presentation
$F=\langle x_0,x_1 |
[x_0x_1^{-1},x_0^{-1}x_1x_0],[x_0x_1^{-1},x_0^{-2}x_1x_0^2]
\rangle$ has a 1-combing admitting a radial tameness function of $\rho
(q)=4q+45$.
\end{theorem}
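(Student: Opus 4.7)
The plan is to chain the two lemmas just proved, using Lemma~\ref{lem:maxmax} to track $N_{\mx}$ along combing paths and Lemma~\ref{lem:Nlev} to convert between level and the caret counts $N_{\mx}$, $N_{\mn}$. All the real work is already packaged in those two lemmas; what remains is a direct arithmetic verification that the constants $4$ and $45$ are large enough.

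First I would fix $x\in X^1$, $0\le s<t\le 1$, and $q\in\mathbb{Q}$, and assume the hypothesis $\mathrm{lev}(\Psi(x,s))>4q+45$. Applying the upper bound in Lemma~\ref{lem:Nlev} to the point $\Psi(x,s)$ gives
\[
4N_{\mx}(\Psi(x,s))+1 > \mathrm{lev}(\Psi(x,s)) > 4q+45,
\]
so $N_{\mx}(\Psi(x,s)) > q+11$. Next I invoke Lemma~\ref{lem:maxmax}, which says that the maximal caret count never decreases along a combing path, to obtain $N_{\mx}(\Psi(x,t))\ge N_{\mx}(\Psi(x,s))> q+11$.

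Now I would pass from $N_{\mx}$ to $N_{\mn}$ at the point $\Psi(x,t)$. The second assertion of Lemma~\ref{lem:Nlev} gives $N_{\mx}(\Psi(x,t))-N_{\mn}(\Psi(x,t))\le 9$, so $N_{\mn}(\Psi(x,t))\ge N_{\mx}(\Psi(x,t))-9 > q+2$. Finally, the lower bound in Lemma~\ref{lem:Nlev} yields
\[
\mathrm{lev}(\Psi(x,t))\ge N_{\mn}(\Psi(x,t))-2 > q,
\]
which is exactly the defining condition for the tameness function $\rho(q)=4q+45$. Since $\Psi$ is already a 1-combing by Theorem~\ref{combingextends} and the verification in Theorem~\ref{thm:boundaryorder}, this finishes the proof.

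There is no real obstacle here: the potentially delicate geometric content (carets do not decrease along the 0-combing, and do not decrease when crossing a bad edge through its chosen 2-cell) was absorbed into Lemma~\ref{lem:maxmax} via Theorem~\ref{Nincreases} and Theorem~\ref{thm:boundaryorder}, and the comparison between level and caret count was absorbed into Lemma~\ref{lem:Nlev}. The constant $45=44+1$ comes from the slack $4\cdot 11$ needed to absorb the loss of $9$ carets between $N_{\mx}$ and $N_{\mn}$ on a $2$-cell, the loss of $2$ in the lower bound $N_{\mn}-2\le\mathrm{lev}$, and the extra $+1$ in the upper bound $\mathrm{lev}<4N_{\mx}+1$.
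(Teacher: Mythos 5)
Your argument is correct and is essentially identical to the paper's proof: both apply the upper bound of Lemma~\ref{lem:Nlev} at $\Psi(x,s)$ to get $N_{\mx}(\Psi(x,s))>q+11$, push this forward with Lemma~\ref{lem:maxmax}, convert to $N_{\mn}(\Psi(x,t))>q+2$ via the bound $N_{\mx}-N_{\mn}\le 9$, and conclude with the lower bound $\mathrm{lev}\ge N_{\mn}-2$. The arithmetic and the accounting for the constant $45$ match the paper exactly.
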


\begin{proof}
Let $\Psi:X^1 \times [0,1] \rightarrow X$ be the 1-combing of $F$
constructed in the previous section. Suppose that $x\in X^1$, $0
\leq s < t \leq 1$, and $\mathrm{lev}(\Psi(x,s)) > 4q+45$.
In Lemma \ref{lem:Nlev} we have  shown that
$\mathrm{lev}(\Psi(x,s)) < 4N_{\mx}( \Psi(x,s))+1$, which implies that
$4N_{\mx}( \Psi(x,s))
> 4q+44$, or $N_{\mx}( \Psi(x,s)) > q+11$.
From Lemma \ref{lem:maxmax} we have
$N_{\mx}(\Psi(x,t))
\geq N_{\mx}( \Psi(x,s))$, and so $N_{\mx}(\Psi(x,t)) > q+11$.
The last statement in
Lemma \ref{lem:Nlev} also shows that
$N_{\mx}(\Psi(x,t)) - N_{\mn}(\Psi(x,t)) \leq 9$, and so
$N_{\mn}(\Psi(x,t)) > q+2$.  Using Lemma \ref{lem:Nlev} once more,
we obtain $\mathrm{lev}(\Psi(x,t)) > q$.
\end{proof}


\section{Linear tame combing for $BS(1,\p)$}
\label{sec:bslinear}

In this section we prove the following.

\begin{theorem}\label{thm:bsp}
For every natural number $p \ge 3$,
the group $BS(1,\p)$ is in the class $TC_\rho$ with $\rho$ linear.
Moreover, the Cayley complex for the presentation
$BS(1,\p)=\langle a,t \mid tat^{-1}=a^{\p} \rangle$
has a 1-combing admitting a radial tameness function of
$\rho(q)=4(\h + 2)q+[4(\h + 2)+1](\h + 4)$ where $\h=\lfloor \frac{p}{2} \rfloor$.

\end{theorem}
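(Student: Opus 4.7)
The plan is to follow the template established in Sections~\ref{sec:combing} and \ref{sec:tameness} for Thompson's group $F$: exhibit quasi-geodesic normal forms and the associated $0$-combing, isolate the good and bad edges, order the bad edges and assign each to a $2$-cell in a way compatible with Theorem~\ref{combingextends}, and then track levels to obtain the explicit linear radial tameness function.

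For the normal forms, I would use the fact that every element of $BS(1,p)$ has a unique Britton expression $g = t^{-\alpha} a^m t^\beta$ with $\alpha,\beta \ge 0$ and with $p \nmid m$ whenever $\alpha\beta > 0$, and set $\eta(g):=t^{-\alpha}a^m t^\beta$. A direct comparison with an efficient base-$p$ geodesic representative shows that $\eta(g)$ is a $(\lambda,0)$-quasi-geodesic with $\lambda$ controlled by $h=\lfloor p/2\rfloor$: rewriting $a^m$ in balanced base $p$ shortens $|m|$ by at most a factor of $h+2$ per level of descent. The $0$-combing $\Psi:X^0\times[0,1]\to X^1$ is then defined to trace $\eta(g)$ on $\{g\}\times[0,1]$.

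Next I identify the good and bad edges. An edge between $g$ and $gs^{-1}$ with $s\in\{a^{\pm 1},t^{\pm 1}\}$ is good precisely when $\eta(gs^{-1})$ is the free reduction of $\eta(g)s^{-1}$. A direct case analysis shows the bad edges split into a handful of families determined by whether multiplying by $s$ triggers a base-$p$ carry in the coefficient $m$, or forces a simultaneous adjustment of the exponents $\alpha$ and $\beta$. For each bad edge $e$ I would choose $c(e)$ to be an adjacent $(p+3)$-gon relator cell $tat^{-1}a^{-p}$ whose opposite boundary portion rewrites the offending subword by one closer to the identity. A well-founded partial order lexicographic on the triple $(\alpha+\beta,\,|m|,\,\text{position within } c(e))$ can be arranged so that every other bad edge on $\partial c(e)$ strictly precedes $e$, and Theorem~\ref{combingextends} then extends $\Psi$ to a $1$-combing of $X$.

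To derive the tameness function I would introduce a function $N:X^0\to\N$ playing the role of the caret count for $F$ (for instance $N(g) := \alpha + \beta + \lceil\log_p(|m|+1)\rceil$), extend it to $X$ via maximum and minimum over vertices of the containing cell, and verify the $BS(1,p)$ analogs of Theorem~\ref{Nincreases} and Lemma~\ref{lem:Nlev}. The inequalities I would aim for are $N_{\mn}(x)-(h+4) \le \mathrm{lev}(x) \le 4(h+2)\,N_{\mx}(x) + (h+4)$ together with $N_{\mx}(x)-N_{\mn}(x)\le h+4$; the multiplicative factor $4(h+2)$ reflects the cost in generator steps of incrementing $N$ by one against the base-$p$ carry structure, and the gap $h+4$ comes from the fact that a $(p+3)$-gon relator spans at most that many values of $N$. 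Chaining these bounds along a combing trajectory exactly as in the proof of Theorem~\ref{thm:flintame} then yields $\rho(q)=4(h+2)q+[4(h+2)+1](h+4)$. The principal obstacle will be the case-by-case enumeration of bad edges and the explicit construction of the cell map; though the single relator here is much simpler than the two relators for $F$, the base-$p$ carries produce several subcases each demanding separate compatibility checks with the partial order.
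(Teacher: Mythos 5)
Your strategy is genuinely different from the paper's, and for $BS(1,p)$ the paper does not use normal forms together with the good/bad-edge machinery of Theorem~\ref{combingextends} at all. Instead it exploits the homeomorphism $X\cong\R\times T$ to define the $1$-combing directly and continuously on all of $X^1$ by ``down--horizontal--up'' paths (descend in $T$ to the nadir, move horizontally, then ascend), so no cell map or edge ordering is needed; tameness is then proved via Lemma~\ref{lem:bsgeod} together with Lemmas~\ref{lem:levimpliesh} and~\ref{lem:himplieslev}, which convert bounds on the geodesic length of $t^{-m}a^jt^s$ into bounds on $|j|$ and back, with a three-case analysis according to which portion of the path the later point lies on. That route avoids entirely the case analysis you defer to the end of your outline.

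Beyond the difference in route, several steps of your outline fail as stated. First, $t^{-\alpha}a^mt^\beta$ is not a $(\lambda,0)$-quasigeodesic for any fixed $\lambda$: $\eta(a^{p^C})=a^{p^C}$ has length $p^C$ while $l_\Gamma(a^{p^C})=2C+1$. (This is not fatal, since the paper's combing paths are these same non-geodesic words, but the claim is false.) Second, and more seriously, the proposed inequality $N_{\mn}(x)-(h+4)\le \mathrm{lev}(x)$ is false for $N(g)=\alpha+\beta+\lceil\log_p(|m|+1)\rceil$: the element $g=at^{-C}=t^{-C}a^{p^C}$ has $N(g)=2C+1$ but $\mathrm{lev}(g)=C+1$. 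The lower bound on $\mathrm{lev}$ in terms of $N_{\mn}$ is exactly what converts ``$N_{\mx}(\Psi(x,t))$ large'' into ``$\mathrm{lev}(\Psi(x,t))>q$'' in the final chaining (as in the proof of Theorem~\ref{thm:flintame}), so this breaks the derivation; with your $N$ one only gets a multiplicative relation, which still yields some linear $\rho$ but not the stated one, and even granting your inequalities the constants chain to $\mathrm{lev}(\Psi(x,t))>q-(h+4)$ rather than $>q$. Third, a lexicographic order on $(\alpha+\beta,|m|,\dots)$ has infinite down-sets (all bad edges at smaller height, with $m$ arbitrary), violating condition (1) of Theorem~\ref{combingextends}; you would need instead something like the transitive closure of the boundary-adjacency relation induced by your cell map. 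So while the $F$-template could plausibly be pushed through for $BS(1,p)$, the monotone quantity, the comparison lemma, and the partial order all need to be different from what you propose, and the paper's product-structure argument is substantially shorter.
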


Throughout this section, let $p \ge 3$, $G=BS(1,\p)$,
$\h=\lfloor \frac{p}{2} \rfloor$,
and $A=\{a^{\pm 1},t^{\pm 1}\}$.  Let
$X$ denote the Cayley 2-complex associated with the presentation
$\langle a,t \mid tat^{-1}=a^{\p} \rangle$ of $G$. Give the Cayley
graph $X^1=\Gamma(G,A)$ the path metric.
For a word $v$ over the alphabet $A$, let
$l(v)$ denote the length of the word $v$, and
let $l_\Gamma(v)$
denote the length of a geodesic (with respect to the
path metric on $\Gamma$) representative of the element of
$G$ represented by $v$.
Each element of $G$ has a
particularly simple, not necessarily geodesic, normal form
$t^{-m}a^jt^s$ with $m \geq 0$ and $s \geq 0$.
The combing we
construct will be based on this set of normal forms. The following
lemma, which characterizes a set of geodesics for elements of $G$
and relates them to the normal forms above, is a direct
consequence of Elder and Hermiller \cite[Prop.~2.3]{\murraysusan}.

\begin{lemma}\label{lem:bsgeod}
Let $g$ be an arbitrary element of $G$. Then $g$ is represented by
a geodesic word $w$ satisfying one of the following.
\begin{enumerate}

\item  $w = t^k a^{i_k} t^{-1} a^{i_{k-1}} t^{-1} \cdots t^{-1}
a^{i_{-m}} t^s$ with $0 \le s \le  m$, $0 < m$, $-m \le k$,
$|i_l| \le \h$ for $-m \le l \le k-1$, $|i_k| \le \h + 1$, and
either $1 \le |i_k|$ or
($k=-m$ and $s=0$).

\item  $w = t^{-m} a^{i_{-m}} t a^{i_{-m+1}} t \cdots t a^{i_k}
t^{s-m-k}$ with $0 \le m \le s$, $-m \le k$, $|i_l| \le \h$
for $-m \le l \le k-1$, $1 \le |i_k| \le \h + 1$, and either $1 \le |i_k|$ or $k=-m=0$.

\end{enumerate}
Moreover, in each case, $g$ also has a (not necessarily geodesic)
representative of the form $t^{-m}a^jt^s$ with $m \ge 0$, $s \ge
0$, and $j=i_{-m} + i_{-m+1} \p + \cdots + i_k \p^{k+m} \in \Z$.
\end{lemma}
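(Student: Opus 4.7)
The plan is to treat the lemma as a translation of \cite[Prop.~2.3]{\murraysusan} into the notation used here, together with a direct verification of the ``moreover'' clause via the defining relation.

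First, I would unpack the cited Elder--Hermiller result and match its classification of geodesic words to our two cases. That proposition gives normal forms for geodesics in $BS(1,\p)$ built from base-$\p$ digits $i_l$ with the interior bound $|i_l|\le \h$ and the extreme-digit bound $|i_l|\le \h+1$. The combinatorial reason for the cutoff $\h=\lfloor \p/2 \rfloor$ is that any interior digit with $|i_l|>\h$ admits a strictly shorter representative by ``carrying'' modulo $\p$: replacing $a^{i_l}$ by $a^{i_l-\p}\cdot (tat^{-1})$ (or a symmetric move) shifts a unit to the next digit at a net saving. No such carry is available beyond the extreme digit, which is why that single position is allowed the slightly wider bound $\h+1$. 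The two cases~(1) and~(2) correspond to the two possible shapes of the $t$-exponent path the geodesic traces in the Bass--Serre tree: one that first increases then decreases, and one that first decreases then increases. The crux of this step is lining up the sign conventions and the index ranges of $k$, $m$, $s$, and the $i_l$ with those of \cite{\murraysusan}.

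Second, I would verify the moreover clause by direct computation using the relator $tat^{-1}=a^{\p}$. In case~(1), starting from the left of the geodesic, I iteratively apply the identity $t\cdot a^i\cdot t^{-1}=a^{i\p}$ to absorb pairs consisting of a leading $t$ and the next interior $t^{-1}$ into the $a$-exponent between them. Whenever the supply of leading $t$'s is exhausted before all interior $t^{-1}$'s have been consumed (for instance when $k<0$, or when walking past the $a^{i_{-m}}$ block), I instead commute the growing $a$-block past the surviving $t^{-1}$'s using $a^c t^{-1}=t^{-1} a^{c\p}$. A Horner-style induction on $k+m$ then shows that the combined $a$-exponent telescopes to
\[
j \;=\; i_{-m} + i_{-m+1}\p + i_{-m+2}\p^2 + \cdots + i_k\,\p^{k+m},
\]
while the surviving $t$-letters collect as $t^{-m}$ on the left and $t^s$ on the right, giving $w=t^{-m}a^j t^s$ in $G$. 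Case~(2) is handled by the symmetric calculation, pushing the internal $t$ letters in the opposite direction and producing the same Horner expression for $j$.

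The only real obstacle here is bookkeeping: keeping the index ranges, sign conventions, and extreme-digit exceptions aligned between our statement and \cite[Prop.~2.3]{\murraysusan}, and checking that the two shapes in the cited proposition really are our cases~(1) and~(2) and not, for example, a finer subdivision that needs to be recombined. Once the notational match is confirmed, the moreover clause is a finite iterated application of $tat^{-1}=a^{\p}$ and is essentially mechanical.
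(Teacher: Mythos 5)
Your proposal matches the paper's treatment: the paper gives no written proof, stating only that the lemma ``is a direct consequence of Elder and Hermiller \cite[Prop.~2.3]{\murraysusan},'' which is exactly the citation-plus-translation you carry out. Your Horner-style verification of the ``moreover'' clause by iterated application of $tat^{-1}=a^{\p}$ is correct and is the routine computation the paper leaves implicit.
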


The next two lemmas show that lower and upper bounds on the
length of a geodesic representative of $t^{-m}a^jt^s$ in the
Cayley graph imply lower
and upper bounds, respectively, on the value of $|j|$.

\begin{lemma}\label{lem:levimpliesh}
If $0 \le m < n$, $0 \le s < n$, $\h + 2 < B$, and
$l_\Gamma(t^{-m} a^j t^s) > Bn$, then
$|j| > \p^{({\frac{1}{\h + 2}}B-2)n}$.
\end{lemma}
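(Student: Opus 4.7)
The plan is to apply Lemma~\ref{lem:bsgeod} to obtain a geodesic representative $w$ of $t^{-m}a^jt^s$ and then to squeeze $|j|$ between an upper bound on $l(w)$ coming from the digit decomposition of $j$ and the hypothesis $l_\Gamma(t^{-m}a^jt^s) > Bn$. First I would dispose of the trivial case: if $j=0$ then $t^{-m}a^jt^s = t^{s-m}$, so $l_\Gamma(t^{-m}a^jt^s) \le \max(m,s) < n < (\h+2)n < Bn$, a contradiction. Hence $j \neq 0$, and consequently in the geodesic word $w$ the coefficient $i_k$ on the highest power $p^{k+m}$ is nonzero.

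Set $K := k+m \ge 0$. Using the digit bounds $|i_l| \le \h$ for $-m \le l < k$ and $|i_k| \le \h+1$ from Lemma~\ref{lem:bsgeod}, one has $\sum_{l=-m}^{k}|i_l| \le \h K + (\h+1)$. Plugging this into the explicit length formulas from the two cases of Lemma~\ref{lem:bsgeod} produces an upper bound on $l(w)$ in each of four subcases. Two of these (Case~1 with $k<0$, and Case~2 with $s\ge m+k$) give $l(w) \le (\h+2)n - 1 < Bn$ using $m,s < n$ and $B > \h+2$, so they are excluded by the hypothesis. In the remaining two subcases (Case~1 with $k \ge 0$, where $s \le m$; and Case~2 with $s < m+k$, where $s \ge m$) a short calculation reducing $2k+m+s$ and $3m+2k-s$ to at most $2K$ yields $l(w) \le (\h+2)K + \h + 1$, and combining with $l(w) > Bn$ gives
\[
K \;>\; \frac{Bn - \h - 1}{\h + 2}.
\]

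The key lower bound on $|j|$ in terms of $K$ is the standard carry-propagation estimate: since $|i_k| \ge 1$ and $|i_l| \le \h$ for $l<k$,
\[
|j| \;\ge\; p^K - \h \cdot \frac{p^K-1}{p-1} \;\ge\; \frac{p-1-\h}{p-1}\, p^K.
\]
For $p \ge 3$ the constant $\frac{p-1-\h}{p-1}$ is bounded below by $\frac{1}{3}$ (the minimum is attained at $p=4$), so $|j| \ge p^K/3$.

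Finally, combining the two inequalities yields
\[
|j| \;>\; \frac{1}{3}\, p^{(Bn - \h - 1)/(\h+2)} \;=\; p^{(B/(\h+2)-2)n} \cdot \frac{1}{3}\, p^{(2(\h+2)n - \h - 1)/(\h+2)},
\]
and for $n \ge 1$ the factor $\frac{1}{3}\, p^{(2(\h+2)n - \h - 1)/(\h+2)}$ is at least $\frac{1}{3}\, p^{(\h+3)/(\h+2)}$, which strictly exceeds $1$ for every $p \ge 3$ (using $\h \ge 1$). The main technical point is the constant bookkeeping: the slack from the strict inequality $B > \h+2$ must absorb both the additive constant $\h + 1$ in the exponent and the multiplicative $1/3$ from the carry estimate, which is why the conclusion is phrased with exponent $\frac{B}{\h+2} - 2$ rather than the sharper $\frac{B-2}{\h+2}$ that one might naively aim for.
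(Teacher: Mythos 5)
Your proof is correct and uses the same ingredients as the paper's — the geodesic normal forms of Lemma~\ref{lem:bsgeod}, the case-by-case length formulas, and the base-$p$ carry estimate relating $|j|$ to $p^{k+m}$ — the only difference being that the paper argues by contraposition (deducing an upper bound on $k+m$ from an assumed bound on $|j|$ and then bounding $l(w)$), whereas you run the same chain of estimates in the forward direction. The constant bookkeeping in your final step checks out (the exponent gap $2n-\frac{h+1}{h+2}$ with $n\ge 1$ absorbs the factor $\frac{1}{3}$), so this is essentially the paper's proof in reverse.
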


\begin{proof}  We will prove the contrapositive;
suppose that $0 \le m < n$, $0 \le s < n$, $\h + 2 < B$, and
$|j| \le \p^{({\frac{1}{\h + 2}}B-2)n}$. Let $w$ be a word in one of
the forms (1)-(2) from Lemma \ref{lem:bsgeod} that is a geodesic
representative of the element of $G$ that is also represented by
$t^{-m} a^j t^s$.
As $w$ is a geodesic, it follows that $l_\Gamma(t^{-m} a^j t^s)$ is
simply the length $l(w)$ of the word $w$.

First note that if $i_k = 0$, then $j=0$ and either $s=0$ or $m=0$.  In
both of these instances, we have $l_\Gamma(t^{-m} a^j t^s) < n < B n$.

For the rest of the proof we suppose that $|i_k| \ge 1$. In both cases
(1)-(2), we have $|j| = |i_{-m} + i_{-m+1} \p + \cdots + i_k
\p^{k+m}| \le \p^{({\frac{1}{\h + 2}}B-2)n}$, and hence $|i_k|
\p^{k+m} - |\sum_{l=-m}^{k-1} i_l \p^{l+m}| \le
\p^{({\frac{1}{\h + 2}}B-2)n}$. Since each $|i_l| \le \h$ for $-m
\le l \le k-1$, then $|\sum_{l=-m}^{k-1} i_l \p^{l+m}| \le
\sum_{l=-m}^{k-1} \h \p^{l+m} = \h { \frac{\p^{k+m} - 1}{\p -
1}} < {\frac{2}{3}} \p^{k+m}$, where the
last inequality uses the hypothesis that $p \ge 3$. Plugging this into the previous
inequality, and using the fact that $|i_k| \ge 1$, gives
${\frac{1}{3}} \p^{k+m} \le |i_k| \p^{k+m} - {\frac{2}{3}}
\p^{k+m} < \p^{({\frac{1}{\h + 2}}B-2)n}$. Then
$\p^{k+m-({\frac{1}{\h + 2}}B-2)n} < 3$, and so $k+m -
({\frac{1}{\h + 2}}B-2)n \le 0$.  Since $0 \le m$, this gives $k \le
({\frac{1}{\h + 2}}B-2)n$.

If $w$ is of the form in (1) with $k > 0$,
then
\begin{eqnarray*}
l(w) & = & 2k + m + s + |i_{-m}| + \cdots + |i_k|
< 2\Big({\frac{1}{\h + 2}}B-2\Big)n + n + n + \h (k -1 + m) + \h + 1 \\
& < & 2\Big({\frac{1}{\h + 2}}B\Big)n  + \h \Big(\Big({\frac{1}{\h + 2}}B-2\Big)n + n\Big)
< Bn .
\end{eqnarray*}
If $w$ is of the form in (1) with $k \le 0$, then
\begin{eqnarray*}
l(w) & = & m + s + |i_{-m}| + \cdots + |i_k| < n + n + \h (m-1) + \h + 1 < (\h + 2)n + 1.
\end{eqnarray*}
For $w$ of the form (2) with $k>s-m$, we have
\begin{eqnarray*}
l(w) & = & 2m + k + (k-s+m) + |i_{-m}| + \cdots + |i_k| \\
& < & 2n + 2\Big({\frac{1}{\h + 2}}B-2\Big)n + \h(m+k-1) + \h + 1 \\
& \le & 2\Big({\frac{1}{\h + 2}}B\Big)n + \h\Big(n+\Big({\frac{1}{\h + 2}}B-2\Big)n\Big)
 < Bn .
\end{eqnarray*}
And finally, for $w$ in form (2) with $k \le s-m$, we have
\begin{eqnarray*}
l(w) & = & m + s + |i_{-m}| + \cdots + |i_k|
< n + n + \h(k-1+m) + \h + 1 < 2n+\h s + 1 \\
& < & 2n+\h n+1 = (\h + 2)n+1.
\end{eqnarray*}
Hence, in all possible cases, $l_\Gamma(t^{-m} a^j t^s) = l(w) < Bn+1$, and so
this nonnegative integer satisfies $l_\Gamma(t^{-m} a^j t^s) \le Bn$.
\end{proof}

\begin{lemma}\label{lem:himplieslev}
If $0 \le m < n$, $0 \le s < n$, $1 < E$, and $|j| > \p^{E n}$,
then $l_\Gamma(t^{-m} a^j t^s) > (E - 1) n$.
\end{lemma}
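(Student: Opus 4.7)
The plan is to prove the contrapositive: assume $l_\Gamma(t^{-m}a^j t^s) \le (E-1)n$ and deduce that $|j| \le \p^{En}$. By Lemma \ref{lem:bsgeod}, the element $t^{-m}a^jt^s$ has a geodesic representative $w$ of one of the two forms (1) or (2), and in either case $j=\sum_{l=-m}^{k} i_l\,\p^{l+m}$ with $|i_l|\le \h$ for $-m\le l\le k-1$ and $|i_k|\le \h+1$.

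Using the triangle inequality together with the elementary bound $\h+1\le \p-1$, which holds for all $\p\ge 3$ because $\h=\lfloor \p/2\rfloor$, one gets
\[
|j|\;\le\;(\h+1)\sum_{l=0}^{k+m}\p^{l}\;=\;(\h+1)\,\frac{\p^{k+m+1}-1}{\p-1}\;\le\;\p^{k+m+1}.
\]
Thus it suffices to show $k+m+1\le En$, i.e.\ $k+m\le En-1$.

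The remaining task is to bound $k+m$ in terms of $l(w)=l_\Gamma(t^{-m}a^jt^s)\le (E-1)n$, using the same four-way case split that appears in the proof of Lemma \ref{lem:levimpliesh}. In form (1) with $k\le 0$, and in form (2) with $k\le s-m$, the length formula $l(w)=m+s+\sum|i_l|$ gives $k+m\le m+s\le 2(n-1)$, and in fact $k+m\le n-1$ since $k\le 0$ or $k\le s-m$ respectively. In form (1) with $k>0$, the prefix $t^k$ alone contributes $k$ and the interior $t^{-1}$'s contribute $k+m$, so $l(w)\ge 2k+m$ and hence $k\le \tfrac{1}{2}(E-1)n$, giving $k+m\le \tfrac{1}{2}(E-1)n+(n-1)$. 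In form (2) with $k>s-m$, the $k+m$ interpolating $t$-letters between the $a$-blocks yield $l(w)\ge k+m$ directly, so $k+m\le (E-1)n$. Since $E>1$ and $n\ge 1$, each of these upper bounds is at most $En-1$. Combining with the earlier estimate gives $|j|\le \p^{k+m+1}\le \p^{En}$, which is the desired conclusion of the contrapositive.

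The main obstacle, as usual for these Baumslag--Solitar length computations, is simply keeping the constants straight across the four subcases of geodesic shape; once the inequality $(\h+1)/(\p-1)\le 1$ is in hand, the rest is bookkeeping strictly parallel to the proof of Lemma \ref{lem:levimpliesh}, and no new ideas are required.
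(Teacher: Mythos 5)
Your proof is correct and follows essentially the same route as the paper's: the same appeal to Lemma~\ref{lem:bsgeod}, the same comparison of $|j|$ with $\p^{k+m+1}$ (the paper gets $|j|<(\h+2)\p^{k+m}\le \p^{k+m+1}$ where you use $(\h+1)/(\p-1)\le 1$), and the same four-way case analysis of the geodesic length formulas; the only difference is that you argue the contrapositive, bounding $k+m$ from above by the length, where the paper argues directly, bounding $k$ from below by $(E-1)n$. All the inequalities in your subcases check out.
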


\begin{proof}
Suppose that $0 \le m < n$, $0 \le s < n$, $1 < E$, and $|j| >
\p^{E n}$. Let $w$ be a word in one of the forms (1)-(2) from
Lemma \ref{lem:bsgeod} that is a geodesic representative of the
element of $G$ also represented by $t^{-m} a^j t^s$.

In both cases, we have $\p^{E n} < |j| = |i_{-m} + i_{-m+1}
\p + \cdots + i_k \p^{k+m}|$, and in particular we must have $|i_k| \ge 1$.
Using the fact that
$|\sum_{l=-m}^{k-1} i_l \p^{l+m}| <  {\frac{2}{3}} \p^{k+m}$
(see the proof of Lemma \ref{lem:levimpliesh}) and the inequality
$|i_k| \le \h + 1$ yields $\p^{E n} < {\frac{2}{3}} \p^{k+m} +
|i_k| \p^{k+m} < (\h + 2) \p^{k+m}$. Since $p \ge 3$,
this gives $\p^{En-k-m}<\h + 2 \le p$, and
so $En-k-m \le 0$.  Then $(E-1)n < En-m \le k$.

Note that the inequality $(E-1)n < k$ implies that $0<k$.
We again consider the length of $w$ in each case.

If $w$ is of the
form in (1), then
\begin{eqnarray*}
l(w) & = & 2k + m + s + |i_{-m}| + \cdots + |i_k|
> 2(E-1)n + 0 + 0 + 1.
\end{eqnarray*}
For $w$ of the form (2) with $k>s-m$, we have
\begin{eqnarray*}
l(w) & = & 2m + k + (k-s+m) + |i_{-m}| + \cdots + |i_k|
> 0 + (E-1)n + 0 + 1.
\end{eqnarray*}
And finally, for $w$ in form (2) with $k \le s-m$, we have $k \le s$ and hence
\begin{eqnarray*}
l(w) & = & m + s + |i_{-m}| + \cdots + |i_k|
> 0 + k + (s-k) + 1 > (E-1)n+0+1.
\end{eqnarray*}
Thus in all possible cases we have $l_\Gamma(t^{-m}a^jt^s)=l(w)>(E-1)n$.
\end{proof}

The Cayley complex $X$ can be constructed using rectangles
homeomorphic to $[0,1] \times [0,1]$, with the top labeled $a$ and
oriented to the right, the bottom labeled $a^{\p}$ and also
oriented to the right, and the left and right sides labeled $t$
and oriented upward. Gluing these rectangles along commonly
labeled and oriented sides, the Cayley complex $X$ is homeomorphic
to the product $\R \times T$ of the real line with a tree $T$. The
projection maps $\pi_\R:X \ra \R$ and $\pi_T:X \ra T$ are
continuous, and we can write a point $x \in X$ uniquely as
$[\pi_\R(x),\pi_T(x)]$.

The vertices of $T$ are the projections via $\pi_T$ of the
vertices of $X$.  Two vertices of $X$ project to the same vertex of
$T$ if and only if there is a path in $X^1$ labeled by a power of $a$ between
the two vertices.  Each edge of $T$ can be considered as oriented
upward with a label $t$, the projection under $\pi_T$ of edges
labeled by $t$ in the Cayley complex.  Each vertex of $T$ is the
initial vertex for $\p$ edges and the terminal vertex for one edge.

The projection $\pi_\R$ maps a vertex $t^{-m}a^jt^s$ to the real
number $j \p^{-m}$.  The points on a vertical edge between vertices
$t^{-m}a^jt^s$ and $t^{-m}a^jt^{s+1}$  also all map under $\pi_\R$
to $j \p^{-m}$, and the projection $\pi_\R$ maps the horizontal edge from
$t^{-m}a^jt^s$ to $t^{-m}a^jt^sa$ homeomorphically to the interval
from $j \p^{-m}$ to $j \p^{-m} + \p^{-m+s}$.

On a rectangular ($[0,1] \times [0,1]$) 2-cell, the top left and
top right vertices have the form $[j\p^{-m},z]$ and
$[j\p^{-m}+\p^{-m+s},z]$, respectively. Two points
$x=[j\p^{-m},\pi_T(x)]$ and
$y=[j\p^{-m}+\p^{-m+s},\pi_T(y)]$ on the left and right sides
of this 2-cell, respectively, determine a horizontal line segment
if $\pi_T(x)=\pi_T(y)$ is a point on the unique edge in the tree
$T$ oriented toward the vertex $z$. The projection $\pi_R$ maps
this horizontal line segment homeomorphically to the interval from
$j\p^{-m}$ to $j\p^{-m}+\p^{-m+s}$ in $\R$, and the
projection $\pi_T$ is constant on this segment. Let $z'$ be the
initial vertex of the edge in $T$ whose terminus is $z$, and let
$r$ be any real number in the interval from $j\p^{-m}$ to
$j\p^{-m}+\p^{-m+s}$. The two points $[r,z']$ and
$[r,z]$ are on the bottom and top sides of this 2-cell,
respectively, and they determine a vertical line segment in the
2-cell which maps via $\pi_\R$ constantly to $r$, and which
maps via $\pi_T$ homeomorphically to the edge from $z'$ to $z$.

It will frequently be useful to move from points in the interiors
of 1-cells or 2-cells to vertices in the Cayley complex.  If $y$
is a vertex in $X$, let $\tilde y:=y$. If $y$ is in the interior
of a 1-cell in $X$ labeled $t$ directed upward, then let $\tilde
y$ be the initial vertex of that edge. If $y$ is in the interior
of a 1-cell in $X$ labeled $a$ directed right, then let $\tilde y$
be the endpoint of that edge whose image under $\pi_\R$ has the
maximum absolute value. Finally if $y$ is in the interior of a
2-cell, let $\tilde y$ be the bottom (left or right) corner of
that rectangular 2-cell whose image under $\pi_\R$ has the maximum
absolute value. In a 2-cell there are $p+3$ vertices, so the
difference in levels of vertices in that cell is at most \h + 2,
resulting in a bound on the difference between the level of the
2-cell and the level of any vertex in that cell, as well. Then in
all cases, we have $\tilde y \in X^0$ and $|\mathrm{lev}(\tilde y) -
\mathrm{lev}(y)| < \h + 3$.  We will call $\tilde y$ the {\it vertex associated
to} $y$.

More information on Cayley complexes for Baumslag-Solitar groups
can be found in \cite{\echlpt} or \cite{\murraysusan}.

Next, we apply the lemmas above to prove Theorem \ref{thm:bsp}.

\begin{proof}
We first define a 1-combing $\Psi:X^1 \times [0,1] \ra X$ as
follows.

Let $x$ be an arbitrary point in $X^1$.  Since $T$ is a tree,
there is a unique geodesic in $T$ from $\pi_T(\epsilon)$ to $\pi_T(x)$.
This geodesic first follows a (possibly empty) edge path in the
direction opposite to each edge orientation from $\pi_T(\epsilon)$ down
to a point $z(x)$ (which we will call the {\it nadir} of $x$), and
then follows a (possibly empty) edge path in the same direction as
each edge orientation up to $\pi_T(x)$. If $z(x) \neq \pi_T(x)$ so
that the upward portion is nonconstant, then the nadir $z(x)$ must
be a vertex of $T$. Let the path $p_x:[0,\frac{1}{3}] \ra T$
follow the geodesic from $\pi_T(\epsilon)$ to $z(x)$ with constant speed,
and let the path $q_x:[\frac{2}{3},1] \ra T$ follow the geodesic
from $z(x)$ to $\pi_T(x)$ with constant speed (with respect to the
path metric on $T$).

Define the path $\Psi:\{x\} \times [0,1] \ra X$ by
$\Psi(x,u)=[0,p_x(u)]$ for $u \in [0,\frac{1}{3}]$,
$\Psi(x,u)=[3(u-\frac{1}{3})\pi_\R(x),z(x)]$ for $u \in
[\frac{1}{3},\frac{2}{3}]$, and $\Psi(x,u)=[\pi_\R(x),q_x(u)]$ for
$u \in [\frac{2}{3},1]$. In the first third of the interval this
path
goes directly downward, in the second third it travels
horizontally, and in the last third it goes directly upward in the
Cayley complex $X$. Note that some of these three component paths
may be constant. We will refer to this path as the {\it DHU-path}
for $x$.

Continuity of the function $\Psi:X^1 \times [0,1] \ra X$ defined
by these DHU-paths follows from the continuity of the two
projection functions $\pi_\R$ and $\pi_T$.  For a vertex $x \in
X^0$ regarded as an element of $G$, the representative
$t^{-m}a^jt^s$ of $x$ from Lemma \ref{lem:bsgeod} satisfies
$z(x)=\pi_T(t^{-m})$, and so the path $\Psi:\{x\} \times [0,1] \ra
X^1$ follows the edge path labeled by the word $t^{-m}a^jt^s$ and
remains in the 1-skeleton of $X$. Hence $\Psi$ is a 1-combing,
which we will refer to as the {\it DHU-combing}.

In order to show that the DHU-combing satisfies a linear radial
tameness function, we will show that for the constants $B :=
4(\h + 2)$ and $C := (\h + 4)(B+1)$, whenever $x \in X^1$, $0 \le b < c \le 1$,
$0 \le q \in \Q$, $\mathrm{lev}(\Psi(x,b)) > Bq+C$, and $\mathrm{lev}(\Psi(x,c)) \le
q$, we have a contradiction.

To that end, fix a
point $x$ in $X^1$, $0 \le b < c \le 1$, and $0 \le q \in \Q$. Let
$v:=\Psi(x,b)$, $w:=\Psi(x,c)$, $\sigma:=\Psi(x,\frac{1}{3})$, and
$\tau:=\Psi(x,\frac{2}{3})$, and assume that $\mathrm{lev}(v) > Bq+C$ and
$\mathrm{lev}(w) \le q$.

Case I.  {\it Suppose that $w \in \Psi(\{x\} \times [0,\frac{1}{3}])$.}
Then both $v$ and $w$ are points on the downward portion of the
DHU-path for $x$, on
the infinite ray labeled $t^{-\infty}$ going down from $\epsilon$ in $X$.
Now $t^{-m}$ is a geodesic in the Cayley graph for all $m \ge 0$,
and so traveling along a downward path, the level is a
nondecreasing function.  Then we have $Bq+C < \mathrm{lev}(v) \le \mathrm{lev}(w)
\le q$.  Hence we obtain the required contradiction in this case.

Case II.  {\it Suppose that $w \in \Psi(\{x\} \times
(\frac{1}{3},\frac{2}{3}]) \setminus \{\sigma\}$.} In this case the
DHU-combing path for $x$ has a nontrivial horizontal component and
$w$ is in its image.

If $v \in \Psi(\{x\} \times [0,\frac{1}{3}))$, then let
$v':=\sigma$ and $b':=\frac{1}{3}$; otherwise $v \in \Psi(\{x\}
\times [\frac{1}{3},c))$ and we let $v'=v$ and $b'=b$. Again
applying the fact that the level is a nondecreasing function on a
downward path from the identity $\epsilon$, we also have that $v'=\Psi(x,b')$ with
$\frac{1}{3} \le b' < c$ and $Bq+C < \mathrm{lev}(v) \le \mathrm{lev}(v')$. Now the
points $v'$ and $w$ are both on the horizontal portion of the
DHU-path for $x$, satisfying $\pi_T(v')=\pi_T(w)=z(x)$ and
$|\pi_\R(v')| < |\pi_\R(w)|$.

The geodesic in $T$ from $\pi_T(\epsilon)$ to $\pi_T(x)$ may not have an
upward component in Case II, and so the nadir $z(x)$ may not be a
vertex of $T$.  This implies that the points $v'$ and $w$ may not
be in $X^1$. Let $\tilde {v'}$ and $\tilde w$ be the vertices
associated to $v'$ and $w$, respectively.  Since
$\pi_T(v')=\pi_T(w)$ is on the $t^{-\infty}$ ray in $T$,  the
associated vertices project to a vertex
$\pi_T(\tilde{v'})=\pi_T(\tilde w)=\pi_T(t^{-m})$ for some integer
$m \ge 0$ on this ray. The construction of the associated vertices
implies that $\tilde{v'}$ is represented by a word $t^{-m}a^i$ and
$\tilde w$ is represented by a word $t^{-m}a^j$ with $0 \le m$ and
$|i|\p^{-m} =  |\pi_\R(\tilde{v'})| \le  |\pi_\R(\tilde w)| =
|j|\p^{-m}$.

We also have $|\mathrm{lev}(\tilde w)-\mathrm{lev}(w)|<\h + 3$, so
$l_\Gamma(t^{-m}a^j) = l_\Gamma(\tilde w) =
\mathrm{lev}(\tilde w) < q + \h + 3 \le \lfloor q \rfloor + \h+4$.
Define $n:=\lfloor q \rfloor +\h+4$.  Then $l_\Gamma(t^{-m}a^j) < n$ and as
a consequence $0 \le m <  n$ as well. Similarly since
$|\mathrm{lev}(\tilde{v'})-\mathrm{lev}(v')|<\h + 3$, we have
\begin{eqnarray*}
l_\Gamma(t^{-m}a^i) & = & l_\Gamma(\tilde{v'}) = \mathrm{lev}(\tilde{v'})
> Bq+C-(\h + 3) \\
& = & B (\lfloor q \rfloor + \h+4) + B(q-\lfloor q \rfloor) -(\h+4)B + C-(\h + 3)
> Bn.
\end{eqnarray*}
Now $B>\h + 2$, and so we may apply
Lemma~\ref{lem:levimpliesh} to $t^{-m}a^i$, yielding the
inequality $|i|>\p^{(\frac{1}{\h + 2}B-2)n}$.

Combining the inequalities at the ends of the last two paragraphs
together with the value $B=4(\h + 2)$
gives $|j|>\p^{2n}$.  Lemma~\ref{lem:himplieslev}
applied to the word $t^{-m}a^j$ with $E=2$ says that
$l_\Gamma(t^{-m}a^j)>(E-1)n=n$. However, from the previous paragraph
we have $l_\Gamma(t^{-m}a^j)<n$, a contradiction.

Case III.  {\it Suppose that $w \in \Psi(\{x\} \times (\frac{2}{3},1])
\setminus \{\tau\}$.} In this case the DHU-combing path for $x$ has
a nontrivial upward component, and $w$ is in its image.

As in Case II, define $v':=\sigma$ and $b':=\frac{1}{3}$ if $v \in
\Psi(\{x\} \times [0,\frac{1}{3}))$, and define $v':=v$ and
$b':=b$ if $v \in \Psi(\{x\} \times [\frac{1}{3},c))$. Then
$v'=\Psi(x,b')$ with $\frac{1}{3} \le b' < c$,
 $Bq+C < \mathrm{lev}(v) \le \mathrm{lev}(v')$, and $v'$ is either on the
horizontal or the upward portion of the DHU-path for $x$.

The geodesic in $T$ from $\pi_T(1)$ to $\pi_T(x)$ must have an
upward component in case III, and hence the nadir $z(x)$ is a
vertex of $T$.  Then $z(x)=\pi_T(t^{-m})$ for some integer $0 \le
m$.

The DHU-path for $x$ travels from $v'=\Psi(x,b')$ to $w=\Psi(x,c)$
either via a nontrivial upward path, or else through a horizontal
and then nonconstant upward path.  The DHU-paths for $v'$ and $w$
are reparameterizations of the portion of the DHU-path for $x$
traveling from $\epsilon$ to each endpoint, and so they have the same
nadir $z(x)=z(v')=z(w)=\pi_T(t^{-m})$.  Moreover, we have
$|\pi_\R(v')| \le |\pi_\R(w)|$, and there is an upward path in $T$
from $\pi_T(v')$ to $\pi_T(w)$.

Although the horizontal portion of the DHU-path for $x$ must stay
in the 1-skeleton of $X$ (since it projects to $\pi_T(t^{-m})$),
the upward portion of the DHU-path for $x$ may leave $X^1$, and so
$v'$ and $w$ may not be in $X^1$.  Let $\tilde{v'}$ and $\tilde w$
be the vertices associated to $v'$ and $w$, respectively.
It follows from the definition of associated vertices that these
vertices satisfy $z(\tilde{v'})=z(\tilde w)=\pi_T(t^{-m})$,
$|\pi_\R(\tilde{v'})| \le |\pi_\R(\tilde w)|$, and there is there
is a (possibly empty) upward path in $T$ from $\pi_T(\tilde{v'})$
to $\pi_T(\tilde w)$.

Using Lemma~\ref{lem:bsgeod}, the vertex $\tilde w$ is represented
by a word $t^{-m}a^jt^s$ and the vertex
$\tilde{v'}$ is represented by a word $t^{-m} a^i t^r$.
The relations between these associated vertices above imply that
$0 \le |i| \le |j|$ and 	
$0 \le r \le s$.

The definition of associated vertices implies that
$|\mathrm{lev}(\tilde w)-\mathrm{lev}(w)|<\h + 3$, and hence
$l_\Gamma(t^{-m}a^jt^s) =
l_\Gamma(\tilde w) = \mathrm{lev}(\tilde w) <
q + \h + 3 \le \lfloor q \rfloor +\h+4 =: n$ as in
case II. As a consequence we have both $0 \le m <  n$ and $0
< s < n$ as well.

Also, as in case II, the inequality
$|\mathrm{lev}(\tilde{v'})-\mathrm{lev}(v')|<\h + 3$
implies that $l_\Gamma(t^{-m}a^it^r) =
l_\Gamma(\tilde{v'}) = \mathrm{lev}(\tilde{v'}) > Bn$.
Combining inequalities from above, we
also have $r<n$.

The rest of the proof in this case is similar to that in Case II.
In particular, Lemma~\ref{lem:levimpliesh} applied to
$t^{-m}a^it^r$ yields the inequality
$|i|>\p^{(\frac{1}{\h + 2}B-2)n}=\p^{2n}$. Combining this with
the inequality $|i| \le |j|$ from above yields
$|j|>\p^{2n}$.  In turn, using
Lemma~\ref{lem:himplieslev} with the word $t^{-m}a^jt^s$ and $E=2$ shows
that $l_\Gamma(t^{-m}a^jt^s)>n$, contradicting the
inequality $l_\Gamma(t^{-m}a^jt^s)<n$ found above.

Having achieved a contradiction in each case, this shows that the
DHU-combing for the group $BS(1,\p)$ and generating set $\{a,t\}^{\pm
1}$ satisfies a radial tameness function $\rho:\Q \ra {\mathbb
R}_+$ for the linear function $\rho(q)=Bq+C$ with the constants
$B = 4(\h + 2)$ and $C = (\h + 4)(B+1)$.
\end{proof}


\section{Coefficients in linear tame combings}
\label{sec:bscoeff1}

In this section we show that the linear coefficient for a linear
tame combing can be bounded away from 1 for a specific generating set.

\begin{theorem}\label{thm:bscoeff1}
For every natural number $p \ge 8$,
the group $G = BS(1,\p)=\langle a,t \mid tat^{-1}=a^{\p} \rangle$
with the generating set $A = \{a^{\pm 1},t^{\pm 1}\}$ does not
admit a 1-combing with radial tameness function of the form
$\rho(q)=q+C$ for any constant $C$.
\end{theorem}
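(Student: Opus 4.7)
The plan is to argue by contradiction. Suppose $\Psi\colon X^1\times[0,1]\to X$ is a 1-combing satisfying $TC_\rho$ with $\rho(q)=q+C$. Setting $M(x,s):=\sup_{0\le u\le s}\mathrm{lev}(\Psi(x,u))$, this hypothesis is equivalent to
\[
\mathrm{lev}(\Psi(x,s))\ \ge\ M(x,s)-C\qquad\text{for all }x\in X^1,\ s\in[0,1];
\]
that is, every combing path is \emph{$C$-almost-monotone} in level: once the running maximum reaches a value $L$, the level may dip by at most $C$ below $L$ thereafter.

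Next I would exhibit a sequence of adjacent vertex pairs $(u_n,v_n)\in X^0$ where this quasi-monotonicity is incompatible with the $\R\times T$ structure of $BS(1,p)$. A natural candidate is $u_n=t^{-n}a^{p-1}t^n$ together with $v_n=u_n\cdot a$: both lie at tree height $\pi_T(\epsilon)$, but their horizontal coordinates $\pi_\R(u_n)=(p-1)p^{-n}$ and $\pi_\R(v_n)=1+(p-1)p^{-n}$ have denominator $p^n$ in lowest terms, so every edge path in $X^1$ from $\epsilon$ to either vertex must descend to tree depth at least $n$ along the $t^{-\infty}$-ray. Combining the geodesic classification of Lemma~\ref{lem:bsgeod} with the growth estimates of Lemmas~\ref{lem:levimpliesh} and \ref{lem:himplieslev}, and using that $p\ge 8$ forces the narrow coefficient range $|i_l|\le h=\lfloor p/2\rfloor\ge 4$ in the normal form, I would establish the quantitative statement that any two $C$-almost-monotone edge paths in $X^1$ from $\epsilon$ to $u_n$ and to $v_n$ cannot be joined by an edge path in $X^1$ above level $l_A(u_n)-C-1$ whose combinatorial length remains bounded as $n\to\infty$.

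The contradiction would then come from continuity of $\Psi$. The combing disk $\Psi(e_n\times[0,1])$ for the edge $e_n$ joining $u_n$ and $v_n$ is a continuous map of a square into $X$, and the slicewise $C$-almost-monotone property shows that its super-level set $\{(x,s):\mathrm{lev}(\Psi(x,s))>l_A(u_n)-C-1\}$ is a topological sub-disk whose image connects $u_n$ to $v_n$ in $X$ above that level. Approximating this sub-disk by an edge path in $X^1$ (using that each 2-cell has at most $p+3$ boundary vertices whose levels vary by at most $h+2$) yields a connecting edge path in $X^1$ above level $l_A(u_n)-C-1$ whose length depends only on $C$ and $p$, contradicting the lower bound from the previous step once $n$ is sufficiently large. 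The main obstacle is establishing this explicit lower bound: it requires upgrading the geodesic analysis of Lemma~\ref{lem:bsgeod} from geodesics to $C$-almost-monotone quasi-geodesics, and carefully exploiting $p\ge 8$ (hence $h\ge 4$) to rule out horizontal shortcuts near the top of the complex that would otherwise allow the two combing paths to be connected cheaply above level $l_A(u_n)-C-1$.
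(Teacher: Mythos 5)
Your reformulation of $TC_{q+C}$ as $C$-almost-monotonicity of the combing paths is correct, but the plan has two genuine gaps, and the second is fatal to the overall strategy. First, the quantitative lower bound at the heart of your argument --- that the combing paths to $u_n$ and $v_n$ cannot be joined above level $l_A(u_n)-C-1$ by an edge path of bounded length --- is exactly the content you defer, and it is the hard part. Second, and more seriously, your final step asserts that the high-level portion of the disk $\Psi(e_n\times[0,1])$ can be approximated by a connecting edge path in $X^1$ ``whose length depends only on $C$ and $p$.'' Nothing in the tame combing hypothesis controls the combinatorial size of the image of a combing disk at a given level; it only constrains how levels along combing paths may decrease after increasing. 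A bounded-length connection between two combing paths inside a high-level region is precisely an almost-convexity-type conclusion, and the whole point of this paper is that such conclusions do \emph{not} follow from radial tameness bounds (these groups have linear tame combings yet fail even minimal almost convexity). What the super-level set of the disk actually gives you is a \emph{connected} subset of $X$ joining $u_n$ to $v_n$ at high level --- and even your claim that this set is a topological sub-disk is unjustified --- not a short path. So the intended contradiction does not materialize.

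The paper's proof runs the argument in the opposite direction: instead of showing that high-level connections must be long, it shows that a filling of a low-level loop must reach a high level. Concretely, take the loop $Y$ labeled by $t^{C}at^{-C}at^{C}a^{-1}t^{-C}a^{-1}$, all of whose points have level at most $2C+2$; the tameness hypothesis forces every point of $\Psi(Y\times[0,1])$ to have level at most $\rho(2C+2)=3C+2$. Since $X\cong\R\times T$ is aspherical, this image must contain the entire rectangle bounded by $t^{C}at^{-C}a^{-\p^{C}}$, and in particular the vertex $g=a^{(\h-2)(\p^{C}-1)/(\p-1)}$, which is shown via Lemma~\ref{lem:bsgeod} to have geodesic length $\h C-2$. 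For $p\ge 8$ one has $\h\ge 4$, so $\h C-2\le 3C+2$ forces $C\le 4$, contradicting the normalization $C>4$. If you want to salvage your approach, the bounded-length approximation step would have to be replaced by a topological separation argument showing the connected high-level set must pass through a specific far-away region --- at which point you are essentially rediscovering the filling-disk argument.
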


\begin{proof}
Let $p \ge 8$ and let $X$ be the Cayley complex of
the presentation $\langle a,t \mid tat^{-1}=a^{\p} \rangle$,
described in Section~\ref{sec:bslinear}.
Suppose to the contrary that $\Psi:X^1 \times [0,1] \ra X$ is a 1-combing with
radial tameness function $\rho(q)=q+C$. Replacing $C$ by any
larger constant results in another radial tameness function
satisfied by the 1-combing $\Psi$, so we may assume that $C$ is a
natural number larger than four.

Consider the word $t^{C} a t^{-C} a t^{C} a^{-1} t^{-C} a^{-1}$.
Since $t^{C} a t^{-C} = a^{\p^{C}}$ in the group, this word
labels a loop in the Cayley graph $X^1$ based at the vertex
corresponding to the identity $\epsilon$ of $G$. Let $Y \subset X^1$ be
the subcomplex  of points on the vertices and edges along this
loop.  The restriction $\Psi:Y \times [0,1] \rightarrow X$ of the
1-combing then defines a homotopy from the identity vertex to the
loop $Y$, and so the image $\Psi(Y \times [0,1])$ is the image of
a disk filling in the loop $Y$.

Since the Cayley complex $X$ is the product of the real line $\R$
with the tree $T$ (described in Section~\ref{sec:bslinear}), this
complex is aspherical. Then the image set $\Psi(Y \times [0,1])$
must include all of the points in the rectangle of points $z \in
X$ with projections $0 \le \pi_R(z) \le \p^{C}$ and $\pi_T(z)$
on the geodesic in $T$ from $\pi_T(1)$ to $\pi_T(t^{C})$; that is,
the rectangle in $X$ bounded by the loop labeled $t^{C} a t^{-C}
a^{-\p^{C}}$ based at $1$, including this boundary loop. (The
image $\Psi(Y \times [0,1])$ must also contain all of the points
in the rectangle of $X$ bounded by the loop $t^{C} a t^{-C}
a^{-\p^{C}}$ based at $a$.) In particular, the vertex
corresponding to the element $g \in G$ represented by the word
$a^{(\h-2)\frac{\p^{C}-1}{\p-1}}$, where $\h=\lfloor \frac{p}{2} \rfloor$
as before, is in $\Psi(Y \times [0,1])$. We
obtain two estimates for $l_\Gamma(g)$ which, taken together, contradict
our assumption that $C>4$.

First, we observe that the points in the set $Y$ all lie on the
(geodesic) paths $t^{C} a t^{-C} a $ or $a t^{C} a t^{-C}$
starting at $1$, and so the levels of all of the points in $Y$ are
at most $2C+2$. Then the level of every point in the image set
$\Psi(Y \times [0,1])$ must be at most $\rho(2C+2)=3C+2$.  Hence
$l_\Gamma(g) =\mathrm{lev}(g) \le 3C+2$.

On the other hand, note that $(\h-2)\frac{\p^{C}-1}{\p-1} =
(\h-2)+(\h-2)\p+\cdots+(\h-2)\p^{C-1}$. Thus the element $g =_G
a^{(\h-2)\frac{\p^{C}-1}{\p-1}}$ of $G=BS(1,\p)$ is also represented
by the word $v = (a^{(\h-2)}t)^{C-1}a^{(\h-2)}t^{-(C-1)}$. We claim that
the word $v$ is a geodesic.  First note that since $g$ is a
nontrivial power of the generator $a$, we have $m=s=0$ in
Lemma~\ref{lem:bsgeod}, and the geodesic word $w$ representing $g$
provided by the lemma is in the form (2), $w = a^{i_0} t a^{i_1} t
\cdots t a^{i_k} t^{-k}$ with $0 \le k$, $|i_l| \le \h$ for $0 \le
l \le k-1$, and $1 \le |i_k| \le \h+1$. We will show that in fact
the words $v$ and $w$ are the same. So far we have $w =_G v$;
that is, $a^{i_0} t a^{i_1} t \cdots t a^{i_k} t^{-k} =_G
(a^{(\h-2)}t)^{C-1}a^{(\h-2)}t^{-(C-1)} $, and hence $i_{0} + i_{1} \p +
\cdots + i_k \p^{k} = (\h-2)+(\h-2)\p + \cdots (\h-2)\p^{C-1}$.
If $k \ge C$, then
\begin{eqnarray*}
|i_k|\p^k & \le &
 \Big|\sum_{l=0}^{C-1} ((\h-2)-i_l)\p^l \Big| + \Big|\sum_{l=C}^{k-1} -i_l\p^l\Big|
 \le  \sum_{l=0}^{C-1} (\p-2)\p^l + \sum_{l=C}^{k-1} \h\p^l \\
& < & (\p-2) \frac{\p^{k}-1}{\p-1} < \p^k.
\end{eqnarray*}
Since $1 \le |i_k|$, this shows that we must have $k \le C-1$. If
$k \le C-2$, then
\begin{eqnarray*}
(\h-2)\p^{C-1} & \le &
 \Big|\sum_{l=0}^{k} (i_l-(\h-2))\p^l\Big| + \Big|\sum_{l=k+1}^{C-2} -(\h-2)\p^l\Big|
 \le \sum_{l=0}^{k} (\p-2)\p^l + \sum_{l=k+1}^{C-2} (\h-2)\p^l \\
& < & (\p-2) \frac{\p^{C-1}-1}{\p-1} < \p^{C-1},
\end{eqnarray*}
again resulting in a contradiction. Hence $k = C-1$.  Subtracting
once more, we get
\begin{eqnarray*}
|i_{C-1}-(\h-2)|\p^{C-1} & \le &
 \Big|\sum_{l=0}^{C-2} ((\h-2)-i_l)\p^l\Big|
\le  \sum_{l=0}^{C-2} (\p-2)\p^l < \p^{C-1},
\end{eqnarray*}
and so $i_{C-1}=\h-2$.  Using induction, then $i_l=\h-2$ for all $0
\le l \le C-1$.  Hence $w$ and $v$ are the same word, and the word
$v$ is a geodesic.

This gives us another way to compute the word length over $A$ of $g$, since
$v$ is a geodesic representative of $g$, and so
$l_\Gamma(g)=l(v)=((\h-2)+1)(C-1)+(\h-2)+C-1=\h C-2$. Earlier in this proof we had
$l_\Gamma(g) \le 3C+2$, which gives $\h C-2 \le 3C+2$.  The hypothesis
that $p \ge 8$ gives $\h \ge 4$, and so we have $C \le
\frac{4}{\h-3} \le 4$, contradicting our choice of $C$.
\end{proof}

\bibliographystyle{plain}

\begin{thebibliography}{10}

\bibitem{MR2139683}
James Belk and Kai-Uwe Bux.
\newblock Thompson's group {$F$} is maximally nonconvex.
\newblock In {\em Geometric methods in group theory}, volume 372 of {\em
  Contemp. Math.}, pages 131--146. Amer. Math. Soc., Providence, RI, 2005.

\bibitem{MR752825}
Kenneth~S. Brown and Ross Geoghegan.
\newblock An infinite-dimensional torsion-free {${\rm FP}\sb{\infty }$} group.
\newblock {\em Invent. Math.}, 77(2):367--381, 1984.

\bibitem{MR1426438}
J.~W. Cannon, W.~J. Floyd, and W.~R. Parry.
\newblock Introductory notes on {R}ichard {T}hompson's groups.
\newblock {\em Enseign. Math. (2)}, 42(3-4):215--256, 1996.

\bibitem{MR877210}
James~W. Cannon.
\newblock Almost convex groups.
\newblock {\em Geom. Dedicata}, 22(2):197--210, 1987.

\bibitem{MR2016653}
Sean Cleary and Jennifer Taback.
\newblock Thompson's group {$F$} is not almost convex.
\newblock {\em J. Algebra}, 270(1):133--149, 2003.

\bibitem{MR2052598}
Sean Cleary and Jennifer Taback.
\newblock Combinatorial properties of {T}hompson's group {$F$}.
\newblock {\em Trans. Amer. Math. Soc.}, 356(7):2825--2849 (electronic), 2004.

\bibitem{MR2126733}
Murray Elder and Susan Hermiller.
\newblock Minimal almost convexity.
\newblock {\em J. Group Theory}, 8(2):239--266, 2005.

\bibitem{MR1161694}
David B.~A. Epstein, James~W. Cannon, Derek~F. Holt, Silvio V.~F. Levy,
  Michael~S. Paterson, and William~P. Thurston.
\newblock {\em Word processing in groups}.
\newblock Jones and Bartlett Publishers, Boston, MA, 1992.

\bibitem{MR1998934}
S.~Blake Fordham.
\newblock Minimal length elements of {T}hompson's group {$F$}.
\newblock {\em Geom. Dedicata}, 99:179--220, 2003.

\bibitem{MR2207020}
V.~S. Guba.
\newblock The {D}ehn function of {R}ichard {T}hompson's group {$F$} is
  quadratic.
\newblock {\em Invent. Math.}, 163(2):313--342, 2006.

\bibitem{MR1804409}
Susan Hermiller and John Meier.
\newblock Measuring the tameness of almost convex groups.
\newblock {\em Trans. Amer. Math. Soc.}, 353(3):943--962 (electronic), 2001.

\bibitem{MR2596882}
Matthew Horak, Melanie Stein, and Jennifer Taback.
\newblock Computing word length in alternate presentations of {T}hompson's
  group {$F$}.
\newblock {\em Internat. J. Algebra Comput.}, 19(8):963--997, 2009.

\bibitem{MR1879521}
Ilya Kapovich.
\newblock A note on the {P}o\'enaru condition.
\newblock {\em J. Group Theory}, 5(1):119--127, 2002.

\bibitem{MR1390045}
Michael~L. Mihalik and Steven~T. Tschantz.
\newblock Tame combings of groups.
\newblock {\em Trans. Amer. Math. Soc.}, 349(10):4251--4264, 1997.

\bibitem{MR1644415}
Charles~F. Miller, III and Michael Shapiro.
\newblock Solvable {B}aumslag-{S}olitar groups are not almost convex.
\newblock {\em Geom. Dedicata}, 72(2):123--127, 1998.

\bibitem{MR1152227}
V.~Po{\'e}naru.
\newblock Almost convex groups, {L}ipschitz combing, and {$\pi\sp \infty\sb 1$}
  for universal covering spaces of closed {$3$}-manifolds.
\newblock {\em J. Differential Geom.}, 35(1):103--130, 1992.

\bibitem{MR1259521}
V.~Po{\'e}naru.
\newblock Geometry ``\`a la {G}romov'' for the fundamental group of a closed
  {$3$}-manifold {$M^3$} and the simple connectivity at infinity of {$\tilde
  M^3$}.
\newblock {\em Topology}, 33(1):181--196, 1994.

\bibitem{MR1931373}
Tim~R. Riley.
\newblock The geometry of groups satisfying weak almost-convexity or weak
  geodesic-combability conditions.
\newblock {\em J. Group Theory}, 5(4):513--525, 2002.

\bibitem{MR1237823}
Carsten Thiel.
\newblock {\em Zur fast-{K}onvexit\"at einiger nilpotenter {G}ruppen}.
\newblock Bonner Mathematische Schriften [Bonn Mathematical Publications], 234.
  Universit\"at Bonn Mathematisches Institut, Bonn, 1992.
\newblock Dissertation, Rheinische Friedrich-Wilhelms-Universit{\"a}t Bonn,
  Bonn, 1991.

\end{thebibliography}

\end{document}